\numberwithin{equation}{section} %%% Equations numbered by section. If you don't want it, please delete it.
\begin{document}

%%%%%%%%%%%%%%%%
\def\fontindex{\arabic}
\def\fontindexeq{\arabic}
\def\fonttitre{\textsf}
\newcounter{thh}
\newtheorem{thm}[thh]{\fonttitre{Theorem}}
\newtheorem{question}{\fonttitre{Question}}
\newtheorem{proposition}[thh]{\fonttitre{Proposition}}
\newtheorem*{pro*}{\fonttitre{Proposition}}
\newtheorem{corollary}[thh]{\fonttitre{Corollary}}
\newtheorem*{coro*}{\fonttitre{Corollary}}
\newtheorem{lemma}[thh]{\fonttitre{Lemma}}
\theoremstyle{definition} \newtheorem{remark}[thh]{\fonttitre{Remark}}
\newtheorem*{rems}{\fonttitre{Remarks}}
\newenvironment{com}{\par}{}
\newtheorem{defi}[thh]{\fonttitre{Definition}}
\newtheorem*{defi*}{\fonttitre{Définition}}
\newtheorem{conj}{\fonttitre{Conjecture}}
\newtheorem*{exes}{\fonttitre{Examples}}
\newtheorem*{exe}{\fonttitre{Example}}
\newtheorem{nota}[thh]{\fonttitre{Notation}}
\newtheorem*{nota*}{\fonttitre{Notation}}
\newenvironment{prf}{\begin{proof}}{\end{proof}}
%%
%%%%%%%%%%%%%%%%
\renewcommand{\thethh}{\arabic{section}.\fontindex{thh}}
\renewcommand{\theequation}{\arabic{section}.\fontindex{equation}}
%%%%%%%%%%%%%%%%
\def\vsp{\vspace{1mm}}
\def\th#1{\vspace{1mm}\noindent{\bf #1}\quad}
\def\proof{\vspace{1mm}\noindent{\it Proof}\quad}
\def\no{\nonumber}
\newenvironment{prof}[1][Proof]{\noindent\textit{#1}\quad }
{\hfill $\Box$\vspace{0.7mm}}
\def\q{\quad} \def\qq{\qquad}
\allowdisplaybreaks[4]
%%%%%%%%%%%%%%%%%%%%%%%%%%%%%%%%%%%%%%%%%%%%%%%%%%%%%%%%%%%%%%%%%%%%%%%%%%%%%%%%%%%%%%%%%%%%%%%
%%-------------------       Beginning of  Author's Definitions       -------------------%%
%%                     Note: You may add your own definitions here.

%%%%%%%% packages for the tabl of notation
%%\usepackage{tabularx}
%\usepackage{booktabs}
%\usepackage[labelfont=bf,format=plain,justification=raggedright,singlelinecheck=false]{caption}

%%%%%%%% nomenclature
%\usepackage{nomencl}
% HOW TO RUN IT
% pdflatex WF-with-nomen-final-arxiv.tex
% makeindex WF-with-nomen-final-arxiv.nlo -s nomencl.ist -o WF-with-nomen-final-arxiv.nls
% pdflatex WF-amse-with-nomen-final-arxiv.tex

\setlength{\nomitemsep}{-\parsep}
\renewcommand{\nomname}{List of symbols}
\makenomenclature

\renewcommand\nomgroup[1]{%
  \item[\bfseries
  \ifstrequal{#1}{A}{Function spaces}{%
  \ifstrequal{#1}{B}{Dual pairs}{%
  \ifstrequal{#1}{C}{Orbital integrals and their limits}{%
  \ifstrequal{#1}{D}{An integral over the slice through a nilpotent element}{%
  \ifstrequal{#1}{E}{Other symbols}{%
  }}}}}%
]}
%%%%%%%%%%%%%%%%%%%%
%% Some Greek letters for the Lie algebras, ....
\def\Chc{Chc}
\def\chc{\mathop{\hbox{\rm chc}}\nolimits}
\def\ch{\mathop{\hbox{\rm ch}}\nolimits}
\def\n{\mathfrak n}
\def\g{\mathfrak g}
\def\z{\mathfrak z}
\def\y{\mathrm y}
\def\h{\mathfrak h}
\def\sp{\mathfrak {sp}}
\def\sl{\mathfrak {sl}}
\def\o{\mathfrak o}

\def\gl{\mathfrak gl}
\def\u{\mathfrak u}
\def\k{\mathfrak k}
\def\p{\mathfrak p}
\def\q{\mathfrak q}

\def\R{\mathbb{R}}
\def\C{\mathbb{C}}
\def\Ze{\mathbb{Z}}
\def\Ha{\mathbb{H}}
\def\Ca{\mathcal{C}}
\def\m{\mathfrak m}
\def\b{\mathfrak b}
\def\l{\mathfrak l}
\def\z{\mathfrak z}
\def\c{\mathfrak c}
\def\ta{\mathfrak t}
\def\so{\mathfrak s_{\overline 0}}
\def\ss1{\mathfrak s_{\overline 1}}
\def\hso{\mathfrak h_{\overline 0}}
\def\hs1{\mathfrak h_{\overline 1}}

\def\Br{\mathrm{B}}
\def\Ar{\mathrm{A}}
\def\Pg{\mathrm{P}}
\def\supp{\mathrm{supp}}
\def\Op{\mathrm{Op}}
\def\Ker{\mathrm{Ker}}

\def\deta{\mathfrak{d}}
\def\fbot{\underline{\bot}}

\def\Io{\mathcal{I}}

\def\npb{\text{\rm npb}}
%% Groups
\def\G{\mathrm{G}}
\def\N{\mathrm{N}}
\def\Zg{\mathrm{Z}}
\def\E{\mathrm{E}}
\def\Qg{\mathrm{Q}}
\def\SOg{\mathrm{S}\mathrm{O}}
\def\K{\mathrm{K}}
\def\H{\mathrm{H}}
\def\M{\mathrm{M}}
\def\Zg{\mathrm{Z}}
\def\Sg{\mathrm{S}}
\def\Spin{\mathrm{Spin}}

\def\L{\mathrm{L}}
\def\Bbb{\mathbb}
\def\Ni{\mathrm{N}}
\def\N{\mathrm{N}}
\def\A{\mathrm{A}}
\def\H{\mathrm{H}}
\def\T{\mathrm{T}}
\def\GL{\mathrm{GL}}
\def\SL{\mathrm{SL}}
\def\SO{\mathrm{SO}}
\def\Sp{\mathrm{Sp}}
\def\Mp{\mathrm{Mp}}
\def\Og{\mathrm{O}}
\def\Ug{\mathrm{U}}
\def\Se{\mathcal{S}}
\def\Eg{\mathrm{E}}
\def\Fg{\mathrm{F}}
\def\id{\mathrm{id}}
\def\B{\mathrm{B}}
\def\Mg{\mathrm{M}}

%% anticommutants
\newcommand{\anticomm}[2]{\null^{#1}#2}
\newcommand{\danticomm}[2]{\null^{\anticomm{#1}{#2}}#2}

\def \t{\tilde}
\def \wt{\widetilde}
\newcommand{\reg}[1]{ {#1}^{reg}}
\newcommand{\preg}[1]{ {#1}^{\prime reg}}
\newcommand{\pt}[1]{ \t{#1}^{\prime}}
\newcommand{\pregt}[1]{ \t{#1}^{\prime reg}}
\newcommand{\regt}[1]{ \t{#1}^{reg}}
\newcommand{\saut}[1]{ \langle #1 \rangle }

\newcommand{\un}[1]{\underline{#1}}
\def\ii{\mathsf{i}}
\def\CSC{\operatorname{CSC}}
\newcommand{\OP}{\mathop{\rm{OP}}}

%% Vector spaces
\def\W{\mathsf{W}}
\def\Wv{\mathrm{W}}
\def\W+{\mathrm{W}_{\BB C}}
\def\Vv{\mathrm{V}}
\def\Uv{\mathrm{U}}
\def\V{\mathsf{V}}
\def\U{\mathsf{U}}
\def\X{\mathsf{X}}
\def\Y{\mathsf{Y}}
\def\Xv{\mathrm{X}}
\def\Yv{\mathrm{Y}}
\def\Sy{\mathsf{S}}
\def\symm{\mathsf{s}}

%% The general letter for a field
\def\Dc{\mathbb {D}}
\def\Zb{\mathbb {Z}}
%% Smooth functions with compact support and associated distributions
\def\D{\mathcal{D}}

%% A matrix which induces a complex structure
\def\J{\mathcal{J}}
\def\Id{\mathcal{I}}

%% Harish Chandra Integral
\def\Hc{\mathcal {H}}

\def\We{\mathcal{W}}
%% Some useful symbols
\def\End{\mathop{\hbox{\rm End}}\nolimits}
\def\diag{\mathop{\hbox{\rm diag}}\nolimits}
\def\det{\mathop{\hbox{\rm det}}\nolimits}
\def\ad{\mathop{\hbox{\rm ad}}\nolimits}
\def\Ad{\mathop{\hbox{\rm Ad}}\nolimits}
\def\Hom{\mathop{\hbox{\rm Hom}}\nolimits}
\def\Re{\mathop{\hbox{\rm Re}}\nolimits}
\def\Im{\mathop{\hbox{\rm Im}}\nolimits}
\def\Lie{\mathop{\hbox{\rm Lie}}\nolimits}
\def\tr{\mathop{\hbox{\rm tr}}\nolimits}
\def\WF{\mathop{\hbox{\rm WF}}\nolimits}
\def\sign{\mathop{\hbox{\rm sign}}\nolimits}
\def\sgn{\mathop{\hbox{\rm sgn}}\nolimits}
\def\Diff{\mathop{\hbox{\rm Diff}}\nolimits}
\def\Dp{\mathop{\hbox{\rm D}}\nolimits}
\def\vol{\mathop{\hbox{\rm vol}}\nolimits}
\newcommand\diesis[1]{#1^\sharp}

\def\gr{\mathop{\hbox{\rm gr}}\nolimits}
\def\lim{\mathop{\hbox{\rm lim}}\nolimits}

\def\sym{\mathop{\hbox{\rm Sym}}\nolimits}
\newcommand\inner[2]{\langle #1,#2\rangle}

\def\Res{\mathop{\hbox{\rm Res}}\nolimits}

\def\Ind{\mathop{\hbox{\rm Ind}}\nolimits}
\def\supp{\mathop{\hbox{\rm supp}}\nolimits}
\def\Vect{\mathop{\hbox{\rm Vect}}\nolimits}

\def\ker{\mathop{\hbox{\rm ker}}\nolimits}
\def\im{\mathop{\hbox{\rm im}}\nolimits}
%% A slice named U
\def\U{\mathcal{U}}
\def\Go{\mathcal{G}}
\def\Zo{\mathcal{Z}}
\def\Vo{\mathcal{V}}
\def\Z{\mathcal{Z}}
\def\I{\mathcal{I}}
\def\P{\mathcal{P}}
\def\HP{\mathcal{H}\mathcal{P}}
\def\Oo{\mathcal{O}}
\def\Symm{\mathcal{S}}

%% The Schwartz space
\def\Ss{\mathcal{S}}
\def\Ms{\mathcal{M}}
\def\Hs{\mathcal{H}}
\def\SHs{\mathcal{SH}}
\def\Ps{\mathcal{P}}

%%%%%%%%%%%%%%%%%
\def\nn{\nonumber}

%%-------------------         the end of  Author's Definitions           -------------------%%

%\author{McKee, M., Pasquale, A. and Przebinda, T.}                             
%%%  appear on the head of even pages  %%%

%%% Running Title, appear on the head of odd pages  %%%

\title[Dual pairs with one member compact]{The wave front set correspondence\\ for dual pairs with one member compact}

\author{M. McKee}
\address{Department of Mathematics, University of Iowa, 
Iowa City, IA 52242, USA}
\email{mark-mckee@uiowa.edu}
\author{A. Pasquale}
\address{Université de Lorraine, CNRS, IECL, F-57000 Metz, France}
\email{angela.pasquale@univ-lorraine.fr}

\author{T. Przebinda}
\address{Department of Mathematics, University of Oklahoma, Norman, OK 73019, USA}
\email{przebinda@gmail.com}

\thanks{
The second author is grateful to the University of Oklahoma for hospitality and financial support. The third author gratefully acknowledges hospitality and financial support from the Universit\'e de Lorraine and partial support from NSA (Grant No.  H98230-13-1-0205)  
and NSF (Grant No. DMS-2225892).}                 
           
\begin{abstract}
Let $\Wv$ be a real symplectic space and $(\G,\G')$ an irreducible dual pair in $\Sp(\Wv)$, in the sense of Howe, with $\G$ compact. 
Let $\wt\G$ be the preimage of $\G$ in the metaplectic group $\wt{\Sp}(\Wv)$. Given an irreducible unitary representation $\Pi$ of $\wt\G$ that occurs in the restriction of the Weil representation to $\wt\G$, let $\Theta_\Pi$ denote its character. We prove that, for a suitable embedding $T$ of $\wt{\Sp}(\Wv)$ in the space of tempered distributions on $\Wv$, the distribution $T(\check\Theta_\Pi)$ admits an asymptotic limit, and the limit is a nilpotent orbital integral. 
As an application, we compute the wave front set of $\Pi'$, the representation of $\wt{\G'}$
dual to $\Pi$,  by elementary means.
\end{abstract}

\keywords{Reductive dual pairs, Howe duality, Weil representation, wave front set, orbital integrals}        % the keywords

\subjclass{Primary: 22E45; secondary: 22E46, 22E30}      
% MR(2000) Subject Classification
\maketitle%

\tableofcontents
%%
%\medskip

\section{\textbf{Introduction}}
\label{Introduction}

Let $(\G, \G')$ be an irreducible reductive dual pair with $\G$ compact. Thus there is a division algebra $\Bbb D=\R, \C$ or $\Ha$ with an involution $\Bbb D\ni a\to\overline a\in\Bbb D$  over $\R$,  a finite dimensional 
right $\Bbb D$-vector space $\Vv$, with a positive definite hermitian 
form $(\cdot,\cdot)$, a finite dimensional 
right $\Bbb D$-vector space $\Vv'$ with a skew-hermitian 
form $(\cdot,\cdot)'$ so that $\G$ is the isometry group of $(\cdot,\cdot)$ and $\G'$ is the isometry group of $(\cdot,\cdot)'$. 
\footnote{
We use the notation $\G'$ for the second member of a dual pair because it is the centralizer of $\G$ in $\Sp(\Wv)$. We also use the notation $\cdot '$ for all the objects associated with $\G'$, such as $\g'$, $\Pi'$, ... . Unfortunately, this collides with the usual notation for the dual of a linear topological space in functional analysis, also used in this paper, such as $\mathcal{D}'(\R^n)$, $\mathcal{S}'(\R^n)$, ... . We hope the reader will guess from the context the correct meaning of the notation. 
}
Explicitly, $(\G, \G')$ is one of the pairs
\footnote{
The notation for Lie groups is as in Howe \cite{HoweTrans}. In particular, we denote the quaternion unitary group $\Ug_d(\Ha)$ by $\Sp_d$.
} 
\begin{equation} \label{classificationGG'}
(\Og_d, \Sp_{2n}(\R))\,,  \qquad  (\Ug_{d}, \Ug_{p,q})\,, \qquad        
(\Sp_d, \Og^*_{2n}). 
\end{equation}
These groups act on $\Wv=\Hom_{\mathbb{D}}(\Vv,\Vv')$ via post-multiplication and pre-multiplication by the inverse. 
We set $d=\dim_\Bbb D\Vv$ and $d'=\dim_\Bbb D\Vv'$.

There is a map
\[
\Hom_{\mathbb{D}}(\Vv,\Vv')\ni w\to w^*\in \Hom_{\mathbb{D}}(\Vv',\Vv)
\]
defined by
\[
(w v,v')'=(v,w^*v') \qquad (v\in \Vv\,,\  v'\in \Vv')\,,
\]
a non-degenerate symplectic form $\langle \cdot,\cdot\rangle$ on the real vector space $\Wv$
\[
\langle w',w\rangle=\tr_{\Bbb D/\R}(w^*w') \qquad (w,w'\in \Wv)\,,
\]
preserved by the actions of $\G$ and $\G'$. 
Here $\tr_{\Bbb D/\R}$ denotes the trace of an endomorphism considered over $\R$. 
Moreover, we have the unnormalized moment maps
\begin{equation}
\label{tautau'}
\tau:\Wv\ni w\to w^*w\in\g\,,\ \ \ \tau':\Wv\ni w\to ww^*\in\g'\,,
\end{equation}
where $\g$ and $\g'$ are the Lie algebras of
$\G$ and $\G'$,
respectively.
These maps are $\G\G'$-equivariant in the sense that
\[
\tau(gg'(w))=g\tau(w)g^{-1}\,,\ \ \ \tau'(gg'(w))=g'\tau'(w)g'{}^{-1}\qquad (g\in\G\,,\  g'\in \G'\,,\ w\in \Wv)\,.
\]
In particular the fiber $\tau^{-1}(0)\subseteq \Wv$ is a union of $\G\G'$-orbits, which are well known and easy to describe. We collect the relevant facts in the two lemmas below. Since we could not find a reference, their proofs are provided in Appendices \ref{proof of lemma 1} and \ref{proof of lemma 2}.
\begin{lemma}
\label{structure of t'-1tau(0)}
Let $m$ be the minimum of $d$ and the Witt index of the form $(\cdot ,\cdot)'$.  
In particular, $d=m$ means that the pair $(\G,\G')$ is in the stable range with $\G$ the smaller member.  
Then
\begin{equation}
\label{tauinverse0}
\tau^{-1}(0)=\Oo_m\cup\Oo_{m-1}\cup\dots\cup\Oo_0\,,
\end{equation}
where:
\begin{eqnarray}
&\text{\tiny$\bullet$}& \Oo_k\subseteq \Hom(\Vv,\Vv') \ \text{ is the subset of elements with isotropic range and 
rank $k$\,,} \nn \\
&\text{\tiny$\bullet$}& \Oo_k\cup\Oo_{k-1}\cup\dots\cup\Oo_0 \ \text{ is the closure of $\Oo_k$ for $0\leq k\leq m$,}
\nn  \\
\label{structure of t'-1tau(0)0}
&\text{\tiny$\bullet$}& \dim\Oo_k=\dim_\R(\Dc)\cdot((d'-k)k+(d-k)d)+\dim_\R \mathcal{H}_k(\Dc)
\end{eqnarray}
and
$$
\dim_\R \mathcal{H}_k(\Dc)=\dim_\R(\Dc)\cdot \frac{k(k-1)}{2}+k
$$
is the dimension,  over $\R$, of the space $\mathcal{H}_k(\Dc)$ of hermitian matrices of size $k$ 
with entries in $\Dc$.

Set $\Oo'_k=\tau'(\Oo_k)$. Then
$$
\tau'\tau^{-1}(0)=\mathcal O'_m\cup\mathcal O'_{m-1}\cup\dots\cup\mathcal O'_0
$$
where:
\begin{eqnarray}\label{structure of t'-1tau(0)1}
&\text{\tiny$\bullet$}& \mathcal O'_k\cup\mathcal O'_{k-1}\cup\dots\cup\mathcal O'_0 \ \text{is the closure of $\mathcal O'_k$ for $0\leq k\leq m$\,,} \nn \\
&\text{\tiny$\bullet$}& \dim\mathcal O'_k=d'k\,\dim_\R(\Dc)-2\dim_\R\SHs_k(\Dc)\,,
\end{eqnarray}
and
\begin{equation}\label{structure of t'-1tau(0)2}
2\dim_\R\SHs_k(\Dc)=\left\{
\begin{array}{lll}
k(k-1)\ &\ \text{if}\ \ \Dc=\R,\\
2k^2\ &\ \text{if}\ \ \Dc=\C,\\
2k(2k+1)\ &\ \text{if}\ \ \Dc=\Ha
\end{array}\right.
\end{equation}
is twice the dimension,  over $\R$, of the space $\SHs_k(\Dc)$ of skew-hermitian matrices of size $k$ with entries in $\Dc$.
\end{lemma}
For an open set $U$ in a finite dimensional real vector space and $t>0$ such that $tU\subseteq U$, let $M_t^*:
\D'(U)\to \D'(U)
$ 
denote the pullback of distributions defined by the submersion $M_t: U\ni v\to tv\in U$, \cite[Example 6.1.4]{Hormander}. In particular a distribution $u\in 
\D'(U)
$ is 
homogeneous of 
degree $a\in\C$ if $M_t^*u=t^au$
for every $t>0$.

\begin{lemma}\label{muSNK as a tempered homogeneous distribution}
For each $k=0, 1, 2, \dots, m$, the orbital integral $\mu_{\Oo_k}$ is a $\G\G'$-invariant distribution on $\Wv$, homogeneous of degree $\deg \mu_{\Oo_k}=\dim \Oo'_k-\dim \Wv$. 
\end{lemma}
Recall the embedding of the metaplectic group $\wt\Sp(\Wv)$ into the space of the tempered distributions $\Ss'(\Wv)$,
\begin{equation}
\label{T}
T:\wt\Sp(\Wv)\to \Ss'(\Wv)\,,
\end{equation}
\cite[Definition 4.23]{AubertPrzebinda_omega} and the corresponding Weil representation 
\cite[Theorem 4.27]{AubertPrzebinda_omega}. Let $\wt\G$ be the preimage of $\G$ in $\wt\Sp(\Wv)$.

The main goal of this article is to prove the following theorem and its corollary.
\begin{thm}
\label{the dilation limit of intertwining distribution, intro}
Let $\Theta_{\Pi}$ be the character be an irreducible representation $\Pi$ of $\wt\G$ that occurs in the restriction of the Weil representation to $\wt\G$.
Then, in the topology of $\Ss'(\Wv)$,
\[
t^{\deg \mu_{\Oo_m}}M_{t^{-1}}^* T(\check\Theta_{\Pi})\underset{t\to 0+}{\longrightarrow}C\mu_{\Oo_m},
\]
where $C\ne 0$, 
\[
T(\check\Theta_{\Pi})=
\int_{\G} \Theta_{\Pi}(\t g^{-1}) T(\t g)\,dg\,,
\]
$dg$ is a Haar measure on the group $\G$ and the product $\Theta_{\Pi}(\t g^{-1}) T(\t g)$ does not depend on the element $\t g$ in the preimage of $g$ in $\G$.
\end{thm}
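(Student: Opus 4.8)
The plan is to compute the dilation limit directly from the defining integral
$T(\check\Theta_\Pi)=\int_{\G}\Theta_\Pi(\tilde g^{-1})T(\tilde g)\,dg$, using the known formula for the distribution $T(\tilde g)$ coming from the Weil representation. Recall that for $\tilde g\in\wt\Sp(\Wv)$ in the set where $g-1$ is invertible, $T(\tilde g)$ is, up to an eighth root of unity and a normalization by $|\det(g-1)|^{1/2}$, the Gaussian $w\mapsto \chi\big(\tfrac14\langle c(g)w,w\rangle\big)$, where $c(g)=(g+1)(g-1)^{-1}$ is the Cayley transform and $\chi$ is the fixed character of $\R$. Since $\G$ is compact, for $g\in\G$ close to $1$ we may write $g=\exp X$ with $X\in\g$ small, and then $c(g)=\coth(X/2)$ acting on $\Wv$; as $X\to0$ this behaves like $2X^{-1}$, so the Gaussian concentrates. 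The first step is therefore to localize the $\G$-integral near the identity: one shows that the contribution of $g$ bounded away from $1$ is, after the dilation $M_{t^{-1}}^*$ and the rescaling by $t^{\deg\mu_{\Oo_m}}$, of lower order as $t\to0+$, because there $T(\tilde g)$ is a fixed tempered distribution (a nondegenerate Gaussian in suitable coordinates) and $M_{t^{-1}}^*$ applied to it decays faster than $t^{\deg\mu_{\Oo_m}}$. This uses that $\deg\mu_{\Oo_m}=\dim\Oo'_m-\dim\Wv$ is the \emph{most negative} homogeneity occurring, i.e.\ $\Oo_m$ is the top-dimensional piece of $\tau^{-1}(0)$.

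The second step is the local analysis near $g=1$. Substituting $g=\exp X$ and changing variables $X\mapsto tX$ (matching the dilation $w\mapsto t^{-1}w$ in the argument of $T$), the Gaussian $\chi\big(\tfrac14\langle c(\exp X)\,t^{-1}w,t^{-1}w\rangle\big)=\chi\big(\tfrac{1}{4t^2}\langle \coth(X/2)w,w\rangle\big)$ becomes, after $X\mapsto tX$, $\chi\big(\tfrac14\langle (\tfrac2{tX}+O(t))w,w\rangle/t\big)$ — more precisely one tracks the exact powers of $t$ — and in the limit produces $\chi\big(\tfrac14\langle \tau(w),X\rangle\big)$ up to normalization, where $\langle\tau(w),X\rangle=\tr_{\Bbb D/\R}(w^*Xw)$ pairs $\tau(w)=w^*w\in\g$ with $X\in\g$. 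The Jacobian factors $|\det(g-1)|^{-1/2}=|\det(\exp X-1)|^{-1/2}$ and $dg$ contribute the correct power of $t$ to balance $t^{\deg\mu_{\Oo_m}}$; the character $\Theta_\Pi(\tilde g^{-1})\to\dim\Pi$ (a nonzero constant) by continuity. Integrating the resulting oscillatory exponential $\chi\big(c'\langle\tau(w),X\rangle\big)$ over $X\in\g$ gives, by the Fourier-inversion/stationary-phase mechanism, a distribution supported on $\{w:\tau(w)=0\}=\tau^{-1}(0)$; the top-order part is exactly the orbital integral $\mu_{\Oo_m}$ on the open dense orbit $\Oo_m$, since a delta-type distribution at $\tau^{-1}(0)$ decomposes along the stratification of Lemma~\ref{structure of t'-1tau(0)} with $\Oo_m$ carrying the generic smooth density. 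The homogeneity bookkeeping — that the limit has degree $\deg\mu_{\Oo_m}$ and hence must be a multiple of $\mu_{\Oo_m}$ by Lemma~\ref{muSNK as a tempered homogeneous distribution} — is what pins down the answer and shows $C\ne0$ (the constant is an explicit nonzero product of $\dim\Pi$, a power of $2$, a Gaussian integral over $\g$, and a sign/root-of-unity factor).

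The main obstacle I expect is making the passage to the limit rigorous \emph{in the topology of $\Ss^*(\Wv)$}, uniformly in $w$ including the directions where the Gaussian is not decaying. Away from $\tau^{-1}(0)$ the oscillation in $X$ kills the integral, but near $\tau^{-1}(0)$ one is integrating a family of tempered distributions in $w$, not functions, so one cannot simply pass the limit under the integral sign pointwise: the correct framework is to test against a Schwartz function $\varphi(w)$, interchange the (absolutely convergent, by $\G$ compact) $g$-integral with the $w$-integral, and then analyze $\int_\g\Big(\int_\Wv \varphi(w)\,\chi(\tfrac{c}{t^2}\langle\coth(tX/2)w,w\rangle)\,dw\Big)\,|\text{Jac}|\,\Theta_\Pi(\cdot)\,dX$ by the method of stationary phase in the $w$-variable with parameter $X$, controlling error terms uniformly. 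A secondary technical point is the behavior of $T(\tilde g)$ on the lower-dimensional set where $g-1$ is singular (eigenvalue $1$ of $g$), which has measure zero in $\G$ but where $T(\tilde g)$ is given by a degenerate Gaussian supported on a subspace; one must check this set contributes nothing to the leading asymptotics, e.g.\ by a dimension count showing it produces homogeneity strictly larger than $\deg\mu_{\Oo_m}$. Once these analytic points are handled, identifying the limit with $C\mu_{\Oo_m}$ is forced by invariance and homogeneity together with the nonvanishing of the explicit constant.
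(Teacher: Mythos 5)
Your outline fails precisely at the two places where the real content of the theorem lies. First, the localization is backwards, and with it the identification of $C$. Near $g=1$ the distribution $T(\t g)$ is close to $\delta_0$, and $t^{\deg\mu_{\Oo_m}}M_{t^{-1}}^*\delta_0=t^{\dim\Oo'_m}\,\delta_0\to 0$, so a neighborhood of the identity is where the contribution is most strongly suppressed; moreover, with your scaling $g=\exp X$, $X\mapsto tX$, the phase is of size $t^{-3}\langle X^{-1}w,w\rangle$, not the linear phase $\langle\tau(w),X\rangle$ you need. The mass that survives the dilation comes from the group elements whose Gaussians are maximally degenerate: in the paper, restriction to the slice $U=N+[\so,N]^{\perp_B}$ through $N\in\Oo_m$ concentrates the group variable on (a translate of) the stabilizer $\G_N$ (Lemma \ref{distributionong}), and the constant is then, up to explicit nonzero factors, $\int_{\G_N^0}\Theta_{\Pi\otimes\chi_+^{-1}}(g)\,dg$, i.e.\ the multiplicity of the trivial representation of $\G_N^0$ in $\Pi\otimes\chi_+^{-1}$ --- not $\dim\Pi$ times a universal factor. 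Since $\Theta_\Pi$ oscillates on $\G$, nonvanishing of that integral is not automatic; it is proved by a highest-weight argument. Second, your ``secondary technical point'' is not minor: for $\G=\Og_d$ the set where $\det(g-1)=0$ is not of measure zero (it contains a whole connected component), and for $(\Og_d,\Sp_{2l'}(\R))$ with $d>l'$ that component contributes at exactly the same homogeneity (equality, not strict inequality, in Lemma \ref{inequalityof degrees}), producing a second constant $K^+$; one then has to prove $K+K^+\ne 0$, which the paper does by an integrality-versus-irrationality argument (Lemma \ref{theConstants}) --- no dimension count can dispose of this.

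Beyond these, the analytic step you yourself flag --- existence of the limit in $\Ss^*(\Wv)$ --- cannot be obtained by stationary phase in $w$ uniformly in the group parameter, because the quadratic phase degenerates exactly along $\tau^{-1}(0)$, which is where the limit is supported; there is no uniformly nondegenerate critical structure to exploit. The paper's proof takes a different route: it starts from the explicit formula for $T(\check\Theta_\Pi)$ as a superposition of the orbital integrals $F(y)$ on $\Wv$, obtains existence of the limit and the uniform bounds by pushing forward via $\tau'$, taking Fourier transforms and invoking Harish-Chandra's regularity theorem (Proposition \ref{the main limit pro}), and identifies the limit as a multiple of $\mu_{\Oo_m}$ by restricting invariant distributions to the slice $U$ (Proposition \ref{limit of orbits I}, Lemma \ref{the main localized to U lemma}) and computing $\int_U T(\t g)(u)\,du$ there (Lemmas \ref{distributiononG} and \ref{distributiononGWN}), with the orthogonal cases handled separately. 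Without substitutes for these inputs your proposal does not close.
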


\begin{remark}
If $\,\Pi$ is an irreducible admissible representation of a real reductive group $\G$ with Gelfand-Kirillov dimension $\kappa$, then \cite{BarVogAs} shows that there is a function $u_\kappa$, 
homogeneous of degree $-\kappa$ and defined on the set $\g^{rs}$ of regular semisimple elements of the Lie algebra $\g$ 
of $\G$, such that
\begin{equation}
\label{BV}
\lim_{t\to 0^+} t^\kappa \Theta_\Pi(\exp(tx))=u_\kappa(x) \qquad (x\in\g^{rs})\,. 
\end{equation}
The function $u_\kappa$ extends to a tempered distribution on $\g$. Its Fourier transform 
is a sum of nilpotent orbital integrals over nilpotent orbits of the same dimension $2\kappa$. However, the Fourier transform of the left-hand side of \eqref{BV} might even not be well defined. 
On the other hand, Theorem \ref{the dilation limit of intertwining distribution, intro} shows that for $\G$ compact, $T(\check{\Theta}_\Pi)$ admits an asymptotic limit, and the limit is a nilpotent orbital integral on $\Wv$. 
\end{remark}

The limit in Theorem \ref{the dilation limit of intertwining distribution, intro} was previously computed in \cite[Theorem 6.12]{Przebindaunitary}, even for dual pairs with a noncompact $\G$, but only on an open dense subset of $\Wv$. The explicit formula for the intertwining distribution 
from \cite{McKeePasqualePrzebindaWCSymmetryBreaking} -- see also section \ref{A limit of an intertwining distribution} -- allows us to compute the limit on the entire space $\Wv$.

Let $\Pi'$ be the irreducible representation of $\wt\G'$ corresponding to $\Pi$ in the 
Howe's correspondence. As a corollary of Theorem \ref{the dilation limit of intertwining distribution, intro}, we obtain an elementary computation of $WF(\Pi')$, the wave front of the character $\Theta_{\Pi'}$ at the identity.

\begin{corollary}\label{noproofWF}
For any representation $\Pi\otimes\Pi'$ that occurs in the restriction of the Weil representation to the dual pair $(\wt\G, \wt\G')$, 
\[
WF(\Pi')=\tau'(\tau^{-1}(0))=\overline{\Oo_m}\,.
\]
\end{corollary}

In \cite{Przebindaunitary}, the wave front set was determined using a computation of the Gelfand-Kirillov dimension
and Vogan's results in \cite{VoganGelfand}. For completeness, one should also recall that this dimension was independently computed in \cite{Przebindaunitary}, \cite{notyk} and \cite{EnrightWillenbring2004}. 
In this paper, we do not use the notion of Gelfand-Kirillov dimension. 
  
The proofs of Theorem \ref{the dilation limit of intertwining distribution, intro} and Corollary \ref{noproofWF} are given in sections \ref{A limit of an intertwining distribution} and \ref{WFSPI'}, respectively.  

%\newpage

%\section{List of notation}
%\mbox{}

\begin{center}
\includegraphics[scale=.24]{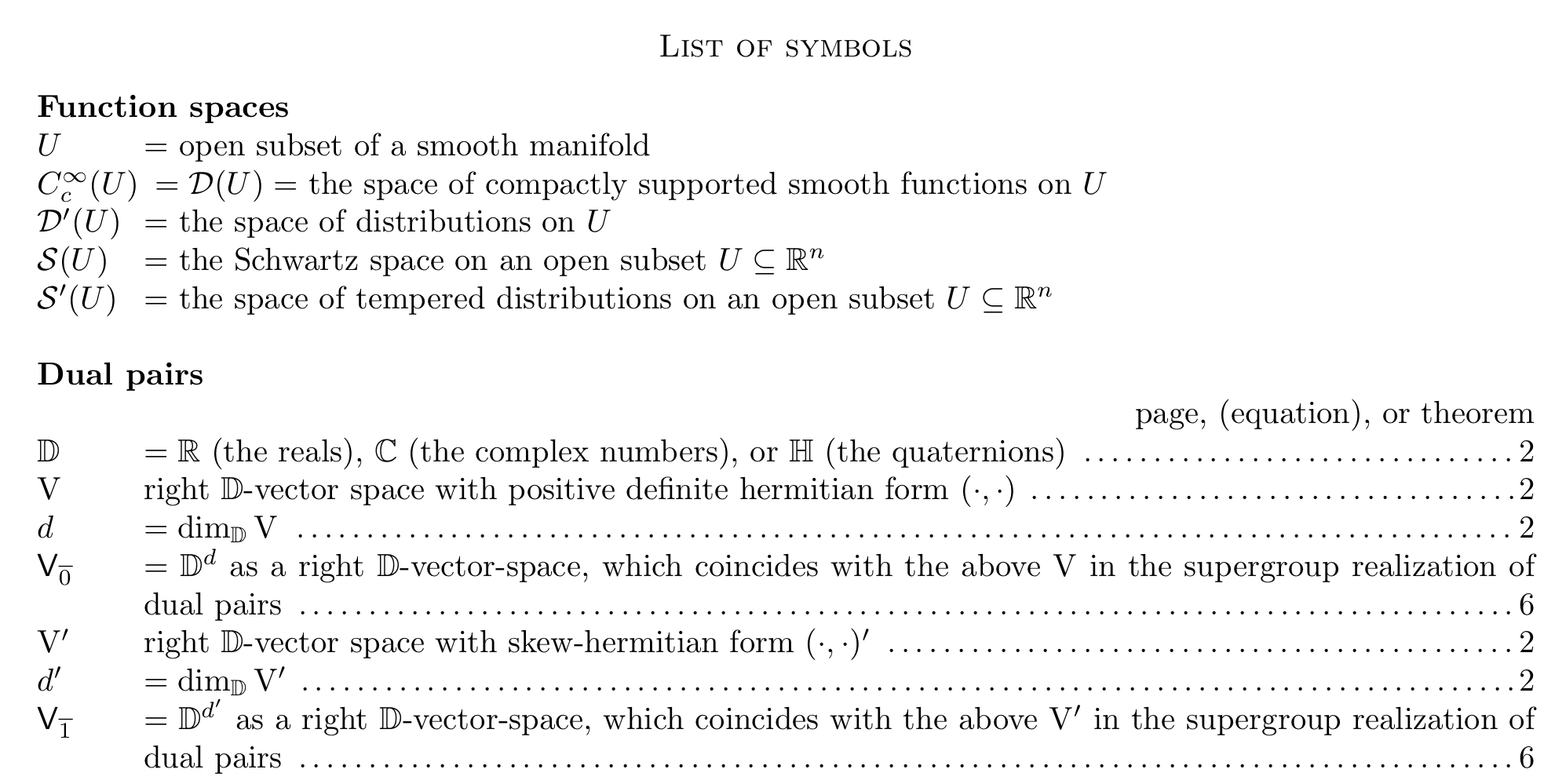}
\end{center}

\begin{center}
\includegraphics[scale=.47]{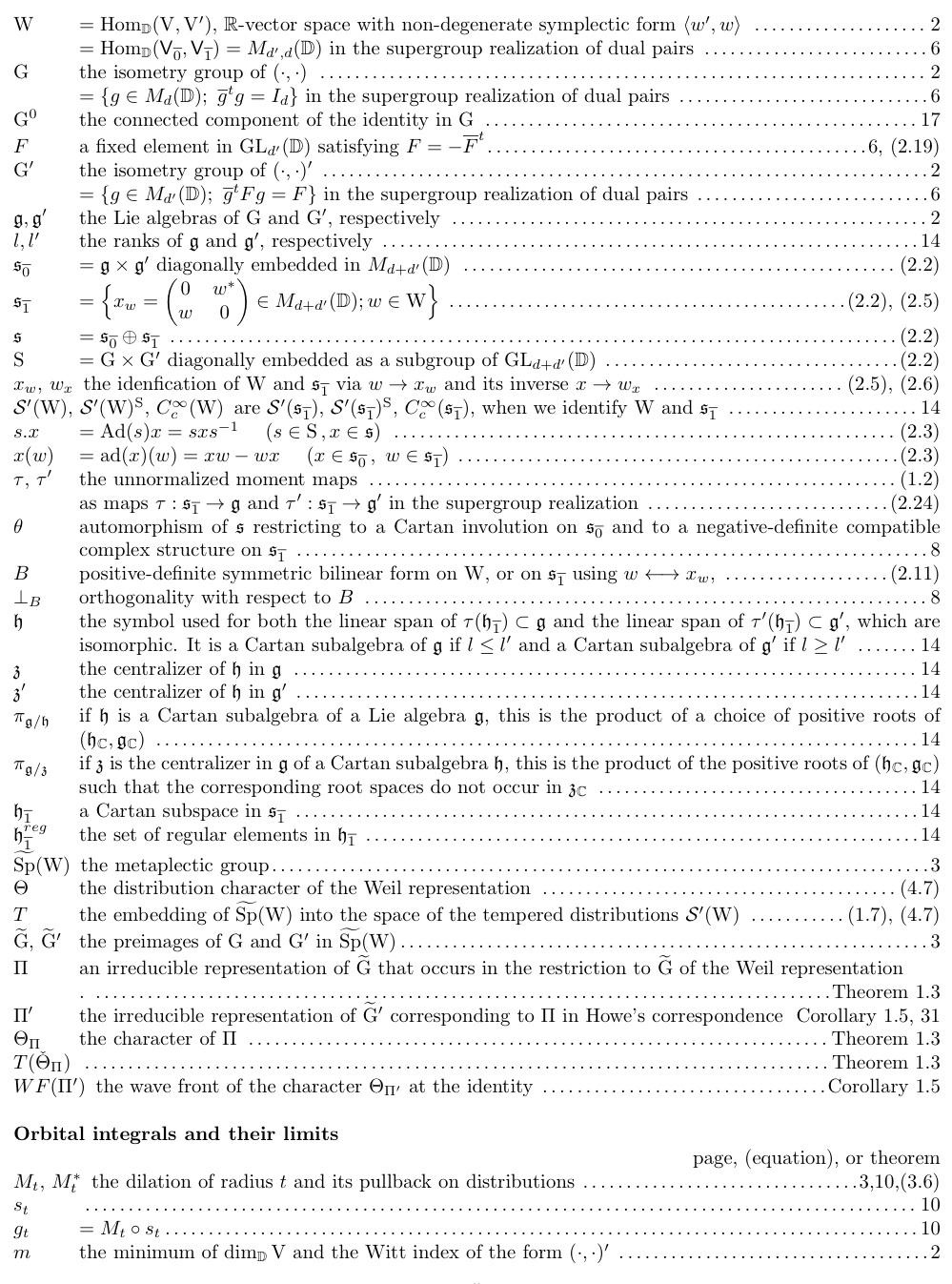}
\end{center}

\begin{center}
\includegraphics[scale=.32]{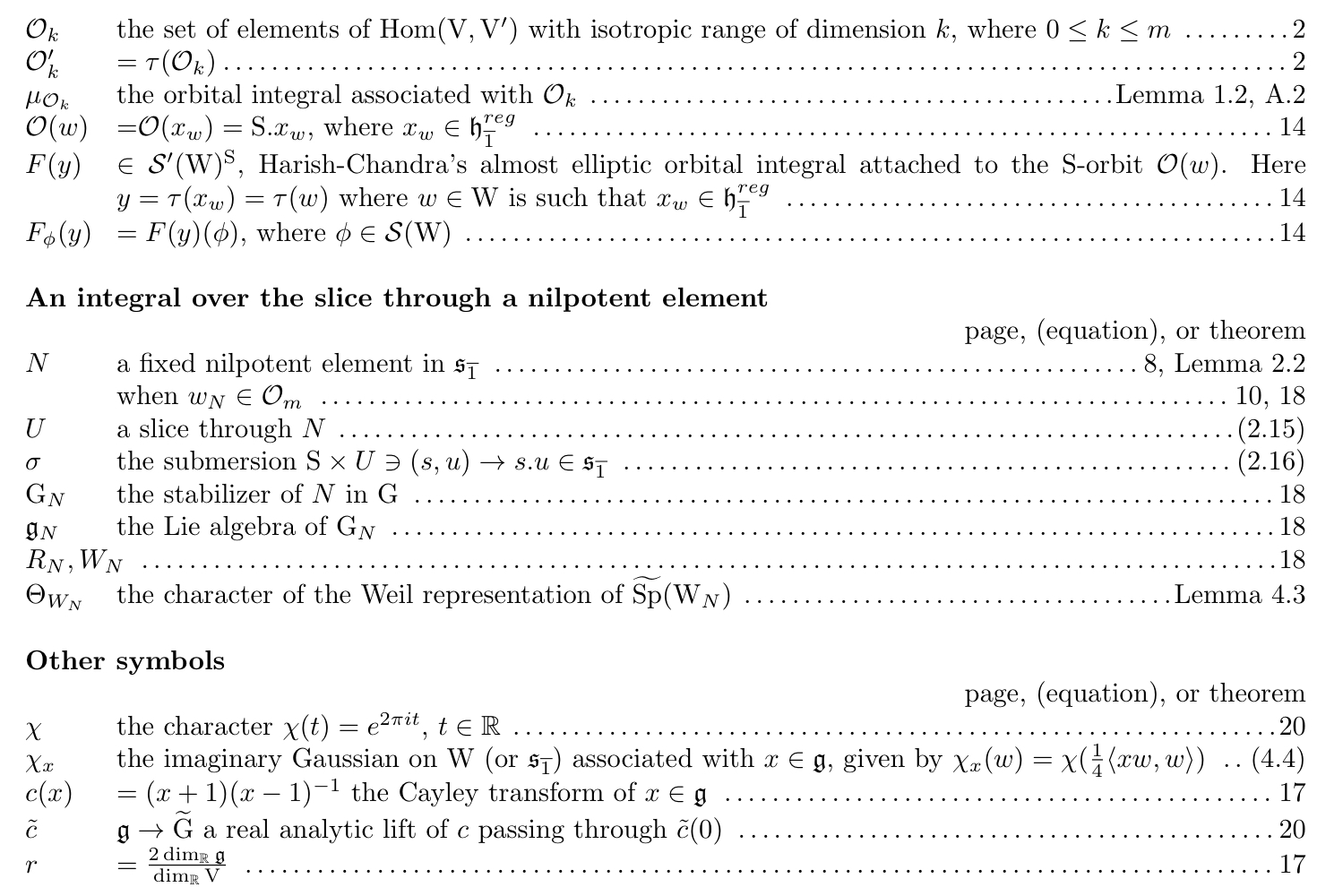}
\end{center}

\section{\textbf{A slice through a nilpotent element in the symplectic space}}
\label{A slice through nilpotent element}
\setcounter{thh}{0}
We will need the realization of the dual pair $(\G, \G')$ as a supergroup $(\Sg, \mathfrak s)$, \cite{PrzebindaLocal}. 
We present it in terms of matrices. 

Consider $\V_{\overline 0}=\Dc^d$ as a 
right vector space over $\Dc$ via
\[
av:=v a
 \qquad (v\in\V_{\overline 0},\ a\in \Dc)\,.
\]
The space $\End_\Dc(\V_{\overline 0})$ may be identified with the space of square matrices $M_d(\Dc)$ acting on $\Dc^d$ via left multiplication.
Let 
\[
(v,v')=\overline v^tv' \qquad (v,v'\in \Dc^d\,).
\]
This is a positive definite hermitian form on $\Dc^d$. The isometry group of this form is
\[
\G=\{g\in M_d(\Dc);\ \overline g^tg=I_d\}\,.
\]
Similarly, $\V_{\overline 1}=\Dc^{d'}$ is a left vector space over $\Dc$ and
\[
\G'=\{g\in M_{d'}(\Dc);\ \overline g^tFg=F\}\,,
\]
for a suitable $F=-\overline F^t\in \GL_{d'}(\Dc)$. This is the isometry group of the form
\[
(v,v')'=\overline v^tFv' \qquad (v,v'\in \Dc^{d'})\,.
\]
Set
\[
\Wv=\Hom_\Dc(\V_{\overline 0}, \V_{\overline 1})=M_{d',d}(\Dc)\,,
\]
with symplectic form
\begin{equation}\label{thesymplecticform}
\langle w', w\rangle =\tr_{\Dc/\R}(w^*w') \qquad (w,w'\in M_{d',d}(\Dc))\,,
\end{equation}
where $w^*=\overline w^tF$. 
Let 
\begin{eqnarray}
\label{super liealgebra}
&&\so=\g\times \g' \quad\text{diagonally embedded in $M_{d+d'}(\Dc)$}\,, \nn\\
&&\ss1=\Big\{ \begin{pmatrix}
0 & w^*\\
w & 0
\end{pmatrix} \in M_{d+d'}(\Dc); w\in \Wv\Big\}\,,\\
&&\Sg=\G\times\G' \quad\text{diagonally embedded as a subgroup of $\GL_{d+d'}(\Dc)$}\,. \nn
\end{eqnarray}
Then $(\Sg, \mathfrak s)$ is a real Lie supergroup, i.e. a real Lie group $\Sg$ together with a real Lie superalgebra $\mathfrak s=\so\oplus \ss1$, whose even component $\so$ is the Lie algebra of $\Sg$. We denote by $[\cdot,\cdot]$ the Lie superbracket on $\mathfrak{s}$. It agrees with the Lie bracket on $\so$ and with the anticommutator $\{x,y\}=xy+yx$ on $\ss1$. 
%Given $x\in \ss1$, its anticommutant in $\ss1$ is 
%$$
%\null^x\ss1=\{z\in \ss1:[x,z]=0\}\,.
%$$

The group $\Sg$ acts on $\mathfrak s$ by conjugation. We shall employ the notation 
\begin{eqnarray}
\label{S adjoint action on s}
&&s.x=\Ad(s)x=sxs^{-1}  \qquad (s\in \Sg\,,\ x\in \mathfrak{s})\,,\\
\label{s0 adjoint action on s}
&&x(w)=\ad(x)(w)=xw-wx \qquad (x\in \so\,,\ w\in \ss1)\,.
\end{eqnarray}
We shall also write
\begin{equation}
\label{x_w}
\Wv=M_{d',d}(\Dc)\in w \longrightarrow x_w=\begin{pmatrix}
0 & w^*\\
w & 0
\end{pmatrix}\in \ss1
\end{equation}
for the natural vector space isomorphism between $\Wv$ and $\ss1$, 
and 
\begin{equation}
\label{w_x}
\Wv=M_{d',d}(\Dc)\in w_x \longleftarrow x\in \ss1
\end{equation}
for its inverse. 
Under this isomorphisms, the adjoint action of $g\in \G\subseteq \Sg$ on $\ss1$ becomes the action on $\Wv$ by right multiplication by $g^{-1}$.
Similarly, the adjoint action of $g'\in \G'\subseteq \Sg$ on $\ss1$ becomes the action on
$\Wv$ by left multiplication by $g'$. Explicitly,
\begin{eqnarray}
\label{adjoint-actionG}
&&g.x_w=x_{wg^{-1}} \qquad (g\in \G, w\in \Wv)\,,\\
\label{adjoint-actionG'}
&&g'.x_w=x_{g'w} \qquad (g'\in \G', w\in \Wv)\,.
\end{eqnarray}
For an endomorphism $h\in \End(\Wv)$, we denote by the same symbol the corresponding endomorphism of $\ss1$, given by
\begin{equation}
\label{homom-correspondence}
h(x_w)=x_{h(w)} \qquad (w\in \Wv)\,.
\end{equation}
Notice that 
two elements $w,w'\in M_{d',d}(\Dc)$, viewed as members of $\ss1$, anticommute if and only if
\begin{equation}\label{anticommutation relations in *}
ww'{}^*+w'w^*=0\ \ \text{and}\ \ w^*w'+w'{}^*w=0\,.
\end{equation}

\begin{remark}
\label{rem:notation_W}
The unified realization of the dual pair and the symplectic space in the Lie supergroup $(\Sg,\ss1)$ is convenient in many computations. Distinguishing between the symplectic space $\Wv$ and its 
isomorphic space $\ss1$
 makes the matrix algebra more transparent. Still, most of the representation-theoretic applications of Howe duality prefer focusing on the symplectic space $\Wv$ rather than on $\ss1$. So, later in the paper, when working on orbital integrals in section \ref{Limits of orbital integrals}, we will choose to come back to the symplectic picture, which in practice corresponds to identifying $\Wv$ and $\ss1$ under the isomorphism \eqref{x_w}. With this identification, we will for instance write $g.w$, $g'.w$ or $s.w$ instead of $g.x_w$, $g'.x_w$ or $s.x_w$, as we did in \eqref{adjoint-actionG} and \eqref{adjoint-actionG'}. Correspondingly, the $\Sg$-orbit $\Sg.x_w$ of $x_w\in \ss1$ will be written $\Sg.w$, called the $\Sg$-orbit of $w\in \Wv$ and denoted $\mathcal{O}(w)$. This identification will allow us to refer to the existing literature on the subjet without any serious change of notation.
\end{remark}

We denote by $\theta$ the automorphism of $\mathfrak s$ defined in \cite[sec. 2.1]{PrzebindaLocal}. 
See also \cite[\S 5.3]{DaszKrasPrzebindaK-S2}. 
The construction of $\theta$ is done case-by-case and we shall not need these details. 
It can also be found in \cite[Proposition 1.1 and \S 2]{BaoSun_Algebraic-Smooth}. 
Its restriction to $\so$ is a Cartan involution and the restriction of $-\theta$ 
to $\ss1$ is a positive definite compatible complex structure. Using \eqref{x_w}, we can think of $\theta$ and $\langle \cdot ,\cdot \rangle$ as maps either on $\ss1$ or $\Wv$. The bilinear form $B(\cdot ,\cdot )=-\langle \theta\cdot ,\cdot \rangle$ is symmetric and positive definite. 
Moreover, $-\theta(w)=F^{-1}w$ 
for $w\in \Wv$. Hence
\begin{equation}\label{TheFormBonMatrices}
B(w', w)=\tr_{\Dc/\R}(\overline w^t w')    
\qquad 
(w,w'\in \Wv)\,.
\end{equation}
We can now get into the topic of this section. 
Fix an element $N\in \ss1$. Then $N+[\so,N]\subseteq \ss1$ may be thought of as the tangent space at $N$ to the $\Sg$-orbit in $\ss1$ through $N$. Denote by $[\so,N]^{\perp_B}\subseteq \ss1$ the $B$-orthogonal complement of $[\so,N]$. Since the form $B$ is positive definite, we have a direct sum orthogonal decomposition
\begin{equation}\label{decomposition of ss1 with respect to N}
\ss1=[\so,N]\oplus [\so,N]^{\perp_B}\,.
\end{equation}
Consider the map
\begin{equation}\label{the main submersion}
\sigma:\Sg\times \left(N+ [\so,N]^{\perp_B}\right)\ni (s,u)\to s. u\in \ss1\,.
\end{equation}
The derivative 
of $\sigma$ 
at $(s,u)$ coincides with the following linear map:
\[
\so\oplus[\so,N]^{\perp_B}\ni (X,Y)\to [X,s.u]+s.Y\in \ss1\,.
\]
Therefore the range of the derivative of $\sigma$ at $(s,u)$ is equal to
\begin{equation}\label{the range of the derivative}
[\so,s.u]+s.[\so,N]^{\perp_B}=s.\left([\so,u]+[\so,N]^{\perp_B}\right)\,.
\end{equation}
Let
\begin{equation}\label{definition of U}
U=\{u\in N+ [\so,N]^{\perp_B};\ [\so,u]+[\so,N]^{\perp_B}=\ss1\}\,.
\end{equation}
%%
%The equality (\ref{decomposition of ss1 with respect to N}) implies that  $N\in U$ and $U$ is the slice through $N$.
Then $U$ is the maximal open neighborhood of $N$ in $N+ [\so,N]^{\perp_B}$ such that the map
\begin{equation}\label{the main submersion on U -1}
\sigma:\Sg\times U\ni (s,u)\to s.u\in \ss1
\end{equation}
is a submersion. Therefore $\sigma(\Sg\times U)\subseteq \ss1$ is an open $\Sg$-invariant subset and
\begin{equation}\label{the main submersion on U}
\sigma:\Sg\times U\ni (s,u)\to s.u\in \sigma(\Sg\times U)
\end{equation}
is a surjective submersion. The title of this section refers to the set $U$ and a nilpotent element $N\in\ss1$. Here, nilpotent means nilpotent as a matrix; see \eqref{super liealgebra}. Notice that $N\in \ss1$ is nilpotent if and only if $\tau(w_N)\in \g$ is nilpotent, i.e. equal to
$0$ since $\G$ is compact. By \eqref{tauinverse0}, it follows that 
$w_N\in \Oo_k$ for some 
$k\in \{0,1,\dots,m\}$.
We shall use the map (\ref{the main submersion on U}) to study the $\Sg$-orbital integrals in $\ss1$.

\begin{lemma}\label{lemma I.4}
Keep the notation of Lemma \ref{structure of t'-1tau(0)}, and let $N\in \ss1$ such that $w_N\in \Oo_k$.
Then the map
\begin{equation}\label{lemma I.4.1}
N+ [\so,N]^{\perp_B}\ni u\to u^2\in \so
\end{equation}
is proper (i.e. the preimage of a compact set is compact).
\end{lemma}
\begin{prof}
We can choose the matrix $F$ as follows:
\begin{equation}\label{matrix F.I}
F=\begin{pmatrix}
0 & 0 & I_k\\
0 & F' & 0\\
-I_k & 0 & 0
\end{pmatrix}
\end{equation}
with $0\leq k\leq m$, where $m$ is the minimum of $d$ and the Witt index of the form $(\cdot,\cdot )'$, as in Lemma \ref{structure of t'-1tau(0)}, and $F'$ is a suitable element in $\GL_{d'-2k}(\Dc)$ satisfying $F'=-\overline{F'}^t$. Then, with the block decomposition of an element $M_{d',d}(\Dc)=M_{d',k}(\Dc)\oplus M_{d',d-k}(\Dc)$ dictated by (\ref{matrix F.I}),
\[
\left(
\begin{array}{lll}
w_1 & w_4\\
w_2 & w_5\\
w_3 & w_6
\end{array}
\right)^*=
\left(
\begin{array}{lll}
-\overline w_3^t & \overline w_2^tF' & \overline w_1^t\\
-\overline w_6^t & \overline w_5^tF' & \overline w_4^t
\end{array}
\right)\,.
\]
By the assumptions, we may choose $N=\begin{pmatrix}
0 & w_N^*\\
w_N & 0
\end{pmatrix}$ where
\begin{equation}\label{the nilpotent element}
w_N=\left(
\begin{array}{lll}
I_k & 0\\
0 & 0\\
0 & 0
\end{array}
\right)\,.
\end{equation}
Notice that 
\[
[\so,N]^{\perp_B}=\theta\left([\so,N]^{\perp}\right)=\theta\left({}^N\ss1\right)={}^{\theta N}\ss1\,,
\]
where $``\perp"$ is the orthogonal complement with respect to the symplectic form and the second equality is taken from \cite[Lemma 3.1]{PrzebindaLocal}. Since, 
\[
w_{\theta N}=-F^{-1}w_N=
\left(
\begin{array}{lll}
0 & 0\\
0 & 0\\
-I_k & 0
\end{array}
\right)\,
\]
a straightforward computation using (\ref{anticommutation relations in *}) shows that
$[\so,N]^{\perp_B}=\left\{x\in \ss1; w_x\in \Wv_{[\so,N]^{\perp_B}}\right\}$, where
\begin{equation}\label{the the set U}
\Wv_{[\so,N]^{\perp_B}}=\left\{
w=\left(
\begin{array}{lll}
0 & 0\\
0 & w_5\\
w_3 & w_6
\end{array}
\right) \in \Wv; \, w_3=-\overline w_3^t
\right\}\,.
\end{equation}
Let $x=x_w$ with $w$ as in \eqref{the the set U}. Then the image of $N+x$ under the map (\ref{lemma I.4.1}) consists of pairs of matrices
\begin{equation}\label{I.first eq}
\left(
\begin{array}{lll}
I_k & 0\\
0 & w_5\\
w_3 & w_6
\end{array}
\right)
\left(
\begin{array}{lll}
I_k & 0\\
0 & w_5\\
w_3 & w_6
\end{array}
\right)^*
=
\left(
\begin{array}{lll}
w_3 & 0 & I_k\\
-w_5\overline w_6^t & w_5\overline w_5^t F' & 0\\
-w_3\overline w_3^t-w_6\overline w_6^t & w_6\overline w_5^t F' & w_3
\end{array}
\right)\in \g'
\end{equation}
and
\begin{equation}\label{I.second eq}
\left(
\begin{array}{lll}
I_k & 0\\
0 & w_5\\
w_3 & w_6
\end{array}
\right)^*
\left(
\begin{array}{lll}
I_k & 0\\
0 & w_5\\
w_3 & w_6
\end{array}
\right)
=
\left(
\begin{array}{lll}
2w_3 & w_6\\
-\overline w_6^t & \overline w_5^t F' w_5
\end{array}
\right)\in \g\,.
\end{equation}
If the set of these pairs varies through a compact set, so do the $w_3$, $w_6$ and $w_5\overline w_5^t F'$.
Hence the claim follows.
\end{prof}

The maps $\tau$, $\tau'$ from \eqref{tautau'} can be considered as maps 
$\tau:\ss1\to \g$ and $\tau':\ss1 \to \g'$
by setting
\begin{equation}
\label{tau-bis}
\tau(x_w)=\tau(w)=w^*w \quad\text{and}\quad  \tau'(x_w)=\tau'(w)=ww^* \qquad (w\in \Wv)\,, 
\end{equation}
or equivalently, 
$$
\tau(x)=x^2|_{\V_{\overline{0}}} \quad\text{and}\quad \tau'(x)=x^2|_{\V_{\overline{1}}} 
\qquad (x\in \ss1)\,,
$$   
where $|_{\V_{\overline{0}}}$ and $|_{\V_{\overline{1}}}$ respectively indicate the selection of the upper diagonal block of size $d$ or the lower diagonal block of size $d'$. 

\begin{corollary}\label{tau is proper}
If $k=m$, then the restriction $\tau|_{N+[\so,N]^{\perp_B}}$ of
$\tau:\ss1\to \g$ to $N+[\so,N]^{\perp_B}$
is proper.
\end{corollary}
\begin{prof}
This follows from the formula (\ref{I.second eq}). Indeed, it is enough to see that the map
\[
w_5\to \overline w_5^t F' w_5
\]
is proper. The variable $w_5$ does not exist unless $\Dc=\C$ and $d>m$. This means that $m$ is the Witt index of the form $(\cdot,\cdot )'$. Hence $iF'$ is a definite hermitian matrix. Therefore the above map is proper.
\end{prof}
\begin{corollary}\label{tau is proper again}
Suppose $k=m$.
If $E\subseteq \ss1$ is a subset such that $\tau(E)\subseteq \g$ is bounded, then
\[
E\cap \left( N+[\so,N]^{\perp_B}\right)
\]
is bounded.
\end{corollary}
\begin{prof}
This is immediate from Corollary \ref{tau is proper}.
\end{prof}
\section{\textbf{Limits of orbital integrals}}
\label{Limits of orbital integrals}
\setcounter{thh}{0}
Since we are interested in $\Sg$-invariant distributions, we want to see dilations by $t>0$ in $\ss1$ as transformations in the slice $U$ modulo the adjoint action of the group $\Sg$. This will be accomplished in Lemma \ref{lemma about g(t)} below.

For $t>0$ let
\[
s_t=\left(
\begin{array}{lll}
t^{-1}I_k & 0 & 0\\
0 & I & 0\\
0 & 0 & tI_k
\end{array}
\right)
\]
where the blocks are as in (\ref{matrix F.I}). Then $s_t\in \G'$.
Define isomorphisms $s_t$, $M_t$, $g_t$ of $\Wv=M_{d',d}(\Dc)$ by
\begin{eqnarray*}
&&s_t(w)=s_tw \qquad (w\in \Wv)\,,\\
&&M_t(w)=tw \qquad (w\in \Wv)\,,
\end{eqnarray*}
and $g_t=M_t\circ s_t$, i.e. 
$$
g_t(w)=ts_tw \qquad (w\in \Wv)\,. 
$$
Explicitly, %%
\begin{equation}\label{gt acting on NperpB}
g_t\left(
\begin{array}{lll}
w_1 & w_4\\
w_2 & w_5\\
w_3 & w_6
\end{array}
\right)
=
\left(
\begin{array}{lll}
w_1 & w_4\\
tw_2 & tw_5\\
t^2 w_3 & t^2w_6
\end{array}
\right).
\end{equation}
We denote by the same symbols the corresponding linear isomorphisms of $\ss1$, as in \eqref{homom-correspondence}. In particular, 
$$
g_t(x)=t s_t.x \qquad (x\in \ss1)\,.
$$

\begin{lemma}\label{lemma about g(t)}
The linear map $g_t\in \GL(\ss1)$ preserves $[\so,N]^{\perp_B}$, $N+[\so,N]^{\perp_B}$ and the subset $U\subseteq N+[\so,N]^{\perp_B}$ defined in \eqref{definition of U}. In fact,
\begin{equation}\label{gt acting on NperpB11}
\tau|_U\circ g_t|_U=M_{t^2}\circ \tau|_U\,.
\end{equation}
Furthermore, for $\sigma$ as in \eqref{the main submersion on U -1},
\begin{equation}\label{gt acting on NperpB and sigma}
g_t\circ \sigma=\sigma\circ(\Ad(s_t) \times g_t|_{N+[\so,N]^{\perp_B}})\,,
\end{equation}
where $g_t|_{N+[\so,N]^{\perp_B}}$ on the right-hand side stands for the restriction of $g_t$ to $N+[\so,N]^{\perp_B}$. 
In particular, the subset $\sigma(\Sg\times U)\subseteq \ss1$ is closed under multiplication by  positive reals. Moreover, the determinant of the derivative $g_t'$ of the map $g_t:\ss1\to\ss1$ is
\begin{equation}\label{det gt}
\det(g_t')=t^{\dim\ss1}\,,
\end{equation}
and 
\begin{equation}\label{det gt'}
\det((g_t|_{N+[\so,N]^{\perp_B}})')=t^{\dim\ss1-\dim\Oo'_k}\,.
\end{equation}
\end{lemma}
\begin{prof}
The preservation of $[\so,N]^{\perp_B}$ and $N+[\so,N]^{\perp_B}$ follows from \eqref{gt acting on NperpB}, \eqref{the nilpotent element} and \eqref{the the set U}. The equality (\ref{gt acting on NperpB11}) follows from (\ref{gt acting on NperpB}) and (\ref{I.second eq}). Notice that
$$
\Big[ \begin{pmatrix}
y & 0\\ 0 & y'
\end{pmatrix}, g_t u\Big]=g_t \Big[ \begin{pmatrix}
y & 0\\ 0 & \Ad(s_t^{-1})y'
\end{pmatrix}, u\Big]
\qquad (y\in\g,\, y'\in \g',\, t>0,\, u\in U)\,.
$$
So
\[
[\so,g_t u]=g_t[\so,u] \qquad (t>0,\, u\in U)\,.
\]
Hence
\[
[\so,g_t u]+[\so,N]^{\perp_B}=[\so,g_t u]+g_t[\so,N]^{\perp_B}=g_t([\so,u]+[\so,N]^{\perp_B})\,.
\]
This implies that the set $U$ is also preserved.

To verify (\ref{gt acting on NperpB and sigma}),
we notice that for $s\in \Sg$ and $u\in N+[\so,N]^{\perp_B}$ we have
\begin{eqnarray*}
g_t\circ \sigma(s,u)&=&g_t(s.u)=t(s_ts).u=(s_tss_t^{-1}).(ts_t.u)\\
&=&\sigma(s_tss_t^{-1}, g_tu)=\sigma\circ(\Ad(s_t) \times g_t|_{N+[\so,N]^{\perp_B}})(s,u)\,.
\end{eqnarray*}
Fix $t>0$. The conjugation by $s_{t^{-1}}$ preserves $\sigma(\Sg\times U)$ because $s_{t^{-1}}\in\Sg$. Since multiplication by $t$ coincides with $g_t\circ s_{t^{-1}}$, \eqref{gt acting on NperpB and sigma} implies that $\sigma(\Sg\times U)$ is preserved under the multiplication by $t$. 

Since $g_t'=(M_t\circ s_t)'=M_t\circ s_t$ and since $\det s_t=1$, (\ref{det gt}) is obvious. 

In order to verify (\ref{det gt'}) we proceed as follows. 
The derivative of the map $g_t|_{N+[\so,N]^{\perp_B}}$ coincides with the following linear map
\[
\left(
\begin{array}{lll}
0 & 0\\
0 & w_5\\
w_3 & w_6
\end{array}
\right)
\to
\left(
\begin{array}{lll}
0 & 0\\
0 & tw_5\\
t^2w_3 & t^2w_6
\end{array}
\right)\,.
\]
%The determinant of this map is equal to 
%the determinant of 
%\[
%\left(
%\begin{array}{lll}
%0 & w_4\\
%0 & w_5\\
%w_3 & w_6
%\end{array}
%\right)
%\to
%\left(
%\begin{array}{lll}
%0 &  tw_4\\
%0 & tw_5\\
%t^2 w_3 & t w_6
%\end{array}
%\right)\,.
%\]
By \eqref{the the set U}, the determinant of this map is equal to 
\[
t^{2\dim_\R \SHs_k(\Dc)}t^{d'(d-k)\dim_\R\Dc}\,.
\]
Since, by (\ref{structure of t'-1tau(0)1}),
\[
2\dim_\R \SHs_k(\Dc)+d'(d-k)\dim_\R\Dc=\dim \ss1-\dim \Oo'_k\,,
\]
(\ref{det gt'}) follows.
\end{prof}

Next we consider an $\Sg$-invariant distribution $F$ on $\sigma(\Sg\times U)$. 
The following lemma proves that the restriction of $F$ to $U$ exists and that the restriction of the $t$-dilation of $F$ is equal to $(g_t|_U)^*$ applied to $F|_U$.

\begin{lemma}\label{invariant distributions and g(t)}
Suppose $F\in \mathcal D'(\sigma(\Sg\times U))^\Sg$. Then the intersection of the wave front set of $F$ with the conormal bundle to $U$ is zero, so that the restriction 
$F|_U$ is well defined. Furthermore, $\sigma^*F=\mu_\Sg\otimes F|_{U}$, where $\mu_\Sg$ is a Haar measure on $\Sg$. Moreover, for $t>0$,
\begin{equation}\label{invariant distributions and g(t)1}
M_t^*F=g_t^*F\,,
\end{equation}
and
\begin{equation}\label{invariant distributions and g(t)3}
(M_{t}^*F)|_U=(g_t|_U)^*F|_U\,.
\end{equation}
\end{lemma}
\begin{prof}
Since $s_t^*F=F$ %(because $s_t\in \Sg$) 
we see that $g_t^*F=M_t^*s_t^*F=M_t^*F$ and \eqref{invariant distributions and g(t)1} follows.

The wave front set of $F$ is contained in the union of the conormal bundles to the $\Sg$-orbits through elements of $\ss1$. This is because the characteristic variety of the system of differential equations expressing the condition that this distribution is annihilated by the action of the Lie algebra $\so$ coincides with that set. The intersection of this set with the conormal bundle to $U$ is zero. Indeed, at each point $u\in U$, this intersection is equal to the annihilator of both, the tangent space to $U$ at $u$ and  the tangent space to the $\Sg$-orbit through $u$. Since by \eqref{the main submersion on U -1} the map $\sigma$ is submersive,  these tangent spaces add up to the whole tangent space to $\ss1$ at $u$. Hence the annihilator is zero.
Therefore $F$ restricts to $U$.  The formula $\sigma^*F=\mu_\Sg\otimes F|_{U}$ follows from the diagram
\[
U\longrightarrow \Sg\times U \overset{\sigma}{\longrightarrow} \sigma(\Sg\times U),\ \ u \to (1,u)\to u\,,
\]
which shows that the restriction to $U$ equals the composition of $\sigma^*$ and the pullback via the embedding of $U$ into $\Sg\times U$.
By combining this with \eqref{invariant distributions and g(t)1} we deduce \eqref{invariant distributions and g(t)3}.
\end{prof}

The following lemma shows that the computation of limits of weighted dilations of $\Sg$-invariant distributions on $\Wv$ may be accomplished by computing weighted limits on the slice $U$.

\begin{lemma}\label{invariant distributions and g(t) and limits}
Suppose $F, F_0\in \mathcal D'(\sigma(\Sg\times U))^\Sg$ and $a\in\C$ are such that
\[
t^a(g_{t^{-1}}|_U)^*F|_U \underset{t\to 0+}{\longrightarrow}F_0|_U\,.
\]
Then
\[
t^a M_{t^{-1}}^*F\underset{t\to 0+}{\longrightarrow}F_0
\]
in $\mathcal D'(\sigma(\Sg\times U))$.
\end{lemma}
\begin{prof}
Proposition \ref{I.1} shows that it suffices to see that
\[
\sigma^*\left(t^a M_{t^{-1}}^*F\right)\underset{t\to 0+}{\longrightarrow}\sigma^*F_0\,.
\]
But Lemma \ref{invariant distributions and g(t)} implies
\[
\sigma^*\left(t^a M_{t^{-1}}^*F\right)=\mu_\Sg\otimes t^a(g_{t^{-1}}|_U)^*F|_U
\ \ \ \text{and}\ \ \ \sigma^*F_0=\mu_\Sg\otimes F_0|_{U}\,.
\]
Hence the claim follows.
\end{prof}

Now we are ready to compute the limit of the weighted dilatation of the unnormalized almost semisimple orbital integral $\mu_{\mathcal O}$.
\begin{proposition}\label{limit of orbits I}
Let $\mathcal O\subseteq \sigma(\Sg\times U)$ be an $\Sg$-orbit 
and let $\mu_{\mathcal O}\in \mathcal D'(\ss1)$ be the corresponding orbital integral. 
%Assume $\mu_{\mathcal O}$ is $\Sg$ - invariant.
Then
\begin{eqnarray}\label{the limit of regular orbital integral}
\underset{t\to 0+}{\lim}t^{\deg \mu_{\Oo_m}}M_{t^{-1}}^*\mu_{\Oo}|_{\sigma(\Sg\times U)}=\mu_{\mathcal O}|_{U}(U)\,\mu_{\Oo_m}|_{\sigma(\Sg\times U)}\,,
\end{eqnarray}
where $\mu_{\Oo_m}\in \mathcal D'(\sigma(\Sg\times U))$ is the orbital integral on the orbit $\Oo_m=\Sg. N$ normalized so that $\mu_{\Oo_m}|_{U}$ is the Dirac delta at $N$ and the convergence is in $\mathcal D'(\sigma(\Sg\times U))$. 
\end{proposition}
Before the proof, we make two remarks. 
First, the scalar $\mu_{\mathcal O}|_{U}(U)$ may be thought of as the volume of the intersection $\mathcal O\cap U$. This volume is finite because the restriction $\mu_{\mathcal O}|_{U}$ is a distribution on $U$ with support equal to the closure of $\mathcal O\cap U$, which is compact by Corollary \ref{tau is proper again}, since $\tau(\Oo)$ is a $\G$-orbit and therefore bounded. Hence $\mu_{\mathcal O}|_{U}$ applies to any smooth function on $U$, in particular to the indicator function $\Bbb I_U$, equal to $1$ on $U$. Thus $\mu_{\mathcal O}|_{U}(U)=\mu_{\mathcal O}|_{U}(\Bbb I_U)$.

The second remark is that our normalization of $\mu_{\Oo_m}$ does not depend on the normalization of $\mu_{\mathcal O}$, which is absorbed by the factor $\mu_{\mathcal O}|_{U}(U)$. 

\begin{prof}
By  the definition of pull-back and (\ref{det gt'}) 
\[
\mu_{\mathcal O}|_{U}(\psi\circ g_{t}|_U)=
t^{\dim \Oo'_m-\dim\ss1}(g_{t^{-1}}|_U)^*\mu_{\mathcal O}|_{U}(\psi)\,.
\]
(Indeed, for a distribution equal to a function $f(x)$ times the Lebesgue measure,
\begin{equation}
\label{transformation_gt}
g_{t^{-1}}^* f(\psi)=\int_{\ss1} f(g_{t^{-1}}x)\psi(x)\,dx=
|\det(g_{t}')| \int_{\ss1} f(x)\psi(g_{t}x)\,dx\,.
\end{equation}
Since $\mu_{\mathcal O}|_{U}$ is a limit of such functions, it has the same transformation property.) 
We see from (\ref{gt acting on NperpB}) that
\[
\underset{t\to 0}{\lim}\ g_tu=N \qquad (u\in U)\,.
\]
Hence, for any $\psi\in C_c^\infty(U)$,
\begin{equation}\label{additional1}
\underset{t\to 0}{\lim}\ \mu_{\mathcal O}|_{U}(\psi\circ g_t)=\mu_{\mathcal O}|_{U}(\psi(N)\Bbb I_U)
=\mu_{\mathcal O}|_{U}(\Bbb I_U)\psi(N)=\mu_{\mathcal O}|_{U}(U)\psi(N)\,.
\end{equation}
Replacing $\Oo$ with $\Oo_m$ in \eqref{additional1} we see that the restriction of $\mu_{\Oo_m}$ to $U$ is a multiple of the Dirac delta at $N$.
Thus (\ref{the limit of regular orbital integral}) follows from Lemma \ref{invariant distributions and g(t) and limits} with $F_0=\mu_{\Oo_m}$.
\end{prof}

Next, we want to compute the limit of the weighted dilations of the normalized almost elliptic orbital integrals. We need some additional notation. 
%%
% $F(y)$, we still need to compute the weight. This will be done in Corollary \ref{limit of orbital integrals and Harish-Chandra integral on W} below.
%%

For $x,y\in \mathfrak{s}_{\overline{1}}$, let $\{x,y\}=xy+yx\in \mathfrak{s}_{\overline{0}}$ denote their anticommutator. Let $x \in \ss1$ be fixed. 
The anticommutant and the double anticommutant of $x$ in $\ss1$ are  
\begin{eqnarray*}
\anticomm{x}{\ss1}&=&\{y \in \ss1:\{x,y\}=0\}\,,\\
\danticomm{x}{\ss1}&=&\bigcap_{y \in \anticomm{x}{\ss1}} \anticomm{y}{\ss1}\,,
\end{eqnarray*}
respectively. 
A semisimple element $x \in \ss1$ is said to be regular if it is nonzero and $\dim(S.x) \geq \dim(S.y)$ for all semisimple $y \in \ss1$.  
A Cartan subspace $\hs1$ of $\ss1$ is defined as the double anticommutant of a regular semisimple element $x \in \ss1$. The Cartan subspaces of $\ss1$ are classified in \cite[\S 6]{PrzebindaLocal}. See also \cite[\S 4]{McKeePasqualePrzebindaSuper} 
and \cite[\S 2.2]{McKeePasqualePrzebindaWCestimates} for additional information.
We denote by $\reg{\hs1}$ the set of regular elements in $\hs1$. 
As in \cite[(13)--(15)]{McKeePasqualePrzebindaWCestimates} 
the linear spans of $\tau(\h_{\overline 1})$ and $\tau'(\h_{\overline 1})$ will be identified and both denoted by $\h$. 

Let $l$ and $l'$ denote the ranks of $\g$ and $\g'$, respectively. 
Then $\h\subseteq\g$ is a Cartan subalgebra of $\g$ if $l\leq l'$ and $\h\subseteq\g'$ is a Cartan subalgebra of $\g'$ otherwise. One can check that $d>d'$ is equivalent to $l>l'$ except for $(\G,\G')=(\Og_{2l+1},\Sp_{2l'})$ with $l'=l$.

Let $\z\subseteq \g$ and $\z'\subseteq \g'$ be the centralizers of $\h$.
Suppose $\h$ is a Cartan subalgebra of $\g$ and fix a set of positive roots of $(\h_\C, \g_\C)$. 
Let $\pi_{\g/\h}$ denote the product of all positive roots and let $\pi_{\g/\z}$ denote the 
product of all positive roots such that the corresponding root spaces do not occur in $\z_\C$. 
Similar notations will be used when $\h$ is a Cartan subalgebra of $\g'$.

Harish-Chandra's almost elliptic orbital integral $F(y)\in \Ss'(\Wv)^\Sg$ attached to the $\Sg$-orbit $\mathcal{O}(w)$ was defined in \cite[Definition 3.2]{McKeePasqualePrzebindaWCestimates}.
Here $y\in \cup_{\h_{\overline 1}} \tau(\h_{\overline 1}^{reg})$, the union being on the family of 
mutually non-$\Sg$-conjugate Cartan subspaces of $\ss1$, and $w\in \Wv$ is such that $x_w\in \h_{\overline 1}^{reg}$ and $y=\tau(x_w)=\tau(w)$. Observe that, by classification, \cite[\S 6]{PrzebindaLocal}, all Cartan subspaces $\h_{\overline 1}\subseteq \ss1$ are $\Sg$-conjugate except when $(\G,\G')=(\Ug_l,\Ug_{p,q})$ with $l<p+q$. Besides these exceptional cases, the above union 
reduces therefore to one term. Following Harish-Chandra's notation, we shall write $F_\phi(y)$ for $F(y)(\phi)$, where $\phi\in \Ss(\Wv)$. 

As indicated in Remark \ref{rem:notation_W}, in the following we will adopt the notation from \cite{McKeePasqualePrzebindaWCestimates} (and references therein) and identify $\ss1$ and $\Wv$ by means of the isomorphism \eqref{x_w}.
So, for instance, $\mathcal{O}(w)$ means $\mathcal{O}(x_w)=\Sg.x_w$ and we write $w\in \h_{\overline 1}^{reg}$ instead of $x_w\in \h_{\overline 1}^{reg}$. Moreover, 
$\Ss'(\Wv)^\Sg=\Ss'(\ss1)^\Sg$, $\Ss'(\Wv)=\Ss'(\ss1)$, and $C_c^\infty(\Wv)=C_c^\infty(\ss1)$.

We refer to \cite[Theorems 3.4 and 3.6]{McKeePasqualePrzebindaWCestimates} for the differentiable extension and regularity properties of the map $y \to F(y)$. 
These properties of are different when $l>l'$ or $l\leq l'$. These two cases have therefore to be treated separately. 

In fact, when $l>l'$, then $F(y)$ turns out to be a constant multiple of Harish-Chandra's orbital integral; see \cite[(39)]{McKeePasqualePrzebindaWCestimates}. When $l\leq l'$, then $F(y)$ can still be related to Harish-Chandra's orbital integral, but the situation is more involved: the differential extension of $F(y)$, up to a specific order, is on the set $\h\cap \tau(\Wv)$. We refer to \cite[Theorem 3.6 and (72)]{McKeePasqualePrzebindaWCestimates} for more details. 

\begin{corollary}\label{limit of orbital integrals and Harish-Chandra integral on W}
Let $l>l'$.
Assume (for the construction of $U$) that $k=m$. Then, 
\begin{equation}\label{second limit formula}
\underset{t\to 0+}{\lim}\ t^{\deg\mu_{\Oo_m}} M_{t^{-1}}^*F(y)|_{\sigma(\Sg\times U)}=
F(y)|_{U}(U)\,\mu_{\Oo_m}|_{\sigma(\Sg\times U)}\,.
\end{equation}
\end{corollary}
\begin{prof}
The statement (\ref{second limit formula}) is immediate from Proposition \ref{limit of orbits I}. 
\end{prof}

As in \cite{HC-57DifferentialOperators}, we identify the symmetric algebra on $\g$ with $\C[\g]$, the algebra of the polynomials on $\g$, using the invariant symmetric bilinear form $B$ on $\g$.
\begin{lemma}\label{the main localized to U lemma}
Assume that $l\leq l'$. 
Let $y\in \h\cap \tau(\Wv)$ and let $Q\in \C[\h]$ be such that $\deg (Q)$ is small enough so that, by \cite[Theorem 3.6]{McKeePasqualePrzebindaWCestimates}, 
$\partial(Q)F(y)$ exists. Then
\begin{equation}\label{the main localized to U lemma1}
t^{\deg\mu_{\Oo_{m}}} M_{t^{-1}}^*\partial(Q)F(y)|_{\sigma(\Sg\times U)}
\underset{t\to 0+}{\longrightarrow }C\mu_{\Oo_m}
\end{equation}
in $\mathcal D'(\sigma(\Sg\times U))$, where $C=\partial(Q)F(y)|_{U}(\Bbb I_U)$ is the value of the  compactly supported distribution $\partial(Q)F(y)|_{U}$ on $U$ applied to the indicator function $\Bbb I_U$.
\end{lemma}
\begin{prof}
We see from Lemma \ref{invariant distributions and g(t) and limits} that it suffices to prove the lemma with (\ref{the main localized to U lemma1}) replaced by
\begin{equation}\label{the main localized to U lemma1'}
t^{\deg\mu_{\Oo_{m}}} \big(g_{t^{-1}}|_U\big)^*\partial(Q)F(y)|_{U}
\underset{t\to 0+}{\longrightarrow }C\delta_{N}|_U\,.
\end{equation}
Let $\psi\in C_c^\infty(U)$. Lemma \ref{muSNK as a tempered homogeneous distribution}, the argument of \eqref{transformation_gt}, and the equality \eqref{det gt'} show that
\[
t^{\deg\mu_{\Oo_{m}}} \big(g_{t^{-1}}|_U\big)^*\partial(Q)F(y)|_{U}(\psi)=\partial(Q)F(y)|_{U}(\psi\circ g_t)\,.
\]
Since $\partial(Q)F(y)|_{U}$ is a compactly supported distribution on $U$,
\begin{eqnarray*}
\partial(Q)F(y)|_{U}(\psi\circ g_t)&\underset{t\to 0+}{\longrightarrow }&\partial(Q)F(y)|_{U}(\psi(N)\Bbb I_U)\\
&=&\partial(Q)F(y)|_{U}(\Bbb I_U)\delta_{N}(\psi)\,.
\end{eqnarray*}
\vskip -6mm 
\end{prof}

Next we show that the convergence of Lemma \ref{the main localized to U lemma} happens not only in distributions in $\mathcal D'(\sigma(\Sg\times U))$ but also in 
$\Ss'(\Wv)$. This generalization will require Harish-Chandra's Regularity Theorem.
\begin{proposition}\label{the main limit pro}
Let $y\in \h\cap \tau(\Wv)$. If $l\leq l'$ let $Q\in \C[\h]$ be such that $\deg (Q)$ is small enough so that, by \cite[Theorem 3.6]{McKeePasqualePrzebindaWCestimates}, $\partial(Q)F(y)$ exists. If $l>l'$ set $\partial(Q)F(y)=F(y)$. Then, 
\begin{equation}\label{the main limit pro1}
t^{\deg\mu_{\Oo_{m}}} M_{t^{-1}}^*\partial(Q)F(y)
\underset{t\to 0+}{\longrightarrow }C\mu_{\Oo_m}
\end{equation}
in the topology of $\Ss'(\Wv)$, where $C=\partial(Q)F(y)|_{U}(\Bbb I_U)$. 
Moreover, there is a seminorm $q$ on $\Ss(\Wv)$ and $N\geq 0$ such that
\begin{multline}\label{the main limit pro1'}
\left|t^{\deg\mu_{\Oo_{m}}} M_{t^{-1}}^*\partial(Q)F_\phi(y)\right|\leq (1+|y|)^N q(\phi) 
\qquad (0<t\leq 1,\ y\in \h\cap \tau(\Wv),\ \phi\in \Ss(\Wv))\,.
\end{multline}
\end{proposition}
\begin{prof}
Since the pull-back
\[
\Ss(\g')\ni\psi\to \psi\circ \tau'\in\Ss(\Wv)
\]
is well defined and continuous, we have a push-forward of tempered distributions
\[
\Ss'(\Wv)\ni u\to \tau'_*u\in \Ss'(\g')
\,,\ \ \ \tau'_*u(\psi)=u(\psi\circ \tau')\,,
\]
see \cite[(6.1)]{PrzebindaUnipotent}.
If $l> l'$ then $\tau'_*(F(y))$ is a constant multiple of a semisimple orbital integral supported on the $\G'$-orbit through $y$ in $\g'$; see \cite[(39)--(40)]{McKeePasqualePrzebindaWCestimates}.  As  a distribution, it is annihilated by the ideal in $\C[\g']^{\G'}$ of the polynomials vanishing on that orbit. This is an ideal of finite codimension.

We shall prove a similar statement about $\tau'_*(\partial(Q)F(y))$ in the case $l\leq l'$. 
According to  \cite[(75) 
for $\G=\Og_{2l+1}$ with $l\leq l'$, and 
(72) otherwise]{McKeePasqualePrzebindaWCestimates}, we may complete $\h$ to an elliptic Cartan subalgebra $\h'=\h\oplus\h''\subseteq \g'$ and there is a positive constant $C$ such that for $\psi\in \Ss(\g')$
\begin{eqnarray}\label{the main limit pro2}
\tau'_*(\partial(Q)F(y))(\psi)&=&\partial(Q)\tau'_*(F(y))(\psi)\\
&=&C\, \partial(Q\t\pi_{\z'/\h'})\left(\pi_{\g'/\h'}(y+y'')
\int_{\G'}\psi(g.(y+y''))\,dg\right)\Big|_{y''=0}\,,\nn
\end{eqnarray}
where $y''\in\h''$, $\t\pi_{\z'/\h'}=\pi_{\z'/\h'}^{short}$ (the product of the positive short roots) if $\G=\Og_{2l+1}$ with $l<l'$, and $\t\pi_{\z'/\h'}=\pi_{\z'/\h'}$ otherwise. Let $P\in\C[\g']^{\G'}$. Then
\begin{eqnarray}\label{the main limit pro3}
&&\partial(Q\t\pi_{\z'/\h'})\left(\pi_{\g'/\h'}(y+y'')\int_{\G'}(P\psi)(g.(y+y''))\,dg\right)\Big|_{y''=0}\\
&=&\partial(Q\t\pi_{\z'/\h'})\left(P(y+y'')\pi_{\g'/\h'}(y+y'')\int_{\G'}\psi(g.(y+y''))\,dg\right)\Big|_{y''=0}\,.\nn
\end{eqnarray}
By commuting the operators of multiplication by a polynomial with differentiation, we may write
\[
\partial(Q\t\pi_{\z'/\h'})P(y+y'')=\sum_{|\alpha|\leq\deg(Q\t\pi_{\z'/\h'})}P_\alpha(y+y'')\partial^\alpha\,,
\]
where $\partial^\alpha=\prod_{j=1}^{l'}\partial(J_j')^{\alpha_j}$ for $\alpha=(\alpha_1,\dots, \alpha_{l'})$. Hence, (\ref{the main limit pro3}) is equal to
\begin{eqnarray}\label{the main limit pro4}
\sum_{|\alpha|\leq\deg(Q\t\pi_{\z'/\h'})}P_\alpha(y)\partial^\alpha\left(\pi_{\g'/\h'}(y+y'')\int_{\G'}\psi(g.(y+y''))\,dg\right)\Big|_{y''=0}\,.
\end{eqnarray}
We see from (\ref{the main limit pro2})--(\ref{the main limit pro4}) that the range of the map
\begin{eqnarray}\label{the main limit pro5}
\C[\g']^{\G'}\ni P \to \tau'_*(\partial(Q)F(y))\cdot P\in \Ss'(\g')
\end{eqnarray}
is contained in the space spanned by the distributions
\[
\partial^\alpha\left(\pi_{\g'/\h'}(y+y'')\int_{\G'}\psi(g.(y+y''))\,dg\right)\Big|_{y''=0} \qquad (|\alpha|\leq\deg(Q\t\pi_{\z'/\h'}))\,.
\]
In particular this range is finite dimensional. Therefore the distribution \eqref{the main limit pro2} is annihilated by an ideal of finite co-dimension in $\C[\g']^{\G'}$. 

Hence, in any case ($l>l'$ or $l\leq l'$), the Fourier transform
\begin{equation}\label{the main limit pro2'}
\left(\tau'_*(\partial(Q)F(y))\right)^\wedge\in\Ss'(\g')
\end{equation}
is annihilated by an ideal of finite co-dimension in $\partial(\C[\g']^{\G'})$. 
Here $\partial(\C[\g']^{\G'})$ is the algebra of $\G'$-invariant constant-coefficient differential operators on $\g'$. 
Now Harish-Chandra Regularity Theorem \cite[Theorem 1, page 11]{HC-65InvariantEigendistributionsLieAlg} implies that the distribution \eqref{the main limit pro2'}
is a locally integrable function whose restriction to the set of the regular semisimple elements has a known structure. Specifically, Harish-Chandra's formula for the radial component of a $\G'$-invariant differential operator with constant coefficients on $\g'$ together with \cite[Lemma 19]{HC-64b} 
shows
that the restriction
\[
\pi_{\g'/\h'}\left(\tau'_*(\partial(Q)F(y))\right)^\wedge\,|_{\reg{\h'}}
\]
is annihilated by an ideal of finite co-dimension in $\partial(\C[\h'])$. Hence, for any connected component $C(\reg{\h'})\subseteq \reg{\h'}$ there is an exponential polynomial $\sum_j p_je^{\lambda_j}$ such that
\begin{equation}\label{the main limit pro2''}
\left(\tau'_*(\partial(Q)F(y))\right)^\wedge\,|_{C(\reg{\h'})}=\frac{1}{\pi_{\g'/\h'}}\sum_j p_je^{\lambda_j}\,;
\end{equation}
see e.g. \cite[Lemma 2, Appendix to 8.3.1]{WarnerII}.
Let
\[
p(x)=\sum_j p_j(x)e^{\lambda_j(x)} \qquad (x\in C(\reg{\h'}))\,.
\]
This function extends analytically beyond the connected component and for any $k=1,2,3,\dots$ we have Taylor's formula, as in \cite{Hormander}, 
\begin{equation}\label{the main limit pro2'''}
p(x)=\sum_{|\alpha|<k}\partial^\alpha p(0)\frac{x^\alpha}{\alpha!} +k\int_0^1(1-s)^{k-1}\sum_{|\alpha|=k}\partial^\alpha p(sx)\,ds\frac{x^\alpha}{\alpha!}\,.
\end{equation}
Since the distribution \eqref{the main limit pro2'} is tempered, the real parts of the $\lambda_j$ are non-positive on $C(\reg{\h'})$. Furthermore, the $\lambda_j$ depend linearly on $y$ and the $p_j$ depend polynomially on $y$. Therefore a straightforward argument shows that there is $N>0$ such that
\begin{equation}\label{the main limit pro2''''}
\left|\partial^\alpha p(tx)\right|\leq \text{constant}\cdot (1+|y|)^{N}(1+|x|)^{N}\sum_{|\alpha|=k}\left|\frac{x^\alpha}{\alpha!}\right|\,.
\end{equation}
%%
%Hence, as a tempered distribution on $\Ss(C(\reg{\h'_1}))$, the error term in \eqref{the main limit pro2'''} is bounded by a seminorm on that space times a distribution %homogeneous of degree $k$. 
Hence \eqref{the main limit pro2'} is a finite sum of homogeneous distributions, of possibly negative degrees, plus the error term which is bounded by \eqref{the main limit pro2''''}.
Thus
there is an integer $a$ such that the following limit exists in $\Ss'(\g')$:
\begin{eqnarray}\label{the main limit pro6}
\underset{t\to 0+}{\lim}\ t^aM_t^*\left(\tau'_*(\partial(Q)F(y))\right)^\wedge\,.
\end{eqnarray}
Moreover,  there is a seminorm $q$ on $\Ss(\g')$ and $N\geq 0$ such that
\begin{multline}\label{the main limit pro1''}
\left|t^aM_t^*\left(\tau'_*(\partial(Q)F(y))\right)\hat{}(\psi)\right|\leq (1+|y|)^N q(\psi)
\qquad (0<t\leq 1,\ y\in \h\cap \tau(\Wv),\ \psi\in \Ss(\g'))\,.
\end{multline}
By taking the inverse Fourier transform we see that 
there is an integer $b$ such that the following limit exists in $\Ss'(\g')$:
\begin{eqnarray}\label{the main limit pro7}
\underset{t\to 0+}{\lim}\ t^b M_{t^{-1}}^*\tau'_*(\partial(Q)F(y))\,.
\end{eqnarray}
Moreover,  there is a seminorm $q$ on $\Ss(\g')$ and $N\geq 0$ such that
\begin{multline}\label{the main limit pro1'''}
\left|\underset{t\to 0+}{\lim}\ t^b M_{t^{-1}}^*\tau'_*(\partial(Q)F(y))(\psi)\right|\leq (1+|y|)^N q(\psi)
 \qquad (0<t\leq 1,\ y\in \h\cap \tau(\Wv),\ \psi\in \Ss(\g'))\,.
\end{multline}
Notice that the following equivalent formulas hold:
\begin{align}\label{dilation relations 3}
(\psi\circ\tau')_t&=t^{2\,\dim\,\g'-\dim\,\Wv}\psi_{t^2}\circ\tau' \qquad (\psi\in \Ss(\g'))\,, 
\nn\\
\tau'_*(M_{t^{-1}}^* u)&=t^{\dim\Wv-2\dim\g'} M_{t^{-2}}^*\tau'_*(u) \qquad (u\in\Ss'(\Wv))\,.
\end{align}
The injectivity of the map $\tau'_*$, see Corollary \cite[(6)]{McKeePasqualePrzebindaWCestimates}, and (\ref{dilation relations 3}) imply that there is an integer $n$ such that the following limit exists in $\Ss'(\Wv)$,
\begin{eqnarray}\label{the main limit pro8}
\underset{t\to 0+}{\lim}\ t^n M_{t^{-1}}^*\partial(Q)F(y)\,.
\end{eqnarray}
Now Lemma \ref{the main localized to U lemma} shows that $n=\deg\mu_{\Oo_m}$ and the proposition follows.
\end{prof}
\section{\textbf{An integral over the slice through a nilpotent element}}
\label{An integral over the slice through a nilpotent element} 
\setcounter{thh}{0}
\subsection{\textbf{Normalization of measures}}
\label{Normalization of measures}
\setcounter{thh}{0}
Recall from section \ref{A slice through nilpotent element} the positive definite symmetric bilinear  form $B(\cdot,\cdot)=-\langle\theta\cdot,\cdot\rangle$ on  $\mathfrak s$. We normalize the Lebesgue measure on $\mathfrak s$ so that the volume of unit cube, defined in terms of $B(\cdot,\cdot)$, is $1$.

Let $\G^0\subseteq \G$ denote the connected component of the identity 
and set $-\G^0=\{-g ; g\in \G^0\}$.
Recall that for our compact group $\G$, the Cayley transform $c(x)=(x+1)(x-1)^{-1}$ maps $\g$ onto $-\G^0$. Notice that $\G=\G^0=-\G^0$ if $\G=\Ug_d$ or $\Sp_d$.
Set $r=\frac{2\dim_\R\g}{\dim_\R\Vv}$. Then, as checked in \cite[(3.11)]{PrzebindaUnipotent}, one may normalize the Haar measure on the group $\G$ so that 
\[
dc(x)=|\det_\R(1-x)|^{-r}\,dx \qquad (x\in \g)\,.
\]
The proof presented in \cite[(3.11)]{PrzebindaUnipotent} is valid for $\G\ne\Og_{2n+1}$. In the case $\G= \Og_{2n+1}$ a parallel argument works too. This is different than the normalization given in \cite[Theorem 1.14]{HelgasonGeomtric}.

Having normalized the measures, we may study the distributions on $\Wv$, $\g$ and $\G$ as ``generalized functions", in the sense that they are derivatives of continuous functions multiplied by the corresponding measures, as in \cite[section 6.3]{Hormander}.

\subsection{\textbf{Some geometry of the moment map}}
\label{Some geometry of the moment map}
Fix an element $N\in \ss1$ such that $w_N\in\Oo_m$, see Lemma \ref{structure of t'-1tau(0)}. Let $\G_N\subseteq \G$ be the stabilizer of $N$ and let $\g_N\subseteq \g$ be the Lie algebra of $\G_N$. Then we have a direct sum decomposition, orthogonal with respect to the form $B(\cdot,\cdot)$ of section \ref{A slice through nilpotent element},
\[
\g=\g_N\oplus \g_N^{\perp_B}\,.
\]
Recall the subspaces 
\[
[\so,N]^{\perp_B}\subseteq\ss1 \qquad\text{and}\qquad \Wv_{[\so,N]^{\perp_B}}\subseteq\Wv
\]
defined in \eqref{decomposition of ss1 with respect to N} and \eqref{the the set U}. 
Let $R_N\subseteq \Wv_{[\so,N]^{\perp_B}}$ denote the radical of the restriction of the symplectic form $\langle\cdot,\cdot\rangle$ to $\Wv_{[\so,N]^{\perp_B}}$, and let $\Wv_N\subseteq \Wv_{[\so,N]^{\perp_B}}$ denote the orthogonal complement of $R_N$ with respect to the form $B(\cdot,\cdot)$. Then 
either $\Wv_N=0$ or the restriction of the symplectic form
 $\langle\cdot,\cdot\rangle$ to $\Wv_N$ is non-degenerate and
\[
\Wv_{[\so,N]^{\perp_B}}=R_N\oplus \Wv_N\,.
\]
In the notation of the proof of Lemma \ref{lemma I.4}, we have
\begin{eqnarray*}
&&R_N=\left\{
\begin{pmatrix}
0 & 0\\
0 & 0\\
w_3 & w_6
\end{pmatrix};\ w_3\in \SHs_m(\Dc)\,,\ \ w_6\in M_{m,d-m}(\Bbb D)
\right\}\\
&&\Wv_N=\left\{
\begin{pmatrix}
0 & 0\\
0 & w_5\\
0 & 0
\end{pmatrix};\ w_5\in M_{d'-2m,d-m}(\Bbb D)
\right\}
\end{eqnarray*}
if $d>m$, and
\begin{eqnarray*}
&&R_N=\left\{
\begin{pmatrix}
0 \\
0 \\
w_3 
\end{pmatrix};\ w_3\in \SHs_m(\Dc)\,,\ \ w_6\in M_{m,d-m}(\Bbb D)
\right\} \\
&&\Wv_N=0
\end{eqnarray*}
if $d=m$. 

Suppose $d>m$. Then $\Wv_N=0$ if and only if $d'=2m$, i.e. $(\cdot,\cdot)'$ is split. 
Hence $\Wv_N\neq 0$ if and only if $\G'=\Ug_{p,q}$ with $p=m<q=m+(d'-2m)$. 

\begin{lemma}\label{geometryoftau}
The map
\begin{equation}\label{geometryoftau1}
R_N\ni v\to \tau(w_N+v)\in \g_N^{\perp_B}
\end{equation}
is an $\R$-linear bijection. The absolute value of the determinant of the matrix of this map defined in terms of any orthonormal basis is equal to 
\begin{equation}\label{geometryoftau2}
2^{\dim_\R\SHs_m(\Dc)+\frac{1}{2}\dim_\R M_{m,d-m}(\Bbb D)}
=2^{\frac{1}{2}\dim_\R\SHs_m(\Dc)}
2^{\frac{1}{2}\dim\g_N^{\perp_B}}\,.
\end{equation}
\end{lemma}
\begin{prof}
 An orthonormal basis of $R_N$ consists of the matrices
\begin{alignat*}{2}
&\frac{1}{\sqrt{2}}\left(E_{p,q}-E_{q,p}\right)\,, \quad &&1\leq p<q\leq m\,,\\
&E_{r,s}\,,  \quad &&m<r,s\leq d\,,
\end{alignat*}
if $\Bbb D=\R$, and of the matrices 
\begin{alignat*}{2}
&\frac{1}{\sqrt{2}}\left(E_{p,q}-E_{q,p}\right)\,,\ \ \ &&1\leq p<q\leq m\,,\\
&\gamma E_{p,p}\,,\ \ \ &&1\leq p\leq m\,,\\
&\frac{\gamma}{\sqrt{2}}\left(E_{p,q}+E_{q,p}\right)\,,\ \ \ &&1\leq p< q\leq m\,,\\
&\gamma' E_{r,s}\,,\ \ \ &&m<r,s\leq d\,,
\end{alignat*}
where $\gamma=i$, $\gamma'=1, i$ if $\Bbb D=\C$, and 
$\gamma=i, j, k$, $\gamma'=1, i, j, k$, if $\Bbb D=\Ha$. 

An orthonormal basis of $\g$ consists of the matrices
\[
\frac{1}{\sqrt{2}}\left(E_{p,q}-E_{q,p}\right)\,,\ \ \ 1\leq p<q\leq d\,,
\]
if $\Bbb D=\R$, and of the matrices
\begin{alignat*}{2}
&\frac{1}{\sqrt{2}}\left(E_{p,q}-E_{q,p}\right)\,,\ \ \ &&1\leq p<q\leq d\,,\\
&\gamma E_{p,p}\,,\ \ \ &&1\leq p\leq m\,,\\
&\frac{\gamma}{\sqrt{2}}\left(E_{p,q}+E_{q,p}\right)\,,\ \ \ &&1\leq p\leq q\leq d\,,
\end{alignat*}
where $\gamma=i$ if $\Bbb D=\C$, and $\gamma=i, j, k$, if $\Bbb D=\Ha$. 

As we have seen in \eqref{I.second eq}, the map \eqref{geometryoftau1} is given by the formula
\[
R_N\ni\left(
\begin{array}{lll}
0 & 0\\
0 & 0\\
w_3 & w_6
\end{array}
\right)\longrightarrow
\left(
\begin{array}{lll}
2w_3 & w_6\\
-\overline w_6^t & 0
\end{array}
\right)\in\g\,.
\]
Since
\[
\g_N=\left\{
\left(
\begin{array}{lll}
0 & 0\\
0 & x_{22}
\end{array}
\right);\ x_{22}=-\overline{x_{22}}^t\in M_{d-m}(\Bbb D)
\right\}
\]
and 
\[
\g_N^{\perp_B}=\left\{
\left(
\begin{array}{lll}
x_{11} & x_{12}\\
-\overline{x_{12}}^t & 0
\end{array}
\right);\ x_{11}=-\overline{x_{11}}^t\in M_{m}(\Bbb D)\,,\ x_{12}\in M_{m,d-m}(\Bbb D)
\right\}
\]
the $\R$-linearity and bijectivity of the map \eqref{geometryoftau1} follows.

Also, this map sends an element of our orthonormal basis contained in the $w_3$ block to $2$ times an element of the orthonormal basis contained in the $x_{11}$ block. Furthermore, it sends an element of our orthonormal basis contained in the $w_6$ block to $\sqrt{2}$ times an element of the orthonormal basis contained in the $\left(
\begin{array}{lll}
0 & x_{12}\\
-\overline{x_{12}}^t & 0
\end{array}
\right)$ block. Hence, \eqref{geometryoftau2} follows.
\end{prof}
\begin{lemma}\label{geometryoftau10}
Let $\tau_N:\Wv_N\to\g_N$ be the unnormalized moment map. Then
\begin{equation}\label{geometryoftau10.1}
\tau(w_N+v+w)=\tau(w_N+v)+\tau_N(w)\qquad (v\in R_N\,,\ w\in\Wv_N)\,,
\end{equation}
where $\tau(w_N+v)\in\g_N^{\perp_B}$. 
If $\Wv_N=0$, 
then the map \eqref{geometryoftau10.1} coincides with the map \eqref{geometryoftau1}.
\end{lemma}
\begin{prof}
This is immediate from the formulas \eqref{the the set U} and \eqref{I.second eq}.
\end{prof}

\subsection{\textbf{The integral as a distribution on $\g$}}
\label{The integral as a distribution on g}
Recall the character $\chi(t)=e^{2\pi i t}$, $t\in\R$, and the imaginary Gaussians
\begin{equation}\label{imaginarygaussian}
\chi_x(w)=\chi\Big(\frac{1}{4}\langle x w, w\rangle\Big)=\chi\Big(\frac{1}{4}\tr_{\Bbb D/\R}(x\tau(w))\Big)
\qquad (x\in\g\,,\ w\in \Wv)\,.
\end{equation}
As usual, by \eqref{w_x}, we can consider $\chi_x$ as a function on $\ss1$ by setting 
$$
\chi_x(y)=\chi_x(w_y) \qquad (y\in \ss1)\,.
$$
Fix an element $\t c(0)\in \wt{\Sp}(\Wv)$ lifting $c(0)=-1$. Since $\g$ is simply connected, there is a unique continuous (in fact real analytic) lift $\t c:\g\to \wt\G$ passing through $\t c(0)$. Then $\t c:\g_N\to \wt\G_N$. Since $\G$ is compact, the Cayley transform $c$ maps $\g$ onto the dense subset of $-\G^0$ consisting of the elements $g$ such that $\det(g-1)\neq 0$. 
The fixed normalization of the measure on $\G$ is so that on $\widetilde{c(\g)} 
\subseteq \widetilde{-\G^0}$
we have 
\[
d\t c(x)=d c(x) \qquad (x\in \g)\,.
\]
\begin{lemma}\label{distributionong}
Recall the slice $U=N+[\so,N]^{\perp_B}$ through $N$, \eqref{definition of U}. As a distribution on $\g$, 
\begin{equation}\label{distributionong1}
\int_{U}\chi_x(u) \,dx\,du=
C\delta_{\g_N^{\perp_B}}(x^{\perp_B}) \frac{\Theta_{\Wv_N}(\t c(0)\t c(x_N))}{\Theta_{\Wv_N}(\t c(0))
\Theta_{\Wv_N}(\t c(x_N))} \, dx_N \qquad (x\in \g)\,,
\end{equation}
where $C=2^{\frac{3}{2}\dim\g_N^{\perp_B}}2^{-\frac{1}{2}\dim_\R\SHs_m(\Dc)}$, $x=x^{\perp_B}+x_N$, $x^{\perp_B}\in \g_N^{\perp_B}$, $x_N\in\g_N$, $\delta_{\g_N^{\perp_B}}$ is Dirac delta at $0$ on ${\g_N^{\perp_B}}$, and $\Theta_{\Wv_N}$ is the character of the Weil representation of $\wt{\Sp}(\Wv_N)$ attached to the same character $\chi$. If $\Wv_N=0$ then 
$\Theta_{\Wv_N}=1$.
\end{lemma}
\begin{prof} 
We see from Lemma \ref{geometryoftau10} that
\[
\int_U \chi_x(u) \,dx\,du=
\int_{R_N}\chi_{x^{\perp_B}}(w_N+v)\,dx^{\perp_B} dv \int_{\Wv_N}\chi_{x_N}(w) \,dx_N\,dw\,.
\]
Lemma \ref{geometryoftau} implies that 
\begin{align*}
\int_{R_N}\chi_{x^{\perp_B}}(w_N+v)\,dx^{\perp_B}\,dv 
&=2^{-\frac{1}{2}\dim\g_N^{\perp_B}} 2^{-\frac{1}{2}\dim_\R\SHs_m(\Dc)}\int_{\g_N^{\perp_B}}\chi\Big(\frac{1}{4}\tr_{\Dc/\R}(yx^{\perp_B})\Big)\,dx^{\perp_B}\,dy \\
&=2^{-\frac{1}{2}\dim\g_N^{\perp_B}}2^{-\frac{1}{2}\dim_\R\SHs_m(\Dc)}\delta_{\g_N^{\perp_B}}\Big(\frac{1}{4}x^{\perp_B}\Big)\\
&=2^{\frac{3}{2}\dim\g_N^{\perp_B}}2^{-\frac{1}{2}\dim_\R\SHs_m(\Dc)}\delta_{\g_N^{\perp_B}}(x^{\perp_B})\,.
\end{align*}
Furthermore, by evaluating both sides of the equation \cite[(139)]{AubertPrzebinda_omega} at $w=0$ we see that
\[
\Theta_{\Wv_N}(\t c(0))
\Theta_{\Wv_N}(\t c(x_N))\int_{\Wv_N}\chi_{x_N}(w) \,dx_N\,dw=\Theta_{\Wv_N}(\t c(0)\t c(x_N))\, dx_N\,.
\]
Here we are using the convention on ``generalized functions'' we introduced in subsection \ref{Normalization of measures}\,. So, with the notation of \cite[(139)]{AubertPrzebinda_omega}, $t(\t c(x))(w)=\chi_x(w)$ and $[t(\t c(0)) \natural t(\t c(x_N))](w)=[1 \natural \chi_{x_N}](w)$, where $\natural$ denotes the twisted convolution on $\Wv_N$. Since $\chi_{x_N}$ is even, we conclude that 
$[t(\t c(0)) \natural t(\t c(x_N))](0)=[1 \natural \chi_{x_N}](0)=\int_{\Wv_N}\chi_{x_N}(w)\,dw$.
\end{prof}
\subsection{\textbf{The integral as a distribution on $\wt{-\G^0}$}}
\label{The integral as a distribution on G}
\ 
As in \cite[(138)]{AubertPrzebinda_omega}, we consider the embedding
\begin{equation}\label{mapT-1}
T:\wt{\Sp}(\Wv)\to \Ss'(\Wv)
\end{equation}
of the metaplectic group into the space of tempered distributions on the symplectic space. In particular,
\begin{equation}\label{mapT}
T(\t c(x))=\Theta(\t c(x))\chi_x(w) \; dw
\qquad (x\in\g\,,\ w\in \Wv)\,,
\end{equation}
where $\Theta$ denotes the character of the Weil representation of $\wt \Sp(\Wv)$ attached to the character $\chi$.

Suppose $\Wv_N\ne 0$.
The structure of our dual pair is such that the metaplectic covering 
\[
\wt{\Sp}(\Wv)\supseteq\wt\G\to\G\subseteq \Sp(\Wv)
\] 
restricts to the metaplectic covering 
\[
\wt{\Sp}(\Wv_N)\supseteq\wt\G_N\to\G_N\subseteq \Sp(\Wv_N)\,.
\] 
Indeed, $\G_N$ consists of the elements of $\G$ of the block diagonal form 
$
\begin{pmatrix}
I_m & 0 \\
0 & g
\end{pmatrix}
$. The dual pair $(\G_N,\G'_N)$ is of the same type as $(\G,\G')$, with 
$\G'_N$ consisting of elements of the form 
$
\begin{pmatrix}
I_m & 0 & 0\\
0 & g' & 0 \\
0 & 0 & I_m
\end{pmatrix}
$.
The dimension of the defining space $\Vv'_N$ of $\G'_N$ is $d'-2m$, which has the same parity as $d'$. The claim therefore follows from \cite[Appendix D]{McKeePasqualePrzebindaWCSymmetryBreaking}.

In particular we have an inclusion $\iota: \wt\G_N\to \wt\G$
and hence the pull-back of test functions  $\iota^*: C_c^\infty(\wt\G)\to C_c^\infty(\wt\G_N)$ and push-forward of distributions $\iota_*: \mathcal{D}'(\wt\G_N) \to \mathcal{D}'(\wt\G)$. 
By restriction, we get
\begin{equation}\label{pushforwardofdistr}
\iota_*:\mathcal{D}'(\wt{-\G_N^0})
\to \mathcal{D}'(\wt{-\G^0})\,.
\end{equation}

If $\Wv_N= 0$ and $d>m$ (and hence the form $(\cdot,\cdot)'$ is split), then we still have \eqref{pushforwardofdistr}, where the coverings are in $\wt{\Sp}(\Wv)$. It follows from \cite[Proposition 4.28]{AubertPrzebinda_omega} that in the above two cases, 
the formula
\begin{equation}\label{distributiononG1}
\chi_+(\t g)=\frac{\Theta(\t g)}{|\Theta(\t g)|} \qquad (\t g\in\wt\G)
\end{equation}
defines a group homomorphism $\chi_+:\wt\G\to \C^\times$, because there is a complete polarization of $\Wv$ preserved by $\G$. Indeed, such a polarization is $\Wv=\Xv\oplus \Yv$, where $\Xv$ and $\Yv$ are the spaces of the first $m$ rows and of the last $m$ rows of $\Wv$, respectively.  
In particular, $\chi_+$ restricts to a character of $\wt\G_N$.
Notice that $\chi_+$ is a character of $\wt\G$ whenever there is a polarization is $\Wv=\Xv\oplus \Yv$ such that
$\G$ preserves $\Xv$ and $\Yv$ to fit into \cite[Proposition 4.28]{AubertPrzebinda_omega}. This is always the case when the form $(\cdot,\cdot)'$ is split. 

If $\Wv_N= 0$ and $d=m$, then $\G_N=1$. In this case we artificially enlarge $\G_N$ to be the center $\Zg=\{1, -1\}$ of the symplectic group $\Sp(\Wv)$. Then $\wt \G_N=\wt \Zg$ and, as checked in \cite[(22)]{McKeePasqualePrzebindaWCSymmetryBreaking} the formula
\begin{equation}\label{distributiononG1.0}
\chi_+(\t g)=\frac{\Theta(\t g)}{|\Theta(\t g)|} \qquad (\t g\in\wt\G_N)
\end{equation}
defines a group homomorphism $\chi_+:\wt\G_N\to \C^\times$.
\begin{lemma}\label{distributiononG}
Suppose $\Wv_N=0$. 
Then,
as a distribution on $\wt{-\G^0}$, 
\begin{equation}\label{distributiononG-1}
\int_{U}T(\t g)(u)\,d\t g\,du=
C 2^{-\frac{1}{2}\dim \Wv}\iota_*(\chi_+(\t g_N)\,d\t g_N) \,,
\end{equation}
where $C=2^{\frac{3}{2}\dim\g_N^{\perp_B}}2^{-\frac{1}{2}\dim_\R\SHs_m(\Dc)}$  and $d\t g_N$ is the Haar measure on $\wt{-\G_N^0}$.
\end{lemma}
\begin{prof} 
We compute using  Lemma \ref{distributionong},
\begin{align*}
\int_{U}T(\t c(x))(u)\, d\t c(x)\,du
&=\Theta(\t c(x))\int_U \chi_x(u) |\det(1-x)|^{-r}\,dx\,du\\
&=C \Theta(\t c(x_N)) \delta_{\g_N^{\perp_B}}(x^{\perp_B}) |\det(1-x_N)|^{-r}\,dx_N\\
&=C \delta_{\g_N^{\perp_B}}(x^{\perp_B})\chi_+(\t c(x_N))|\Theta(\t c(x_N))| |\det(1-x_N)|^{-r}\,dx_N\\
&=C 2^{-\frac{1}{2}\dim\Wv}\delta_{\g_N^{\perp_B}}(x^{\perp_B})\chi_+(\t c(x_N))|\det(1-x_N)|^{\frac{d'}{2}-r}\,dx_N\\
&=C 2^{-\frac{1}{2}\dim\Wv} \delta_{\g_N^{\perp_B}}(x^{\perp_B})\chi_+(\t c(x_N))\,d\t c(x_N)\,,
\end{align*}
because (a straightforward computation shows that) $\frac{d'}{2}-r=\frac{2\dim_\R\g_N}{\dim_\R \Vv_N}$, where $\Vv_N\subseteq \Vv$ is the defining module for $\G_N$.
\end{prof}
\begin{lemma}\label{distributiononGWN}
Suppose 
$\Wv_N\ne 0$.
 (Equivalently, $d>m$ and the form $(\cdot,\cdot)'$ is not split.) 
 Then,
as a distribution on $\wt{-\G^0}$, 
\begin{equation}\label{distributiononGWN1}
\int_{U}T(\t g)(u)\,d\t g\,du=
C 2 ^{\frac{1}{2}\dim \Wv-\dim\Wv_N}\chi_+(\t c(0))^{-1}\iota_*(\Theta_{\Wv_N}(\t c(0)\t g_N)\,d\t g_N) \,,
\end{equation}
where $C=2^{\frac{3}{2}\dim\g_N^{\perp_B}}2^{-\frac{1}{2}\dim_\R\SHs_m(\Dc)}$ and $d\t g_N$ is the Haar measure on $\wt{-\G_N^0}$.
\end{lemma}
\begin{prof} 
We compute using Lemma \ref{distributionong},
\begin{multline*}
\int_{U}T(\t c(x))(u)\, d\t c(x)\,du= C\delta_{\g_N^{\perp_B}}(x^{\perp_B})\Theta(\t c(x)) 
\frac{\Theta_{\Wv_N}(\t c(0)\t c(x_N))}{\Theta_{\Wv_N}(\t c(0))
\Theta_{\Wv_N}(\t c(x_N))}|\det(1-x_N)|^{-r}\,dx\\
=C\delta_{\g_N^{\perp_B}}(x^{\perp_B})\frac{1}{\Theta(\t c(0))}
\frac{\Theta(\t c(0))}{\Theta_{\Wv_N}(\t c(0))} \frac{\Theta(\t c(x_N))}{\Theta_{\Wv_N}(\t c(x_N))} 
\Theta_{\Wv_N}(\t c(0)\t c(x_N))|\det(1-x_N)|^{-r}\,dx\,.
\end{multline*}
Notice that
\begin{equation}
\label{eq:finite valued}
\g_N\ni x_N\to\frac{\Theta(\t c(x_N))}{\Theta_{\Wv_N}(\t c(x_N))}\left|\frac{\Theta_{\Wv_N}(\t c(x_N))}{\Theta(\t c(x_N))}\right|
=\frac{\Theta(\t c(x_N))}{|\Theta(\t c(x_N))|}\frac{|\Theta_{\Wv_N}(\t c(x_N))|}{\Theta_{\Wv_N}(\t c(x_N))}
\in\C^\times
\end{equation}
is a continuous function taking values in a finite set. 
The latter property is a consequence of \cite[Proposition 4.28]{AubertPrzebinda_omega}:  
$\G$ may be considered as a subgroup of $\GL(\Xv)$, where $\Xv\oplus \Yv$ is the polarization of $\Wv$. 
Then $\chi_+(\t g)$ is written in terms of $\det_\Xv^{-1/2}(\t g)$, which can assume a finite set of values because the image of $\det_\Xv|_{\wt\G}$ is a compact subgroup of $\R^\times$. 
Hence \eqref{eq:finite valued} is constant, equal to its value at $0$, which is $1$. So
\[
\frac{\Theta(\t c(x_N))}{\Theta_{\Wv_N}(\t c(x_N))}=
\left|\frac{\Theta(\t c(x_N))}{\Theta_{\Wv_N}(\t c(x_N))}\right|\,.
\]
Therefore
\[
\frac{\Theta(\t c(0))}{\Theta_{\Wv_N}(\t c(0))}\frac{\Theta(\t c(x_N))}{\Theta_{\Wv_N}(\t c(x_N))}=
\left|\frac{\Theta(\t c(0))}{\Theta_{\Wv_N}(\t c(0))}\frac{\Theta(\t c(x_N))}{\Theta_{\Wv_N}(\t c(x_N))}\right|\,.
\]
The only dual pair that satisfies the assumptions of this Lemma is $(\G,\G')=(\Ug_d, \Ug_{m, m+(d'-2m)})$ with $d'-2m>0$. In terms of matrices, as in the proof of Lemma \ref{lemma I.4}, 
\[
\G_N=\Ug_{d-m}\,,\ \ \ \Wv_N=M_{d'-2m, d-m}\,.
\]
Hence,
\[
r=\frac{2\dim_\R\g}{\dim_\R \C^{d}}=d\,,\ \ \ r_N=\frac{2\dim_\R\g_N}{\dim_\R \C^{d-m}}=d-m
\]
and therefore
\[
\frac{d'}{2}-\frac{d'-2m}{2}-r=-r_N\,.
\]
Thus
\[
\left|\frac{\Theta(\t c(x_N))}{\Theta_{\Wv_N}(\t c(x_N))}\right||\det(1-x_N)|^{-r}=
\left|\frac{\Theta(\t c(0))}{\Theta_{\Wv_N}(\t c(0))}\right||\det(1-x_N)|^{-r_N}\,.
\]
Therefore
\begin{align*}
\int_{U} &T(\t c(x))(u)\, d\t c(x)\,du\\
&= C\delta_{\g_N^{\perp_B}}(x^{\perp_B}) 
\frac{1}{\chi_+(\t c(0))}\frac{1}{|\Theta(\t c(0))|}
\left|\frac{\Theta(\t c(0))}{\Theta_{\Wv_N}(\t c(0))}\frac{\Theta(\t c(x_N))}{\Theta_{\Wv_N}(\t c(x_N))}\right| \\
& \qquad \times\Theta_{\Wv_N}(\t c(0)\t c(x_N))|\det(1-x_N)|^{-r}\,dx_N\\
&=C\left|\frac{\Theta(\t c(0))}{\Theta_{\Wv_N}(\t c(0))^2}\right|
\delta_{\g_N^{\perp_B}}(x^{\perp_B}) 
\frac{1}{\chi_+(\t c(0))}
\Theta_{\Wv_N}(\t c(0)\t c(x_N))|\det(1-x_N)|^{-r_N}\,dx_N
\end{align*}
and the formula follows.
\end{prof}
\section{\textbf{Proof of the main theorem}}
\label{A limit of an intertwining distribution}
\setcounter{thh}{0}
Here we verify Theorem \ref{the dilation limit of intertwining distribution, intro}. We begin with an intermediate statement. Recall the connected identity component  $\G^0\subseteq \G$. Retain the notation of the previous subsection.
\begin{thm}\label{the dilation limit of intertwining distribution, intro, intermediate 1}
Let $\Pi$ be an irreducible representation of $\wt\G$ that occurs in the restriction of the Weil representation to $\wt\G$.
Then, in the topology of $\Ss'(\Wv)$,
\begin{equation}\label{the dilation limit of intertwining distribution, intro, intermediate 1.1}
t^{\deg \mu_{\Oo_m}}M_{t^{-1}}^* T(\check\Theta_{\Pi}|_{\wt{-\G^0}})\underset{t\to 0+}{\longrightarrow }K\mu_{\Oo_m}\,,
\end{equation}
where $K\ne 0$. 

Suppose $d=m$ or $d>m$ and $(\cdot,\cdot)'$ is split. (Equivalently, suppose $(\G,\G')$ is different from 
$(\Ug_d,\Ug_{m,d'-m})$ with $d'-2m>0$.) Then 
\begin{equation}\label{the dilation limit of intertwining distribution2}
K=C 2 ^{\frac{1}{2}\dim \Wv} \chi_{\Pi\otimes\chi_+^{-1}}(-1)\int_{\G_N^0}\Theta_{\Pi\otimes\chi_+^{-1}}( g_N)\,d g_N
\,,
\end{equation}
where $C$ is as in Lemma \ref{distributiononG} and $\chi_+$ is the character defined in \eqref{distributiononG1}. 
The integral in \eqref{the dilation limit of intertwining distribution2} is equal to
the multiplicity of  the trivial representation of $\G_N^0$ in the restriction of $\Pi\otimes\chi_+^{-1}$ to $\G_N^0$. 

Suppose $(\G,\G')=(\Ug_d,\Ug_{m,d'-m})$ with $d'-2m>0$. Then 
\begin{equation}\label{the dilation limit of intertwining distribution-Un case}
K=C 2^{\frac{1}{2}\dim \Wv-\dim \Wv_N} \chi_{\Pi}(\tilde{c}(0))
\int_{\G_N} \Theta_{\Pi}(\wt{g_N}^{-1})\Theta_{\Wv_N}(\wt{g_N})\,d g_N
\,,
\end{equation}
where $C$ is as above, $\G_N=\Ug_{d-m}$, and $\Theta_{\Wv_N}$ is the character of the Weil representation of $\wt{\Sp}(\Wv_N)$. The integral in \eqref{the dilation limit of intertwining distribution-Un case} is equal to the sum of multiplicities of the irreducible component of 
$\Pi|_{\wt{\G_N}}$ in the restriction of $\omega_N$ to $\wt{\G_N}$.
\end{thm}
Notice that if $\G=\Ug_d$ or $\Sp_d$ then $-\G^0=\G^0=\G$. Hence, in these cases, Theorem \ref{the dilation limit of intertwining distribution, intro, intermediate 1} is equivalent to Theorem \ref{the dilation limit of intertwining distribution, intro}.

\begin{prof}
We first prove that the limit in \eqref{the dilation limit of intertwining distribution, intro, intermediate 1.1} exists and is a constant multiple of $\mu_{\Oo_m}$. For this, we use the expression of $T(\check\Theta_{\Pi}|_{\widetilde{-\G^0}})$ in terms of Harish-Chandra's almost elliptic orbital integrals $F(y)\in \mathcal{S}'(\Wv)^\Sg$ determined in \cite{McKeePasqualePrzebindaWCSymmetryBreaking}. We need some additional notation. 
If $l\leq l'$, let $(J_1,\dots,J_l)$ be the basis of $\h$ introduced in \cite[(42)] {McKeePasqualePrzebindaWCSymmetryBreaking}. If $l>l'$, extend $\h$ to the Cartan subalgebra $\h(\g)$ of $\g$, with basis $(J_1,\dots,J_l)$ defined as in \cite[(45)] {McKeePasqualePrzebindaWCSymmetryBreaking}. Then $(J_1,\dots,J_{l'})$ is a basis of $\h$. We denote by $(y_1,\dots,y_l)$ (respectively, $(y_1,\dots,y_{l'})$) the coordinates of $y\in \h$ with respect to these bases. 
Let $(J^*_1,\dots,J^*_l)$ be the dual basis of $\h^*$ if $l\leq l'$  (respectively, of $\h(\g)^*$ if $l> l'$), and set $e_j=-iJ^*_j$ for $1\leq j\leq l$. The Harish-Chandra parameter 
$\mu=\sum_{j=1}^l \mu_j e_j$ of $\Pi$ is strictly dominant. In this paper, this means that $\mu_1>\mu_2>\dots>\mu_l$. 

For $1\leq j\leq l$ set
$$
a_j=-\mu_j+\delta-1 \quad \text{and} \quad b_j=-\mu_j+\delta-1\,,
$$ 
where
$$
\delta-1=\begin{cases}
l'-l &\text{if $\G=\Og_{2l}$}\\
l'-l-\frac{1}{2} &\text{if $\G=\Og_{2l+1}$}\\
\frac{l'-l-1}{2} &\text{if $\G=\Ug_{l}$}\\
l'-l-1 &\text{if $\G=\Sp_{2l}$.}
\end{cases}
$$ 
Furthermore, set $\beta=4\pi$ if $\G=\Sp_l$ and $\beta=2\pi$ otherwise. 

Suppose first that $l\leq l'$. Then, according to \cite[Theorem 2] {McKeePasqualePrzebindaWCSymmetryBreaking}, 
\begin{equation}\label{intertwining distribution l<=l'}
T(\check\Theta_{\Pi}|_{\widetilde{-\G^0}})(\phi)
=C\int_{\h\cap\tau(\Wv)}
\left(\prod_{j=1}^l  \left(p_j(y_j) +q_j(\partial_{y_j})\delta_0(y_j)\right)\right)\cdot F(y)(\phi)\, dy
\qquad (\phi\in \mathcal{S}(\Wv))\,,
\end{equation}
where 
$$
p_j(y)=P_{a_j,b_j}(-\beta y)e^{-\beta|y|}
\quad \text{and} \quad 
q_j(y)=\beta^{-1}Q_{a_j,b_j}(\beta^{-1} y)\,,
$$
and $P_{a_j,b_j}$ and $Q_{a_j,b_j}$ are polynomial functions on $(-\infty,0]$ and on $[0,+\infty)$. The explicit expression of 
$P_{a_j,b_j}$ and $Q_{a_j,b_j}$ does not play any role here, but one needs to notice that $P_{a_j,b_j}=0$ 
if $a_j\leq 0$ and $b_j\leq 0$ (i.e. if $|\mu_j|\leq \delta-1$), and in this case $Q_{a_j,b_j}\neq 0$.

The domain of integration $\h\cap\tau(\Wv)$ is described in \cite[Lemma 3.5]{McKeePasqualePrzebindaWCestimates}. It agrees with $\h$ unless $\G=\Ug_l$. If $\G=\Ug_l$, then
$\h\cap\tau(\Wv)$ is a union of closed orthants associated with the fixed basis $(J_1,\dots,J_l)$ of 
$\h$. In all cases, the right-hand-side of \eqref{intertwining distribution l<=l'} is the constant $C$ times a finite sum of integrals of the form 
\begin{equation}
\label{interwining distribution limit}
\int_{Y_I} \prod_{j\in I} p_j(y_j) \Big.\Big( \prod_{j\in I^c} q_j(\partial_{y_j}) F(y)(\phi)\Big)\Big|_
{y_{I^c}=0}\, dy_I\,,
\end{equation}
where $I=\{j_1,\dots,j_I\}$ is a (possibly empty) subset of $\{j\in \{1,2,\dots,l\}:p_j\neq 0\}$, $I^c=\{1,2,\dots,l\}\setminus I$, the integration domain is $Y_I=\prod_{j\in I} Y_j$ where $Y_j$ can be 
$(-\infty,0]$, $[0,+\infty)$ or $\R$, and $dy_I=dy_{j_1} \cdots dy_{j_l}$.

Proposition \ref{the main limit pro}, the exponential decay of the $p_j$'s in \eqref{interwining distribution limit} and the Lebesgue Dominated Convergence Theorem imply that
\begin{multline}\label{the dilation limit of intertwining distribution'}
\underset{t\to 0+}{\lim}\ t^{\deg \mu_{\Oo_m}}M_{t^{-1}}^* T(\check\Theta_{\Pi}|_{\widetilde{-\G^0}})\\
=\left(C\int_{\h\cap\tau(\Wv)}
\left(\prod_{j=1}^l  \left(p_j(y_j) +q_j(\partial_{y_j})\delta_0(y_j)\right)\right)\cdot F(y)|_U(\Bbb I_U)\, dy \right)\mu_{\Oo_m}\,.
\end{multline}

Suppose now that $l>l'$. According to \cite[Theorem 3] {McKeePasqualePrzebindaWCSymmetryBreaking}, 
\begin{equation}\label{intertwining distribution l>l'}
T(\check\Theta_{\Pi}|_{\widetilde{-\G^0}})(\phi)
=C\int_{\tau'(\reg{\hs1})}\Big(\prod_{j\in I_0}  p_j((s_0^{-1}y)_j)\Big)\cdot F(y)(\phi)\,dy\,
\qquad (\phi\in \mathcal{S}(\Wv))\,,
\end{equation}
where $s_0$ is a suitable element of $W(\G,\h(\g))$ 
and 
\begin{equation}
\label{J0}
I_0=\begin{cases} 
\{1,\dots, q\}\cup\{l-p+1,\dots,l\} &\text{if $\G=\Ug_l$}\\
\{1,\dots, l'\} &\text{otherwise}\,.
\end{cases}
\end{equation}
With respect to the fixed basis $(J_1,\dots,J_{l'})$ of $\h$, the integration domain $\tau'(\reg{\hs1})$
is a dense subset of the positive orthant. As in the case $l\geq l'$, Proposition \ref{the main limit pro}, the exponential decay of the $p_j$'s and the Lebesgue Dominated Convergence Theorem imply that
\begin{eqnarray}\label{the dilation limit of intertwining distribution''}
\underset{t\to 0+}{\lim}\ t^{\deg \mu_{\Oo_m}}M_{t^{-1}}^* T(\check\Theta_{\Pi}|_{\widetilde{c(\g)}})
=\left(C\int_{\tau'(\reg{\hs1})}\Big(\prod_{j\in J_0}  p_j((s_0^{-1}y)_j)\Big) F(y)|_U(\Bbb I_U) \,dy \right)\mu_{\Oo_m}\,.
\end{eqnarray}
Thus, in each case, the limit is a constant multiple of the measure $\mu_{\Oo_m}$. 
This constant is the term in parenthesis in \eqref{the dilation limit of intertwining distribution'} or in \eqref{the dilation limit of intertwining distribution''}. It is equal to 
\begin{equation}\label{the dilation limit of intertwining distribution1}
T(\check\Theta_{\Pi}|_{\wt{-\G^0}})|_U(\Bbb I_U)
=\int_U\int_{-\G^0}\Theta_{\Pi}(\t g^{-1})T(\t g)(u)\,dg\,du
\,.
\end{equation}
We need to prove that it is non-zero. 
%%

%%%
Suppose $d= m$ (stable range) or $d>m$ and the form $(\cdot,\cdot)'$ is split.
%Suppose the pair is in the stable range, with $\G$-the smaller member. 
Then Lemma \ref{distributiononG} implies that \eqref{the dilation limit of intertwining distribution1} is equal to
\[
C 2 ^{\frac{1}{2}\dim \Wv} \int_{-\G_N^0}\Theta_{\Pi}(\t g_N^{-1})\chi_+(\t g_N)\,dg_N\,.
\]
Furthermore,
\begin{align*}
\int_{-\G_N^0}\Theta_{\Pi}(\t g_N^{-1})\chi_+(\t g_N)\,d g_N
=\int_{-\G_N^0}\Theta_{\Pi\otimes\chi_+^{-1}}(\t g_N^{-1})\,d g_N
=\int_{-\G_N^0}\Theta_{\Pi\otimes\chi_+^{-1}}( g_N)\,d g_N
\,,
\end{align*}
where in the last formula $\Pi\otimes\chi_+^{-1}$ is viewed as a representation of $\G_N$. Thus
\begin{equation}\label{the dilation limit of intertwining distribution1.-1}
T(\check\Theta_{\Pi}|_{\widetilde{-\G^0}})|_U(\Bbb I_U)
=C 2 ^{\frac{1}{2}\dim \Wv}  \int_{-\G_N^0}\Theta_{\Pi\otimes\chi_+^{-1}}( g_N)\,d g_N\,.
\end{equation}
Since $-1$ is in the center of $\Sp(\Wv)$, it acts via multiplication by a scalar $\chi_{\Pi\otimes\chi_+^{-1}}(-1)$ on $\Pi\otimes\chi_+^{-1}$. Therefore
\[
\int_{-\G_N^0}\Theta_{\Pi\otimes\chi_+^{-1}}(g_N)\,d g_N=
\chi_{\Pi\otimes\chi_+^{-1}}(-1)\int_{\G_N^0}\Theta_{\Pi\otimes\chi_+^{-1}}( g_N)\,d g_N\,.
\]
Hence, \eqref{the dilation limit of intertwining distribution2} follows.

The integral in \eqref{the dilation limit of intertwining distribution2} is the multiplicity of the trivial representation of $\G_N^0$ in the restriction of $\Pi\otimes\chi_+^{-1}$ to $\G_N^0$. 
If $\G_N=\{1\}$, i.e. $d=m$, then this multiplicity is equal to the degree of $\Pi$. Otherwise, there are three cases:
\begin{alignat*}{3}
&\G=\Og_d\,,\qquad &&\G'=\Sp_{2m}(\R),\qquad &&\G_N=\Og_{d-m}\,,\\
&\G=\Sp_d\,,\; &&\G'=\Og^*_{2m}(\R),\; &&\G_N=\Sp_{d-m}\,,\\
&\G=\Ug_d\,,\; &&\G'=\Ug_{m,m},\; && \G_N=\Ug_{d-m}\,.
\end{alignat*}
Suppose that $\G=\Og_d$ or $\Sp_d$. Since $\Pi$ occurs in Howe's correspondence, by 
\cite[(A.4.2.1) and (A.6.2)]{PrzebindaInfinitesimal}, the highest weight of $\Pi\otimes\chi_+^{-1}$
is $\lambda=(\lambda_1,\dots, \lambda_m, 0,\dots, 0)$, where the last $d-m$ entries are equal to $0$ and $\lambda_1\geq \dots \geq \lambda_m\geq 0$ are integers.  We then recognize that 
the trivial representation of $\G_N^0$ occurs in the restriction of $\Pi\otimes\chi_+^{-1}$ to $\G_N^0$ by iterating the branching laws $\SO_n \downarrow \SO_{n-1}$ or $\Sp_n \downarrow \Sp_{n-1}$, see e.g. \cite[Theorems 9.16 and 9.18]{KnappLie-2ndEd}.
If $\G=\Ug_d$, then by 
\cite[(A.5.2)]{PrzebindaInfinitesimal}, the highest weight of $\Pi\otimes\chi_+^{-1}$
is $\lambda=(\mu_1,\dots, \mu_s, 0,\dots, 0, -\nu_r, \dots, -\nu_1)$, where 
$0\leq s \leq m$, $0\leq r \leq m$, $r+s\leq d$, and  
$\mu_1\geq \dots \geq \mu_s>0$ and $\nu_1\geq \dots \geq \mu_r>0$ are integers.
Notice that there are $d-(r+s)$ zero entries in the central part of $\lambda$. The 
highest weights of the irreducible representations occurring in the branching $\Ug_n\downarrow \Ug_{n-1}$ interleave $\lambda$, see e.g. \cite[Theorems 9.14]{KnappLie-2ndEd}.
Iterating these branching laws $m$ times therefore allows the highest weight of all zero entries. Hence the trivial representation of $\G_N^0$ occurs in the restriction of $\Pi\otimes\chi_+^{-1}$ to $\G_N^0$ in this case too.

Let us now consider the remaining cases, i.e. when $\Wv_N\ne 0$. Lemma \ref{distributiononGWN} implies that \eqref{the dilation limit of intertwining distribution1} is equal to
\[
C 2 ^{\frac{1}{2}\dim \Wv-\dim\Wv_N}\int_{-\G_N^0}\Theta_{\Pi}(\t g_N^{-1})
\Theta_{\Wv_N}(\t c(0)\t g_N)\,d g_N\,.
\]
Notice that
\begin{align*}
\int_{-\G_N^0}\Theta_{\Pi}(\t g_N^{-1})
\Theta_{\Wv_N}(\t c(0)\t g_N)\,dg_N
&=\int_{\G_N^0}\Theta_{\Pi}(\t c(0)\t g_N^{-1})
\Theta_{\Wv_N}(\t g_N)\,dg_N\\
&=\chi_\Pi(\t c(0))\int_{\G_N^0}\Theta_{\Pi}(\t g_N^{-1})\Theta_{\Wv_N}(\t g_N)\,dg_N
\,,
\end{align*}
where $\chi_\Pi$ is the central character of $\Pi$.

Notice that $\G_N$ is isomorphic to $\Ug_{d-m}$. 
Hence $\G_N^0=\G_N$ and the centralizer of $\G_N$ in $\Sp(\Wv_N)$ is compact, isomorphic to $\Ug_{d'-2m}$. Thus we have the dual pair $(\Ug_{d-m}, \Ug_{d'-2m})$ inside $\wt\Sp(\Wv_N)$.
The restriction $\Pi|_{\wt{\G_{N}}}$ decomposes into a finite sum of irreducibles and the integral
\begin{equation}
\label{non-zero-constant-WN non zero}
\int_{\wt{\G_N}}\Theta_{\Pi}(\t g_N^{-1})\Theta_{\Wv_N}(\t g_N)\,d\t g_N
\end{equation}
is the sum of the multiplicities of those irreducibles that occur in the restriction of $\omega_N$ to $\wt{\G_{N}}$. 
Again, looking at the highest weight $\lambda$ of $\Pi$, 
\cite[(A.5.2)]{PrzebindaInfinitesimal}, and the branching rules
$\Ug_n\downarrow \Ug_{n-1}$, e.g. \cite[Theorems 9.14]{KnappLie-2ndEd}, we see that the irreducible representation of $\wt{\Ug_{d-m}}$ whose highest weight has the central $d-m$ components of $\lambda$ is a representation of $\wt{\Ug_{d-m}}$ occurring in both the restriction of $\Pi$ and the restriction of $\omega_N$ to $\wt{\Ug_{d-m}}$. Thus the number \eqref{non-zero-constant-WN non zero} is not zero.
\end{prof}

Now we consider the dual pairs $(\G, \G')$ for which $-\G^0\ne \G$. They are isomorphic to $(\Og_d, \Sp_{2l'}(\R))$. More precisely, $\G\setminus (-\G^0)=\G^0$ if $\G=\Og_{2l+1}$, and $\G\setminus (-\G^0)=\G\setminus \G^0$ if $\G=\Og_{2l}$. Here $\G\setminus (-\G^0)$ is the complement of $-\G^0$ in $\G$.
We need to know how to compute 
\begin{equation}\label{oddExceptionalCase}
\underset{t\to 0+}{\lim}t^{\deg \mu_{\Oo_m}}M_{t^{-1}}^* T(\check\Theta_{\Pi}|_{\wt{\G\setminus(-\G^0)}})\,.
\end{equation}

Suppose $d=1$. Then $\G=\Og_1$ and $\G^0=\{1\}$. Hence  $T(\check\Theta_{\Pi}|_{\wt{\G^0}})=T(1)=\delta$.
Also, $\Oo_m=\Wv\setminus\{0\}$ and $\mu_{\Oo_m}$ is the Lebesgue measure. Hence $\deg \mu_{\Oo_m}=0$ and we see that \eqref{oddExceptionalCase} is equal to
\begin{equation}\label{oddExceptionalCasen=1}
\underset{t \to 0+}{\lim}t^{\deg \mu_{\Oo_m}}M_{t^{-1}}^* T(\check\Theta_{\Pi}|_{\wt{\G\setminus(-\G^0)}})=
\underset{t\to 0+}{\lim} M_{t^{-1}}^*\delta=\underset{t\to 0+}{\lim} t^{\dim\Wv}\delta=0\,.
\end{equation}

Assume from now on that $d>1$. As shown in \cite[section 4]{McKeePasqualePrzebindaWCSymmetryBreaking}, there is a symplectic subspace $\Wv_s\subseteq \Wv$ such that
the restriction of the dual pair $(\G, \G')$ to $\Wv_s$ is isomorphic to $(\Og_{d-1}, \Sp_{2l'}(\R))$ and the following statements hold, where $T_s$ is the map \eqref{mapT-1} for the dual pair $(\G_s, \G')$.
\begin{thm}
Let $(\G, \G')=(\Og_{2l+1}, \Sp_{2l'}(\R))$ with $l\geq 1$. 
Then for $\phi\in\Ss(\Wv)$
\begin{equation}\label{main theorem for l<l', special odd 1}
\int_{\G^0}\check\Theta_\Pi(\t g) T(\t g)(\phi)\,dg=\int_{\G_s^0}\check\Theta_\Pi(\t g)\det(1-g) T_s(\t g)(\phi^\G|_{\Wv_s})\,dg\,,
\end{equation}
where
\begin{equation}
\label{phiG}
\phi^\G(w)=\int_\G \phi(g.w) \, dw \qquad (w\in \Wv)\,.
\end{equation}
\end{thm}

\begin{thm}\label{theorem:main theorem for l<l', special odd 1}
Let $(\G, \G')=(\Og_{2l}, \Sp_{2l'}(\R))$ and assume that the character $\Theta_\Pi$ is not supported on 
$\wt{\G^0}$. 
Suppose that $1\leq l\leq l'$ and the pair $(\Og_2, \Sp_2(\R))$ is excluded. 
Then for all $\phi\in\Ss(\Wv)$
\begin{equation}\label{main theorem for l<l' a, special odd 1}
\int_{\G\setminus\G^0}\check\Theta_\Pi(\t g) T(\t g)(\phi)\,dg
=C(\Pi)
\int_{-\G_s^0}\check\Phi_{\Pi}(\t g) 
T_s(\t g)(\phi^\G|_{\Wv_s})\,dg\,,
\end{equation}
where $C(\Pi)$ is a constant equal to $\pm 1$, the function $\check\Phi_{\Pi}(\wt{-g})$ is a finite linear combination of irreducible characters of $\wt{\G_s^0}$, 
and $T_s$ is the map $T$, see \eqref{T}, corresponding to the symplectic space $\Wv_s$.
\smallskip

If $l>l'$, then $\int_{\G \setminus \G^0}\check\Theta_\Pi(\t g) T(\t g)\,dg=\int_{\G^0}\check\Theta_\Pi(\t g) T(\t g)\,dg$.

If $(\G,\G')=(\Og_2,\Sp_2(\R))$, then $\Theta_\Pi$ is not supported in $\wt{\G^0}=\wt{\SO}_2$ if and only if $\Pi=\nu^{-1}$ where $\nu(g,\xi)=\det(g)^{1/2}$ for $(g,\xi)\in \wt{\Og}_2$. In this case, 
$\int_{\G \setminus \G^0}\check\Theta_\Pi(\t g) T(\t g)\,dg=\int_{\G^0}\check\Theta_\Pi(\t g) T(\t g)\,dg$.
\end{thm}
%%
%%
%\begin{thm}\label{main theorem for l<l', special}
%Let $(\G, \G')=(\Og_{2l}, \Sp_{2l'}(\R))$. Assume that the character $\Theta_\Pi$ is not supported on 
%$\wt{\G^0}$. Then $l\leq l'$, the pair $(\Og_2, \Sp_2(\R))$ is excluded, and for $\phi\in\Ss(\Wv)$
%%%
%\begin{equation}\label{main theorem for l<l' a, special}
%\int_{\G\setminus \G^0}\check\Theta_\Pi(\t g) T(\t g)(\phi)\,dg\\
%=C(\Pi)\left(\frac{i}{2}\right)^{l'}
%\int_{\G_s^0}\check\Theta_{\Pi_s}(\t g) T_s(\t g)(\phi^\G|_{\Wv_s})\,dg\,,
%\end{equation}
%%%
%where $C(\Pi)=\pm 1$, $\Pi_s$ is the genuine irreducible representation of $\wt{\G_s^0}$ that occurs in the restriction of $\Pi$ to $\wt{\G_s^0}$ and has highest weight equal to the restriction of highest weight of $\Pi$ to the intersection of the Cartan subalgebra with $\g_s$.
%%%

%%
The following lemma will allow us to reduce the integral on the right hand-side of 
\eqref{main theorem for l<l', special odd 1} to a linear combination of integrals as on the right hand-side of 
\eqref{main theorem for l<l' a, special odd 1}. 
\begin{lemma}\label{FiniteCombinationIrredChar}
Suppose $(\G,\G')=(\Og_{2l+1},\Sp_{2l'}(\R))$. 
The function $\G_s \ni \t g\to \check\Theta_\Pi(\t g)\det(1-g)\in \C$ 
%\eqref{main theorem for l<l' a, special odd 1} 
is a finite linear combination of irreducible characters of $\wt\G_s$.
\end{lemma}
\begin{prof}
Let
$\sigma$ denote the spin representation of $\G_s^0$ and let $\sigma^c$ be its contragradient representation. Then, by \cite[Ch. XI, III, p. 254]{Littlewood}
\begin{equation}\label{charcter of spin WF}
\det(1+g)=|\Theta_\sigma(g)|^2=\Theta_{\sigma\otimes\sigma^c}(g)\qquad (g\in \G_s^0)\,.
\end{equation}
Recall that for $(\G, \G')=(\Og_d, \Sp_{2l'}(\R))$, $\chi_+$ is a character of $\wt\G$.
Write $\check\Theta_\Pi(\t g)\det(1-g)=\check\Theta_\Pi(\t g)\chi_+(\t g)\det(1-g)\chi_+^{-1}(\t g)$. 
Decomposing $(\Pi\otimes \chi_+^{-1})^c\otimes\sigma\otimes\sigma^c=\sum_j \sigma_j$ into a finite sum of irreducible representations $\sigma_j$ of $\G_s$, we then obtain
\[
\check\Theta_\Pi(\t g)\det(1-g)=\Theta_{\Pi\otimes\chi_+^{-1}}\det(1-g)\chi_+^{-1}(\t g)=\sum_j \Theta_{\sigma_j}(g)\chi_+^{-1}(\t g)= \sum_j \check\Theta_{\sigma_j^c\otimes \chi_+}(\t g)\,,
\]
where $\check\Theta_{\sigma_j^c}(\t g)=\check\Theta_{\sigma_j^c}(g)$. 
\end{prof}

Let $\Oo_{m,s}\subseteq \Wv_s$ denote the maximal nilpotent $\G_s\times \G'$ orbit with invariant measure $\mu_{\Oo_{m,s}}\in \Ss'(\Wv_s)$. 
\begin{lemma}\label{inequalityof degrees}
The sharp inequality
\begin{equation}\label{inequalityof degrees1}
\deg \mu_{\Oo_{m}}>\deg \mu_{\Oo_{m,s}} +(\dim\Wv-\dim\Wv_s)
\end{equation}
holds, unless the dual pair $(\G,\G')$ is isomorphic to $(\Og_d, \Sp_{2l'}(\R))$ with $d>l'$. In this cases
\begin{equation}\label{inequalityof degrees2}
\deg \mu_{\Oo_{m}}=\deg \mu_{\Oo_{m,s}}+(\dim\Wv-\dim\Wv_s)\,.
\end{equation}
\end{lemma}
\begin{prof}
We know from Lemmas \ref{structure of t'-1tau(0)} and \ref{muSNK as a tempered homogeneous distribution} that
\[
\deg \mu_{\Oo_m}=\dim \Oo'_m-\dim \Wv=2l'\min\{d,l'\}-\min\{d,l'\}(\min\{d,l'\}-1)-d\,2l'\,,
\]
and similarly
\begin{multline*}
\deg \mu_{\Oo_{m,s}}=\dim \Oo'_{m,s}-\dim \Wv_s\\
=2l'\min\{d-1,l'\}-\min\{d-1,l'\}(\min\{d-1,l'\}-1)-(d-1)2l'\,.
\end{multline*}
Suppose $d\leq l'$. Then
\[
\deg \mu_{\Oo_m}=2l'd-d(d-1)-d\,2l'=-d(d-1)
\]
and, because $d-1< l'$,
\[
\deg \mu_{\Oo_{m,s}}=-(d-1)(d-2)=-d(d-1)+2(d-1)\,.
\]
Also,
\[
\dim\Wv-\dim\Wv_s=d2l'-(d-1)2l'=2l'>2(d-1)\,.
\]
Thus \eqref{inequalityof degrees1} follows.

Suppose $d> l'$. Then
\[
\deg \mu_{\Oo_m}=2l'l'-l'(l'-1)-d2l'
\]
and, because $d-1\geq l'$,
\[
\deg \mu_{\Oo_{m,s}}=2l'l'-l'(l'-1)-(d-1)2l'=\deg \mu_{\Oo_m}+2l'\,.
\]
Thus \eqref{inequalityof degrees2} follows.
\end{prof}
\begin{lemma}\label{easy case}
Suppose $d=m$. Then
\[
\underset{t\to 0+}{\lim}t^{\deg \mu_{\Oo_m}}M_{t^{-1}}^* T(\check\Theta_{\Pi}|_{\wt{\G\setminus(-\G^0)}})=0\,.
\]
\end{lemma}
\begin{prof}
Recall that
\[
M_{t^{-1}}^* T(\check\Theta_{\Pi}|_{\wt{\G\setminus(-\G^0)}})(\phi)
=T(\check\Theta_{\Pi}|_{\wt{\G\setminus(-\G^0)}})(\phi_{t^{-1}})\,,
\]
where
\[
\phi_{t^{-1}}(w)=t^{\dim \Wv}\phi(tw)\,.
\]
Suppose first we are in the situation described in Theorem \ref{theorem:main theorem for l<l', special odd 1}. Then 
$T(\check\Theta_{\Pi}|_{\wt{\G\setminus(-\G^0)}})(\phi)$ is a constant multiple of 
$T_s(\check{\Phi}_{\Pi}|_{\wt{-\G_s^0}})(\phi|_{\Wv_s})$ because $-\G_s^0=\G_s^0$. Notice that
\[
(\phi|_{\Wv_s})_{t^{-1}}(w)=t^{\dim \Wv_s}(\phi|_{\Wv_s})(tw)
\]
and as above
\[
M_{t^{-1}}^* T_s(\check{\Phi}_{\Pi}|_{\wt{-\G_s^0}})(\phi|_{\Wv_s})
=T_s(\check{\Phi}_{\Pi}|_{\wt{-\G_s^0}})((\phi|_{\Wv_s})_{t^{-1}})\,.
\]
Hence 
the decomposition of $\Pi_s$ into irreducibles and Theorem \ref{the dilation limit of intertwining distribution, intro, intermediate 1} imply
that
\[
t^{\deg \mu_{\Oo_{m,s}}}M_{t^{-1}}^* T_s(\check{\Phi}_{\Pi}|_{\wt{-\G_s^0}})(\phi|_{\Wv_s})\underset{t\to 0+}{\longrightarrow }K_s\mu_{\Oo_{m,s}}\,,
\]
where $K_s$ is a non-zero constant. Thus, for a constant $C_s$,
\begin{eqnarray*}
&&\hskip -1cm t^{\deg \mu_{\Oo_m}}M_{t^{-1}}^* T(\check\Theta_{\Pi}|_{\wt{\G\setminus(-\G^0)}})(\phi)\\
&&=C_s\, t^{\deg \mu_{\Oo_m}-\dim\Wv+\dim\Wv_s}
M_{t^{-1}}^* T_s(\check{\Phi}_{\Pi}|_{\wt{-\G_s^0}})(\phi|_{\Wv_s})\\
&&=C_s\, t^{\deg \mu_{\Oo_m}-\dim\Wv+\dim\Wv_s-\deg \mu_{\Oo_{m,s}}} \left(t^{\deg \mu_{\Oo_{m,s}}}M_{t^{-1}}^* T_s(\check{\Phi}_{\Pi}|_{\wt{-\G_s^0}})(\phi|_{\Wv_s})\right)\\
&&\underset{t\to 0+}{\longrightarrow } C_s\cdot 0\cdot K_s\mu_{\Oo_{m,s}}=0
\end{eqnarray*}
because, by Lemma \ref{inequalityof degrees}, 
\[
\deg \mu_{\Oo_m}-\dim\Wv+\dim\Wv_s-\deg \mu_{\Oo_{m,s}}>0\,.
\]
Lemma \ref{FiniteCombinationIrredChar} implies that a similar argument applies to the case of Theorem \ref{main theorem for l<l', special odd 1}.
\end{prof}

Because of \eqref{oddExceptionalCasen=1} and Lemma \ref{easy case}, it remains to compute \eqref{oddExceptionalCase} for $(\Og_d,\Sp_{2l'}(\R))$ with $d>m=l'$. According to Theorem \ref{theorem:main theorem for l<l', special odd 1}, we can also suppose $l'\geq l$ and $(l',l)\neq (1,1)$ when $d=2l$. This leads us to the cases $2l+1>l'$ for $(\G,\G')=(\Og_{2l+1},\Sp_{2l'}(\R))$, and $2l>l'\geq l$, with $(l',l)\neq (1,1)$, for 
$(\G,\G')=(\Og_{2l},\Sp_{2l'}(\R))$.

\begin{lemma}\label{O(2l+1)Sp(2m)}
Suppose $(\G,\G')=(\Og_{2l+1},\Sp_{2l'}(\R))$ with $2l+1>l'$. Let $\Pi$ be an irreducible representation of $\wt\G$ that occurs in the restriction of the Weil representation to $\wt\G$. Then, in the topology of $\Ss'(\Wv)$,
\begin{equation}\label{the dilation limit of intertwining distribution, intro, intermediate 1.1 O(2l+1)Sp(2m)}
t^{\deg \mu_{\Oo_m}}M_{t^{-1}}^* T(\check\Theta_{\Pi}|_{\wt{\G^0}})\underset{t\to 0+}{\longrightarrow}K^+\mu_{\Oo_m}\,,
\end{equation}
where 
\begin{equation}\label{the dilation limit of intertwining distribution, intro, intermediate 1.1 O(2l+1)Sp(2m)-2}
K^+=|S^{2l}| C_s 2^{\frac{1}{2}\dim\Wv_s}\chi_{\Pi\otimes \chi_+^{-1}}(-1) \int_{(\G_s^0)_N} \Theta_{\Pi\otimes \chi_+^{-1}}(g^{-1}) \det(1+g)\,dg\,,
\end{equation}
and
$|S^{2l}|$ is the area of the unit sphere, $C_s$ is as in Lemma \ref{distributiononG} for the group $\G_s$ acting on $\Wv_s$, and $(\G_s^0)_N$ is the stabilizer of $N$ in $\G_s^0$. 
\end{lemma}
\begin{prof}
Recall the formula \eqref{main theorem for l<l' a, special odd 1}. 
We know from Lemma \ref{FiniteCombinationIrredChar} that $\check\Theta_\Pi(\t g)\det(1-g)$ is a finite linear combination of irreducible characters of $\wt\G_s$. Since $\G^0_s=-\G^0_s$, we apply the argument used in the proof of Theorem \ref{the dilation limit of intertwining distribution, intro, intermediate 1}, together with \eqref{inequalityof degrees2}, to each individual representation of $\wt\G_s$ and sum the results. This shows that for $\phi\in\Ss(\Wv)$,
\begin{equation}\label{the dilation limit of intertwining distribution, intro, intermediate 1.1 O(2l+1)Sp(2m)1}
t^{\deg \mu_{\Oo_m}}M_{t^{-1}}^* T(\check\Theta_{\Pi}|_{\wt{\G^0}})(\phi)\underset{t\to 0+}{\longrightarrow }{K_s}\mu_{\Oo_{m,s}}(\phi^\G|_{\Wv_s})\,,
\end{equation}
where $\mu_{\Oo_{m,s}}$ is the normalized  measure on the maximal nilpotent $\G_s\G'$-orbit $\Oo_{m,s}\subseteq \Wv_s$ and
\begin{equation}\label{the dilation limit of intertwining distribution, intro, intermediate 1.1 O(2l+1)Sp(2m)2}
K_s=C_s 2^{\frac{1}{2}\Wv_s}\chi_{\Pi\otimes \chi_+^{-1}}(-1) \int_{(\G_s^0)_N} \Theta_{\Pi\otimes \chi_+^{-1}}(g^{-1}) \det(1+g)\,dg\,.
\end{equation}
Since, by Corollary \ref{diagram10}
\[
\mu_{\Oo_{m,s}}(\phi^\G|_{\Wv_s})=|S^{2l}|\mu_{\Oo_{m}}(\phi)\,,
\]
\eqref{the dilation limit of intertwining distribution, intro, intermediate 1.1 O(2l+1)Sp(2m)} follows.
\end{prof}
\begin{lemma}\label{O(2l)Sp(2m)}
Suppose $(\G,\G')=(\Og_{2l},\Sp_{2l'}(\R))$ with $2l>l'\geq l$. Let $\Pi$ be an irreducible representation of $\wt\G$ that occurs in the restriction of the Weil representation to $\wt\G$ and whose character is not supported on $\wt{\G^0}$. Then, in the topology of $\Ss'(\Wv)$,
\begin{equation}\label{the dilation limit of intertwining distribution, intro, intermediate 1.1 O(2l)Sp(2m)}
t^{\deg \mu_{\Oo_m}}M_{t^{-1}}^* T(\check\Theta_{\Pi}|_{\wt{\G\setminus\G^0}})\underset{t\to 0+}{\longrightarrow }K^+\mu_{\Oo_m}\,,
\end{equation}
where $K^+=0$ if $2l=l'+1$ and 
\begin{multline}\label{the dilation limit of intertwining distribution, intro, intermediate 1.1 O(2l)Sp(2m)-2}
K^+=C(\Pi)
|S^{2l-1}||S^{2l-2}| C_{ss} 2^{\frac{1}{2}\dim\Wv_{ss}}
\chi_{\Pi\otimes \chi_+^{-1}}(-1)
\int_{(\G_{ss}^0)_N} \Theta_{\Pi\otimes \chi_+^{-1}}(g^{-1}) \det(1+g)\,dg
\end{multline}
for $2l>l'+1$. In \eqref{the dilation limit of intertwining distribution, intro, intermediate 1.1 O(2l)Sp(2m)-2}, 
$C(\Pi)=\pm 1$ and  $C_{ss}$ is as in Lemma \ref{distributiononG} for the group $\G_{ss}$, isomorphic to $\Og_{2l-2}$, acting on $\Wv_{ss}$. 
\end{lemma}
\begin{prof}
Formulas \eqref{main theorem for l<l' a, special odd 1} and \eqref{inequalityof degrees2} imply that
\begin{equation}\label{the dilation limit of intertwining distribution, intro, intermediate 1.1 O(2l)Sp(2m).1}
\underset{t\to 0+}{\lim}t^{\deg \mu_{\Oo_m}}M_{t^{-1}}^* T(\check\Theta_{\Pi}|_{\wt{\G\setminus \G^0}})(\phi)
=C(\Pi) 
\underset{t\to 0+}{\lim} t^{\deg \mu_{\Oo_{m,s}}}
M_{t^{-1}}^* T_s(\check\Theta_{\Pi_s}|_{\wt{\G_s^0}})(\phi^\G|_{\Wv_s})\,.
\end{equation}
Suppose first that $2l=l'+1$. The defining space of $\G_s$ has dimension $d_s=2l-1=l'$. Lemma \ref{easy case} applies then to the dual pair $(\G_s,\G')$, yielding 
$$
\underset{t\to 0+}{\lim} t^{\deg \mu_{\Oo_{m,s}}}
M_{t^{-1}}^* T_s(\check\Theta_{\Pi_s}|_{\wt{\G_s^0}})(\phi^\G|_{\Wv_s})=0.
$$
Suppose now that $2l>l'+1$. Then $2(l-1)+1=2l-1>l'$ and 
Lemma \ref{O(2l+1)Sp(2m)} shows that
\begin{equation}\label{the dilation limit of intertwining distribution, intro, intermediate 1.1 O(2l)Sp(2m).2}
\underset{t\to 0+}{\lim} t^{\deg \mu_{\Oo_{m,s}}}
M_{t^{-1}}^* T_s(\check\Theta_{\Pi_s}|_{\wt{\G_s^0}})(\phi^\G|_{\Wv_s}) 
=K^+_s \mu_{\Oo_{m,s}}(\phi^\G|_{\Wv_s})\,,
\end{equation}
where
\begin{equation}\label{the dilation limit of intertwining distribution, intro, intermediate 1.1 O(2l)Sp(2m).3}
K^+_s=|S^{2l-2}| C_{ss} 2^{1+\frac{1}{2}\dim\Wv_{ss}}\chi_{\Pi\otimes \chi_+^{-1}}(-1) \int_{(\G_{ss}^0)_N} \Theta_{\Pi\otimes \chi_+^{-1}}(g^{-1}) \det(1+g)\,dg\,,
\end{equation}
with  $C_{ss}$ as in Lemma \ref{distributiononG} for the group $\G_{ss}$, isomorphic to $\Og_{2l-2}$, acting on $\Wv_{ss}$.

Since, by Corollary \ref{diagram10}
\[
\mu_{\Oo_{m,s}}(\phi^\G|_{\Wv_{s}})=|S^{2l-1}|\mu_{\Oo_{m}}(\phi)\,,
\]
\eqref{the dilation limit of intertwining distribution, intro, intermediate 1.1 O(2l)Sp(2m)} follows.
\end{prof}
\begin{lemma}\label{theConstants}
Let $K$ be as in Theorem \ref{the dilation limit of intertwining distribution, intro, intermediate 1}.
With the notation and assumptions of Lemmas \ref{O(2l+1)Sp(2m)} and \ref{O(2l)Sp(2m)},
\begin{equation}\label{theConstants1}
K+K^+\ne 0\,.
\end{equation}
\end{lemma}
\begin{prof}
Recall that 
$$
K=C 2 ^{\frac{1}{2}\dim \Wv} \chi_{\Pi\otimes\chi_+^{-1}}(-1)\int_{\G_N^0}\Theta_{\Pi\otimes\chi_+^{-1}}( g_N)\,d g_N\,.
$$
In the situation of Lemma \ref{O(2l+1)Sp(2m)},
$$
K^+=|S^{2l}| C_s 2^{\frac{1}{2}\dim\Wv_s}\chi_{\Pi\otimes \chi_+^{-1}}(-1) \int_{(\G_s^0)_N} \Theta_{\Pi\otimes \chi_+^{-1}}(g^{-1}) \det(1+g)\,dg
\,.
$$
The constants $C$ and $C_{s}$ as well as both integrals are integers, and $\chi_{\Pi\otimes \chi_+^{-1}}(-1)=\pm 1$. Moreover, % (see for instance \cite[6.1.8 and 6.1.12]{AbramowitzStegun}), 
$$
|S^{2l}|=\frac{2\pi^{\frac{2l+1}{2}}}{\Gamma(l+\frac{1}{2})}=\frac{2\cdot 4^l l!}{(2l)!} \,\pi^l\,,
$$
which is an irrational number.
Hence \eqref{theConstants1} follows.

In the situation of Lemma \ref{O(2l)Sp(2m)}, with $2l>l'+1$, 
\begin{multline*}
K^+=C(\Pi)\left(\frac{i}{2}\right)^{l'}
|S^{2l-1}||S^{2l-2}| C_{ss} 2^{1+\frac{1}{2}\dim\Wv_{ss}}\\ \times\chi_{\Pi\otimes \chi_+^{-1}}(-1) 
\int_{(\G_{ss}^0)_N} \Theta_{\Pi\otimes \chi_+^{-1}}(g^{-1}) \det(1+g)\,dg,
\end{multline*}
where both $C_{ss}$ and the integral are integers. Since 
$$
|S^{2l-1}||S^{2l-2}|=\frac{2\pi^l}{(l-1)!} \, \frac{2\pi^{l-\frac{1}{2}}}{\Gamma((l-1)+\frac{1}{2})}=
4^l \frac{\pi^{2l-1}}{(2l-2)!}
$$
is irrational, \eqref{theConstants1} follows. Finally, if $2l=l'+1$, then $K+K^+=K\neq 0$.
\end{prof}

Now we easily deduce Theorem \ref{the dilation limit of intertwining distribution, intro} from Theorem \ref{the dilation limit of intertwining distribution, intro, intermediate 1} and Lemmas \ref{easy case} to \ref{theConstants}.

\section{\textbf{The wave front set of $\Pi'$}}
\label{WFSPI'}
\setcounter{thh}{0}
Recall from Theorem \ref{the dilation limit of intertwining distribution, intro}
that
\begin{equation}\label{noproofWF1}
t^{\deg \mu_\Oo}M_{t^{-1}}^*T(\check\Theta_\Pi) \underset{t\to 0+}\to C\,\mu_{\Oo_m}\,,
\end{equation}
as tempered distributions on $\Wv$, where $C$ is a non-zero constant. 
Hence,
in the topology of $\Ss'(\g')$,
\begin{equation}\label{noproofWF2}
t^{\dim{\Oo_m'}}M_{t^{2}}^*\widehat{\tau'_*( T(\check\Theta_\Pi))}\underset{t\to 0+}{\longrightarrow }C\widehat{\mu_{\Oo_m'}}\,,
\end{equation}
where $$\tau'_*( T(\check\Theta_\Pi))(\psi)=T(\check\Theta_\Pi)(\psi\circ\tau')\,,$$  $\widehat{\tau'_*( T(\check\Theta_\Pi))}$ is a Fourier transform of the tempered distribution $\tau'_*( T(\check\Theta_\Pi))$ on $\g'$, and similarly for $\mu_{\Oo_m'}$.

There is an easy-to-verify inclusion $WF(\Pi')\subseteq\overline{\Oo'}$, \cite[(6.14)]{PrzebindaUnipotent} and a formula for the character $\Theta_{\Pi'}$ in terms of $\widehat{\tau'_*(T(\check\Theta_\Pi))}$, namely,
\begin{equation}\label{noproofWF3}
\frac{1}{\sigma}\cdot \t c_-^*\Theta_{\Pi'}=\widehat{\tau'_*(T(\check\Theta_\Pi))}\,, 
\end{equation}
where $\sigma$ is a smooth function, \cite[Theorem 6.7]{PrzebindaUnipotent}.
By combining this with Lemma \ref{wave front set 1} one completes the argument.

%%%%%%%%%%%%%%%%%%
%%%%%     APPENDICES
%%%%%%%%%%%%%%%%%%
\appendix
\renewcommand{\thethh}{\Alph{section}.\fontindex{thh}}
\renewcommand{\theequation}{\Alph{section}.\fontindex{equation}}

\setcounter{thh}{0}
\setcounter{equation}{0}
\section{\textbf{Nilpotent orbits and moment maps}}
\label{Appendix nil-orbits}
\subsection{\textbf{Proof of Lemma \ref{structure of t'-1tau(0)}}\label{proof of lemma 1}}
The equality $w^*w=0$ means that the pullback of the form $(\cdot,\cdot)'$ via $w\in
\Wv=\Hom(\Vv,\Vv')$ is zero. 
Equivalently, the range of $w$ is an isotropic subspace of $\Vv'$. Let us fix a maximal isotropic subspace $\Xv'\subseteq \Vv'$. We may assume that the range of $w$ is contained in $\Xv'$. Thus $w\in \Hom(\Vv,\Xv')$. Under the action of $\G$ and $\GL(\Xv')$, the set $\Hom(\Vv,\Xv')$ breaks down into a union of orbits. Each orbit consists of  maps of rank $k\in\{0, 1, 2, \dots, m\}$.
Since by Witt's Theorem $\GL(\Xv')\subseteq \G'$ and since the action of $\G'$ cannot change the rank of an element of $\Hom(\Vv,\Vv')$, 
\eqref{structure of t'-1tau(0)0} will follow as soon as we compute the dimension of $\Oo_k$. We shall do it in terms of matrices.
We keep the notation introduced in section \ref{A slice through nilpotent element}. Let $F,F'$ be as in 
\eqref{matrix F.I} and choose
\begin{equation}\label{the nilpotent element a}
N=\begin{pmatrix}
I_k & 0\\
0 & 0\\
0 & 0
\end{pmatrix}
\end{equation}
as in \eqref{the nilpotent element}. 
The Lie algebra $\g$ consists of the skew-hermitian matrices of size $d$ with coefficients in $\Dc$ and $\g'$ of matrices of size $d'$ and coefficients in $\Dc$, described in $(k,d'-2k,k)$ block-form as
\[
x'=\begin{pmatrix}
x_{11} & x_{12} & x_{13}\\
x_{21} & x_{22} & -F'{}^{-1}\overline{x_{12}}^t\\
x_{31} & -\overline{x_{21}}^t F' & -\overline{x_{11}}^t
\end{pmatrix}\,,\ \ \ x_{13}=\overline{x_{13}}^t\,,\ \ \ x_{31}=\overline{x_{31}}^t\,,\ \ \ \overline{x_{22}}^tF'+F' x_{22}=0\,.
\]
The Lie algebra of the stabilizer of $N$ in $\G \times \G'$ consists of pairs of matrices $(x, x')\in\g\times\g'$ such that
\[
x=\begin{pmatrix}
y_{11} & 0\\
0 & y_{22}
\end{pmatrix}\,\ \ \ 
x'=\begin{pmatrix}
x_{11} & x_{12} & x_{13}\\
0 & x_{22} & -F'{}^{-1}\overline{x_{12}}^t\\
0 & 0 & -\overline{x_{11}}^t
\end{pmatrix}\,,\ \ \ x_{11}=y_{11}\,.
\]
This implies the dimension formula in \eqref{structure of t'-1tau(0)0}. 
Since 
\[
NN^*
=\left(
\begin{array}{cccc}
0 & 0 & I_k\\
0 & 0 & 0\\
0 & 0 & 0
\end{array}
\right)\,,
\]
the stabilizer of $NN^*$ in $\g'$ consists of matrices of the form
\[
\left(
\begin{array}{cccc}
x_{11} & x_{12} & x_{13}\\
0 & x_{22} & -F'{}^{-1}\overline{x_{12}}^t\\
0 & 0 & -\overline{x_{11}}^t
\end{array}
\right)\,,\ \ \ x_{11}=-\overline{x_{11}}^t\,,
\]
and (\ref{structure of t'-1tau(0)1}) follows.

\begin{remark}
The fact that, for $\G$ compact, $\tau^{-1}(0)$ is the closure of a single $\G\G'$-orbit and a finite union of $\G\G'$-orbits was proved in \cite[Lemma (2.16)]{PrzebindaUnipotent}. If in addition the pair $(\G,\G')$ is in the stable range with $\G$ the smaller member, then 
\cite[Lemma (2.19)]{PrzebindaUnipotent} also computes the dimension of the maximal orbit. So, Lemma \ref{structure of t'-1tau(0)} is a generalization of these statements. 

As for other references in the literature, notice that given a dual pair $(\G,\G')$, there are two moment maps one usually considers:
$$
\tau_{\g'}: \Wv\to {\g'}^*
\qquad \text{and} \qquad \tau_{\mathfrak{s}'}:\Wv\to {\mathfrak{s}'}^*\,,
$$
where $\g'=\k'\oplus\mathfrak{s}'$ is a Cartan decomposition and the second map is obtained from the first one by composing with the restriction to $\mathfrak{s}'$. 
The first map leads to $\G'$-orbits and the second to $\K'_\C$-orbits.

Our Lemma \ref{structure of t'-1tau(0)} deals with the maps $\tau_{\g'}$, whereas 
the articles \cite{Nishiyama-Zhu-Duke,Nishiyama-Zhu-Transactions} deal with the map $\tau_{\mathfrak{s}'}$ only. Therefore they do not provide any direct proof of Lemma \ref{structure of t'-1tau(0)}. Moreover, these references consider only dual pairs in the stable range. We do not have this assumption in our Lemma. 

Furthermore, these
two moment maps are sort of ``equivalent'' in the stable range as was shown in
\cite{DaszKrasPrzebindaK-S2}, but they are not ``equivalent'' beyond the stable range. 
\end{remark}

\subsection{\textbf{Proof of Lemma \ref{muSNK as a tempered homogeneous distribution}}}\label{proof of lemma 2}
Let $N\in \Oo_k$ as in \ref{proof of lemma 1}. The stabilizer of the image of $N$ in $\Vv'$ is a parabolic subgroup $\Pg'\subseteq \G'$ with Langlands decomposition $P'=\GL_k(\Dc)\G''\N'$, where $\G''$ is an isometry group of the same type as $\G'$ and $\N'$ is the unipotent radical. As a $\GL_k(\Dc)$--module, $\n'$, the Lie algebra of $\N'$, is isomorphic to $M_{k,d'-2k}(\Dc)\oplus \Hs_k(\Dc)$, where $\Hs_k(\Dc)\subseteq  M_{k}(\Dc)$ stands for the space of the hermitian matrices. 
In the notation of \ref{proof of lemma 1}, 
\begin{align*}
\n'&=\left\{ \begin{pmatrix}
0 & x_{12} & x_{13}\\
0 & 0 & -F'{}^{-1}\overline{x_{12}}^t\\
0 & 0 & 0
\end{pmatrix}; \, x_{12}\in M_{k,d'-2k}(\Dc), \, x_{13}\in \mathcal{H}_d(\Dc)\right\}\,,\\
\g''&=\left\{ \begin{pmatrix}
0 & 0 & 0\\
0 & x_{22} & 0\\
0 & 0 & 0
\end{pmatrix}; \, \overline{x_{22}}^tF'+F' x_{22}=0\right\}\,,\\
\GL_k(\Dc)&\equiv \left\{ \begin{pmatrix}
a & 0 & 0\\
0 & I_{d'-2k} & 0\\
0 & 0 & (\overline{a}^t)^{-1}
\end{pmatrix}; \, a\in \GL_k(\Dc)\right\}\,.
\end{align*}

Hence the absolute value of the determinant of the adjoint action of an element $a\in \GL_k(\Bbb D)$ on the real vector space $\n'$ is equal to 
\[
|\det_\R \Ad(a)_{\n'}|=|\det_\R(a)|^{d'-2k+\frac{2\dim_\R \Hs_k(\Bbb D)}{k\dim_\R\Bbb D}}\,.
\]
Since $\G'=\K'\Pg'$, where $\K'$ is a maximal compact subgroup, the Haar measure on $\G'$ may be written as
\[
dg'=|\det_\R \Ad(a)_{\n'}|\,dk\,da\,dg''\,dn'\,.
\]

Since the stabilizer of $N$ in $\G'$ is equal to $\G''\N'\subseteq \Pg'$, the $\G'$ orbit of 
$N$ defines a tempered distribution on $\Wv$ by
\[
\int_\Wv \phi(w)\,d\mu_{\G' N}(w)=\int_{\GL_k(\Bbb D)}\int_{\K'}\phi(kaN)|\det \Ad(a)_{\n'}|\,dk\,da \qquad (\phi\in \Ss(\Wv))\,.
\]
Since
$$
t \begin{pmatrix}
a & 0 & 0\\
0 & I_{d'-2k} & 0\\
0 & 0 & (\overline{a}^t)^{-1}
\end{pmatrix} 
\begin{pmatrix}
I_k & 0\\
0 & 0  \\
0 & 0
\end{pmatrix}
=
\begin{pmatrix}
ta & 0\\
0 & 0  \\
0 & 0
\end{pmatrix}
=
\begin{pmatrix}
ta & 0 & 0\\
0 & I_{d'-2k} & 0\\
0 & 0 & (\overline{ta}^t)^{-1}
\end{pmatrix} 
\begin{pmatrix}
I_k & 0\\
0 & 0  \\
0 & 0
\end{pmatrix}
$$
and for $t>0$
\begin{align*}
\int_\Wv \phi_t(w)\, d\mu_{\G' N}(w)
&=t^{-\dim\Wv}
\int_{\GL_k(\Bbb D)}\int_{\K'}\phi(k(t^{-1}a)N)|\det \Ad(a)_{\n'}|\,dk\,da\\
&= t^{-\dim\Wv}
\int_{\GL_k(\Bbb D)}\int_{\K'}\phi(kaN)|\det_\R(ta)|^{d'-2k+\frac{2\dim_\R \Hs_k(\Bbb D)}{k\dim_\R\Bbb D}}\,dk\,da\\
&=t^{-\dim\Wv +\Big(d'-2k+\frac{2\dim_\R \Hs_k(\Bbb D)}{k\dim_\R\Bbb D}\Big)k\dim_\R\Bbb D} \int_\Wv \phi(w)\, d\mu_{\G' N}(w)\,,
\end{align*}
this distribution is homogeneous of degree
\[
(d'-2k+\frac{2\dim \Hs_k(\Bbb D)}{k\dim_\R\Bbb D})\dim_\R\Bbb D -\dim\Wv\,.
\]
Thus it remains to check that
\[
(d'-2k+\frac{2\dim \Hs_k(\Bbb D)}{k\dim_\R\Bbb D})\,k \dim_\R\Bbb D=d'k \dim_\R(\Dc)-2\dim_\R\SHs_k(\Dc),
\]
which is easy, because $M_{k,k}(\Dc)=\Hs_k(\Bbb D)\oplus \SHs_k(\Dc)$. In order to conclude the proof we notice that the orbital integral on the $\G\times \G'$-orbit of $N$ is (up to a positive multiple) the $\G$-average of the orbital integral we just considered:
\[
\int_{\Wv} \phi(w)\,d\mu_{\Oo_k}(w)=\int_\G\int_{\GL_k(\Bbb D)}\int_{\K'}\phi(kaNg)\det \Ad(a)_{\n'}\,dk\,da\,dg\,.
\]
\subsection{\textbf{A few facts about nilpotent orbits}}\label{a few facts}
Let $\g'$ be a semisimple Lie algebra over $\C$. Then there is a unique non-zero nilpotent orbit in $\g'$ of minimal dimension which is contained in the closure of any non-zero nilpotent orbit, \cite[Theorem 4.3.3, Remark 4.3.4]{CollMc}. The dimension of that orbit is equal to one plus the number of positive roots not orthogonal to the highest root, relative to a choice of a Cartan subalgebra and a choice of positive roots,
\cite[Lemma 4.3.5]{CollMc}. Thus in the case $\g'=\sp_{2l'}(\C)$,  the dimension of the minimal non-zero nilpotent orbit is equal to $2l'$.
This is precisely the dimension of the defining module for the symplectic group $\Sp_{2l'}(\C)$, which may be viewed as the symplectic space for the dual pair $(\Og_1, \Sp_{2l'}(\C))$. 

Consider the dual pair $(\G, \G')=(\Og_1, \Sp_{2l'}(\R))$, with the symplectic space $\Wv$ and the unnormalized moment map $\tau':\Wv\to \g'$. 
Since $\Wv\setminus\{0\}$ is a single $\G'$-orbit, so is $\tau'(\Wv\setminus\{0\})$. Further, $\dim(\tau'(\Wv\setminus\{0\}))=\dim(\Wv)=2l'$.
Hence, $\tau'(\Wv\setminus\{0\})\subseteq \g'$ is a minimal non-zero $\G'$-orbit. In fact, there are only two such orbits, \cite[Theorem 9.3.5]{CollMc}. In terms of dual pairs, the second one is obtained from the same dual pair, with the symplectic form replaced by its negative (or equivalently the symmetric form on the defining module for $\Og_1$  replaced by its negative).

Consider an irreducible dual pair $(\G, \G')$ with $\G$ compact. Denote by $l$ the dimension of a Cartan subalgebra of $\g$ and by $l'$  the dimension of a Cartan subalgebra of $\g'$. 
Let us identify the corresponding symplectic space $\Wv$ with $\Hom(\V_{\overline 1}, \V_{\overline 0})$ as in \cite[sec.2]{PrzebindaUnipotent}. 

Recall that $\Wv_\g$ denotes the maximal subset of $\Wv$ on which the restriction of the unnormalized moment map $\tau:\Wv \to \g$ is a submersion. 
Then \cite[Lemma 2.6]{PrzebindaUnipotent} shows that $\Wv_\g$ consists of all the elements $w\in\Wv$ such that for any $x\in\g$,
\begin{equation}\label{xw=0 implies x=0}
xw=0\ \text{implies}\ x=0\,.
\end{equation}
The condition (\ref{xw=0 implies x=0}) means that $x$ restricted to the image of $w$ is zero. But in that case $x$ preserves the orthogonal complement of that image. Thus we need to know that the Lie algebra of the isometries of that orthogonal complement is zero. This happens if $w$ is surjective or if $\G$ is the orthogonal group and  the dimension of the image of $w$ in $\V_{\overline 0}$ is $\geq \dim(\V_{\overline 0})-1$. Thus
\begin{equation}\label{wg non-empty}
\Wv_\g\ne \emptyset\; \text{if and only if}\; l\leq l'\,.
\end{equation}
Consider in particular the dual pair $(\G, \G')=(\Og_3,\Sp_{2l'}(\R))$ with $1\leq l'$. We see from the above discussion that $\Wv_\g$ consists of elements of rank $\geq 2$. Hence, $\Wv\setminus(\Wv_\g\cup\{0\})$ consists of elements $w$ 
of rank equal to $1$. By replacing $\V_{\overline 0}$ with the image of $w$, we may consider $w$ as an element of the symplectic space for the pair $(\Og_1, \Sp_{2l'})$. Hence the image of $w$ under the moment map generates a minimal non-zero nilpotent orbit in $\g'$. 

If  $(\G, \G')=(\Og_2,\Sp_{2l'}(\R))$, with $1\leq l'$, then $\Wv_\g$ consists of elements of rank $\geq 1$. Therefore $\Wv\setminus\Wv_\g=\{0\}$.
%%

%% A different indexation for the appendix
%\setcounter{section}{2}
\setcounter{thh}{0}
\setcounter{equation}{0}
\section{\textbf{Pull-back of a distribution via a submersion}}

%%%
We collect here some textbook results which are attributed to Ranga Rao in \cite{BarVogAs}. These results date back to the time before the textbook \cite{Hormander} was available. 

We shall use the definition of a smooth manifold and a distribution on a smooth manifold as described in \cite[sec. 6.3]{Hormander}. Thus, if $M$ is a smooth manifold of dimension $m$ and 
\[
M\supseteq M_\kappa\overset{\kappa}{\longrightarrow}\widetilde{M}_\kappa\subseteq \R^m
\]
is any coordinate system on $M$,
then a distribution $u$ on $M$ is the collection of distributions $u_\kappa\in \mathcal D'(\widetilde{M}_\kappa)$ such that
\begin{equation}\label{charts and distributions}
u_{\kappa_1}=(\kappa\circ\kappa_1^{-1})^*u_\kappa\,.
\end{equation}
Suppose $W$ is another smooth manifold of dimension $n$ and $v$ is a distribution on $W$. Thus for any coordinate system 
\[
W\supseteq W_\lambda\overset{\lambda}{\longrightarrow}\widetilde{W}_\lambda\subseteq \R^n
\]
we have a distribution $v_\lambda\in \mathcal D'(\widetilde{W}_\lambda)$ such that the condition (\ref{charts and distributions}) holds. Suppose 
\[
\sigma:M\to W
\]
is a submersion. Then for every $\kappa$ there is a unique distribution $u_\kappa\in \mathcal D'(\widetilde{M}_\kappa)$ such that
\begin{equation}\label{definition of pullback of distributions on manifolds}
u_{\kappa}|_{(\lambda\circ \sigma\circ\kappa^{-1})^{-1}(\widetilde{W}_\lambda)}=(\lambda\circ \sigma\circ\kappa^{-1})^*v_\lambda\,.
\end{equation}
Since
\begin{eqnarray*}
(\kappa\circ\kappa_1^{-1})^*(\lambda\circ \sigma\circ\kappa^{-1})^*v_\lambda
=(\lambda\circ \sigma\circ\kappa^{-1}\circ\kappa\circ\kappa_1^{-1})^*v_\lambda
=(\lambda\circ\sigma\circ\kappa_1^{-1})^*v_\lambda\,,
\end{eqnarray*}
the $u_\kappa$ satisfy the condition (\ref{charts and distributions}). The resulting distribution $u$ is denoted by $\sigma^*v$ and is called the pullback of $v$ from $W$ to $M$ via $\sigma$.
\begin{proposition}\label{I.1}
Let $M$ and $W$ be smooth manifolds and let $\sigma:M\to W$ be a surjective submersion. Suppose $u_n\in \mathcal D'(W)$ is a sequence of distributions such that
\begin{equation}\label{I.1.1}
\underset{n\to\infty}{\lim}\ \sigma^*u_n=0 \quad \text{in the topology of $\mathcal D'(M)$}\,.
\end{equation}
Then
\begin{equation}\label{I.1.2}
\underset{n\to\infty}{\lim}\ u_n=0 \quad \text{in the topology of $\mathcal D'(W)$}\,.
\end{equation}
In particular, the map $\sigma^*:\mathcal D'(W)\to \mathcal D'(M)$ is injective.  

More generally, if
$u_n\in \mathcal D'(W)$ and $\t u\in \mathcal D'(M)$ are  such that
\begin{equation}\label{I.1.10}
\underset{n\to\infty}{\lim}\ \sigma^*u_n=\t u \quad \text{in the topology of $\mathcal D'(M)$}\,,
\end{equation}
then there is a distribution $u\in  \mathcal D'(W)$ such that
\begin{equation}\label{I.1.20}
\underset{n\to\infty}{\lim}\ u_n=u \quad \text{in the topology of $\mathcal D'(W)$}
\end{equation}
and $\t u=\sigma^* u$.
\end{proposition}
\begin{prof}
By the definition of a distribution on a manifold, as in \cite[sec.6.3]{Hormander}, we may assume that $M$ is an open subset of $\R^m$ and $W$ is an open subset of $\R^n$. 

We recall the definition of the pull-back
\begin{equation}\label{I.pullback} 
\sigma^*:\mathcal D'(W)\to \mathcal D'(M)
\end{equation}
from the proof of Theorem 6.1.2 in \cite{Hormander}. Fix a point $x_0\in M$ and a smooth map $g:M\to \R^{m-n}$ such that
\[
\sigma\oplus g:M\to \R^n\times \R^{m-n}
\]
has a bijective differential at $x_0$. By the Inverse Function Theorem there is an open neighborhood $M_0$ of $x_0$ in $M$ such that
\[
\left(\sigma\oplus g\right)|_{M_0}:M_0\to Y_0
\]
is a diffeomorphism onto an open neighborhood $Y_0$ of $(\sigma\oplus g)(x_0)=(\sigma(x_0), g(x_0))$ in $\R^n\times \R^{m-n}$.
Let 
\[
h:Y_0\to M_0
\]
denote the inverse. For $\phi\in C_c^\infty(M_0)$ define $\Phi\in C_c^\infty(Y_0)$ by 
\begin{equation}\label{Phiphi}
\Phi(y)=\phi(h(y))|\det\,h'(y)| \qquad (y\in Y_0)\,.
\end{equation}
Then
\begin{equation}\label{I.1.3}
\sigma^*u(\phi)=u\otimes 1(\Phi) \qquad (u\in \mathcal D'(W),\ \phi\in C_c^\infty(M_0))\,.
\end{equation}
By localization this gives the pull-back (\ref{I.pullback}).

Let $W_0$ be an open neighborhood of $\sigma(x_0)$ in $W$ and let $X_0$ be an open neighborhood of $g(x_0)$ in $\R^{m-n}$ such that
\[
W_0\times X_0\subseteq Y_0\,.
\]
Fix a function $\eta\in C_c^\infty(X_0)$ such that
\[
\int_{X_0}\eta(x)\,dx=1\,.
\]
Given $\psi\in C_c^\infty(W_0)$ define
\[
\Phi(x',x'')=\psi(x')\eta(x'') \qquad (x'\in W_0,\ x''\in X_0)\,.
\]
Then $\Phi$ defines $\phi$ via (\ref{Phiphi}) and
\[
\sigma^*u(\phi)=u(\psi)\,.
\]
Hence the assumption (\ref{I.1.1}) implies 
\[
\underset{n\to\infty}{\lim}\ u_n(\psi)=0 \qquad (\psi\in C_c^\infty(W_0))\,.
\]
Thus, by \cite[Theorem 2.1.8]{Hormander}, 
\[
\underset{n\to\infty}{\lim\ }u_n|_{W_0}=0
\]
in $\mathcal D'(W_0)$. Since the point $x_0\in M$ is arbitrary, the claim (\ref{I.1.2}) follows by localization.

Similarly,  the assumption (\ref{I.1.10}) implies  that for any $\psi\in C_c^\infty(W_0)$
\[
\underset{n\to\infty}{\lim}\ u_n(\psi)= \underset{n\to\infty}{\lim}\ \sigma^*u_n(\phi)=\t u(\phi) 
\]
exists. Thus, by \cite[Theorem 2.1.8]{Hormander}, there is $u\in \mathcal D'(W_0)$ such that
\[
\underset{n\to\infty}{\lim\ }u_n|_{W_0}=u\,.
\]
By the continuity of $\sigma^*$, $\sigma^* u=\t u$. Again, since the point $x_0\in M$ is arbitrary, the claim follows by localization.
\end{prof}
\begin{lemma}\label{pull-back of differential operators}
Let $M$ and $W$ be smooth manifolds and let $\sigma:M\to W$ be a surjective submersion. Then for any smooth differential operator $D$ on $W$ there is, not necessary unique, smooth differential operator $\sigma^*D$ on $M$ such that
\[
\sigma^*(u\circ D) =(\sigma^*u)\circ (\sigma^*D) \qquad (u\in \mathcal D'(W))\,.
\]
If $D$ annihilates constants then so does $\sigma^*D$. 
The operator $\sigma^*D$ is unique if $\sigma$ is a diffeomorphism.
\end{lemma}
\begin{prof}
Suppose $\sigma$ is a diffeomorphism between two open subsets of $\R^n$. Then
\[
\sigma^*u(\phi)=u(\phi\circ\sigma^{-1}|\det((\sigma^{-1})')|) \qquad (\phi\in C_c^\infty(M))\,.
\]
Let 
\[
(\sigma^*D)(\phi)=(D(\phi\circ\sigma^{-1}))\circ\sigma \qquad (\phi\in C_c^\infty(M))\,.
\]
Hence
\begin{align*}
\sigma^*(u\circ D)(\phi)&=(u\circ D)(\phi\circ\sigma^{-1}|\det((\sigma^{-1})')|)\\
&=u(D(\phi\circ\sigma^{-1}|\det((\sigma^{-1})')|))\\
&=u((D(\phi\circ\sigma^{-1})\circ\sigma)\circ\sigma^{-1}  |\det((\sigma^{-1})')|)\,.
\end{align*}

Using the local classification of the submersions modulo the diffeomorphism \cite[16.7.4]{DieudonneElements}, we may assume that $\sigma$ is a linear projection
\[
\sigma:\R^{m+n}\ni (x,y)\to x\in \R^n\,,
\]
in which case the lemma is obvious.
\end{prof}

Suppose $M$ is a Lie group. Then there are functions $m_\kappa\in C^\infty(\widetilde{M}_\kappa)$ such that the formula
\begin{equation}\label{haar measure locally}
\int_M \phi\circ \kappa(y)\,d\mu_M(y)=\int_{\tilde M_\kappa}\phi(x) m_\kappa(x)\,dx \qquad (\phi\in C_c^\infty(\widetilde{M}_\kappa))
\end{equation}
defines a left-invariant Haar measure on $M$. We shall tie the normalization of the Haar measure $d\mu_M(y)$ on $M$ to the normalization of the Lebesgue measure $dx$ on $\R^m$ by requiring that near the identity,
\begin{equation}\label{haar measure locally normalized}
m_{\exp^{-1}}(x)=\det\left(\frac{1-e^{-\ad(x)}}{\ad(x)}\right)\,,
\end{equation}
as in \cite[Theorem 1.14, page 96]{HelgasonGeomtric}. Collectively, the distributions $m_\kappa(x)\,dx\in \mathcal D'(\widetilde{M}_\kappa)$ form a distribution density on $M$. (See \cite[sec. 6.3]{Hormander} for the definition of a distribution density.) 

Suppose $W$ is another Lie group with left Haar measure given by
\[
\int_W \psi\circ \lambda(y)\,d\mu_W(y)=\int_{\tilde W_\lambda}\phi(x) w_\lambda(x)\,dx \qquad (\psi\in C_c^\infty(\widetilde{W}_\lambda))\,,
\]
and let $\sigma:M\to W$ be a submersion. 
Given any distribution density $v_\lambda\in  \mathcal D'(\widetilde{W}_\lambda)$ we associate to it a distribution on $W$ given by $\frac{1}{w_\lambda}v_\lambda\in  \mathcal D'(\widetilde{W}_\lambda)$. We may pullback this distribution to $M$ and obtain another distribution. Then we multiply by the $m_\kappa$ and obtain a distribution density. Thus, if $\sigma:M_\kappa\to W_\lambda$ then
\begin{equation}\label{pullback of distribution densities}
(\sigma^*v)_\kappa=m_\kappa (\lambda\circ \sigma\circ \kappa^{-1})^*(\frac{1}{w_\lambda}v_\lambda)\,.
\end{equation}
Distribution densities on $W$ are identified with the continuous linear forms on $C_c^\infty(W)$ by
\[
v(\psi\circ \lambda)=v_\lambda(\psi) \qquad (\psi\in C_c^\infty(\widetilde{W}_\lambda))\,.
\]
(Here $v$ stands for the corresponding continuous linear form.) In particular, if $F\in C(W)$, then $F\mu_W$ is a continuous linear form on $C_c^\infty(W)$ and for $\psi\in C_c^\infty(\widetilde{W}_\lambda)$,
\[
(F\mu_W)_\lambda(\psi)=(F\mu_W)(\psi\circ \lambda)=\int_W \psi\circ \lambda(y)F(y)\,d\mu_W(y)
=\int_{\tilde W_\lambda} \psi(x) (F\circ \lambda^{-1})(x) w_\lambda(x)\,dx\,. 
\]
Hence, for $\phi\in C_c^\infty(\widetilde{M}_\kappa)$, with $\sigma:M_\kappa\to W_\lambda$,
\begin{align*}
(\sigma^*(F\mu_W))_\kappa(\phi)&=(\lambda\circ \sigma\circ \kappa^{-1})^*(\frac{1}{w_\lambda}(F \mu_W)_\lambda)(m_\kappa \phi)\\
&=\int_{\tilde M_\kappa} m_\kappa (x) \phi(x) F\circ\lambda^{-1}\circ (\lambda \circ \sigma\circ \kappa^{-1})(x)\,dx\\
&=\int_{\tilde M_\kappa} \phi(x) (F \circ \sigma)\circ \kappa^{-1}(x)m_\kappa (x) \,dx\\
&=\int_M \phi\circ \kappa(y) (F \circ \sigma)(y) \,d\mu_M(y)\,.
\end{align*}
Thus
\begin{equation}\label{pullback of function times Haar}
\sigma^*(F\mu_W)=(F\circ \sigma) \mu_M\,.
\end{equation}
As explained above, we identify $\mathcal D'(M)$ with the space of the continuous linear forms on $C_c^\infty(M)$ and similarly for $W$ and obtain
\begin{equation}\label{final pullback}
\sigma^*:\mathcal D'(M)\to \mathcal D'(W)
\end{equation}
as the unique continuous extension of (\ref{pullback of function times Haar}). Our identification of distribution densities with continuous linear forms on on the space of the smooth compactly supported functions applies also to submanifolds of Lie groups.

Let $\Sg$ be a Lie group acting on another Lie group $W$ and let $U\subseteq W$ be a submanifold. (In our applications $W$ is going to be a vector space.) We shall consider the following function
\begin{equation}\label{the map f}
\sigma:\Sg\times U\ni (s,u)\to s.u\in W\,.
\end{equation}
The following fact is easy to check.
\begin{lemma}\label{intersetion of orbit with U}
If $\mathcal O\subseteq W$ is an $\Sg$-orbit then $\sigma^{-1}(\mathcal O)=\Sg\times (\mathcal O\cap U)$. 
\end{lemma}
Assume that the map (\ref{the map f}) is submersive. Let us fix Haar measures on $\Sg$ and on $W$ so that the pullback
\[
\sigma^*:\mathcal D'(W)\to \mathcal D'(\Sg\times U)
\]
is well defined, as in (\ref{final pullback}).
Denote by $\Sg^U\subseteq \Sg$ the stabilizer of $U$.
\begin{lemma}\label{main pullback lemma}
Assume that the map (\ref{the map f}) is submersive and surjective. Let
$\mathcal O\subseteq W$ be an $\Sg$-orbit and let $\mu_{\mathcal O}\in \mathcal D'(W)$ be an $\Sg$-invariant positive measure supported on the closure on $\mathcal O$. Let $\mu_{\mathcal O}|_{U}\in \mathcal D'(U)$ be the restriction of $\mu_{\mathcal O}$ to $U$ in the sense of \cite[Cor. 8.2.7]{Hormander}. Then $\mu_{\mathcal O}|_{U}$ is a positive $\Sg^U$-invariant measure supported on the closure of $\mathcal O\cap U$ in $U$. Moreover, 
%there is a left invariant Haar measure $\mu_\Sg$ on $\Sg$ such that 
%%
\begin{equation}\label{main pullback lemma1}
\sigma^*\mu_{\mathcal O}=\mu_\Sg\otimes \mu_{\mathcal O}|_{U}\,.
\end{equation}
\end{lemma}
\begin{prof}
Let $s\in\Sg^U$. Then
\[
s^*\left(\mu_{\mathcal O}|_{U}\right)=\left(s^*\mu_{\mathcal O}\right)|_{U}=\mu_{\mathcal O}|_{U}\,.
\]
Hence the distribution $\mu_{\mathcal O}|_{U}$ is $\Sg^U$-invariant. Lemma \ref{I.1} implies that $\mu_{\mathcal O}|_{U}\ne 0$ and Lemma \ref{intersetion of orbit with U} that $\mu_{\mathcal O}|_{U}$ is supported in the closure of $\mathcal O\cap U$ in $U$. Since the pullback of a positive measure is a non-negative measure, $\mu_{\mathcal O}|_{U}$ is a positive $\Sg^U$-invariant measure supported on the closure of $\mathcal O\cap U$ in $U$.

Theorem 3.1.4' in \cite{Hormander} implies that there is a positive measure $\mu_{\mathcal O\cap U}$ on $U$ such that 
\[
\sigma^*\mu_{\mathcal O}=\mu_\Sg\otimes \mu_{\mathcal O\cap U}\,.
\]
Consider the embedding
\[
\sigma_1:U\ni u\to (1,u)\in \Sg\times U\,.
\]
Then $\sigma\circ \sigma_1:U\to W$ is the inclusion of $U$ into $W$. Hence,
\[
(\sigma\circ \sigma_1)^*\mu_{\mathcal O}=\mu_{\mathcal O}|_{U}\,.
\]
The conormal bundle to $\sigma_1$, as defined in \cite[Theorem 8.2.4]{Hormander}, is equal to
\[
N_{\sigma_1}=T^*(\Sg)\times 0|_{\{1\}\times U}^* \subseteq T^*(\Sg)\times 0 \subseteq T^*(\Sg\times U)\,.
\]
By the $\Sg$-invariance of $\sigma^*\mu_{\mathcal O}$, 
\[
WF(\mu_\Sg\otimes \mu_{\mathcal O\cap U})\subseteq 0\times T^*(U)\subseteq T^*(\Sg\times U)\,.
\]
Hence 
\[
N_{\sigma_1}\cap WF(\mu_\Sg\otimes \mu_{\mathcal O\cap U})=0\,.
\]
Therefore
\[
\mu_{\mathcal O}|_{U}=(\sigma\circ \sigma_1)^*\mu_{\mathcal O}=\sigma_1^*\circ \sigma^*\mu_{\mathcal O}=\sigma_1^*(\mu_\Sg\otimes \mu_{\mathcal O\cap U})=\mu_{\mathcal O\cap U}\,.
\]
This implies (\ref{main pullback lemma1}).
\end{prof}
%%

%% A different indexation for the appendix
\setcounter{thh}{0}
\setcounter{equation}{0}
\section{\textbf{Wave front set of an asymptotically homogeneous distribution}}\label{Appendix WF}
Let
\[
\mathcal F f(x)=\int_{\R^n}f(y)e^{-2\pi ix\cdot y}\,dy
\]
denote the usual Fourier transform on $\R^n$. Recall that for $t>0$ the function $M_t:\R^n \to \R^n$ is defined by $M_t(x)=tx$.
\begin{lemma}\label{wave front set 1}
Suppose $f, u\in\Ss'(\R^n)$, $u$ is homogeneous of degree $d\in \C$, and
\begin{equation}\label{wave front set 1.1}
t^dM_{t^{-1}}^*f (\psi)\underset{t\to 0+}{\longrightarrow }u (\psi) \qquad (\psi\in \Ss(\R^n))\,.
\end{equation}
Then
\begin{equation}\label{wave front set 1.2}
WF_0(\mathcal F^{-1} f)\supseteq \supp u\,.
\end{equation}
\end{lemma}
\begin{prof}
Suppose $\Phi\in C_c^\infty(\R^n)$ is such that $\Phi(0)\ne 0$. We need to show that the localized Fourier transform
\[
\mathcal F((\mathcal F^{-1} f) \Phi)
\]
is not rapidly decreasing in any open cone $\Gamma$ which has a non-empty intersection with $\supp u$. (See \cite[Definition 8.1.2]{Hormander}.) In order to do it, we will choose a function $\psi\in C_c^\infty(\Gamma)$ such that $u(\psi)\ne 0$ and show that
\begin{eqnarray}\label{wave front set 1.3}
\int_{\R^n}(t^{-1})^{-d}\mathcal F((\mathcal F^{-1} f) \Phi)(t^{-1}x)\psi(x)\,dx
\underset{t\to 0+}{\longrightarrow } u(\psi)\,,
\end{eqnarray}
assuming $\Phi(0)=1$.
Let $\phi=\mathcal F \Phi$. Then $\int_{\R^n} \phi(x)\,dx=1$. Notice that
\begin{eqnarray}\label{wave front set 1.4}
t^dM_{t^{-1}}^*(f*\phi)=(t^dM_{t^{-1}}^*f)*(t^{-n}M_{t^{-1}}^*\phi)\,,
\end{eqnarray}
so that, by setting $\check \psi(x)=\psi(-x)$, we have
\begin{multline}\label{wave front set 1.5}
\int_{\R^n}(t^{-1})^{-d}\mathcal F((\mathcal F^{-1} f) \Phi)(t^{-1}x)\psi(x)\,dx\\
=t^dM_{t^{-1}}^*(f*\phi)*\check\psi(0)=(t^dM_{t^{-1}}^*f)*\left((t^{-n}M_{t^{-1}}^*\phi)*\check\psi\right)(0)\,.
\end{multline}
We will check that for an arbitrary $\psi\in\Ss(\R^n)$
\begin{eqnarray}\label{wave front set 1.6}
(t^{-n}M_{t^{-1}}^*\phi)*\psi\underset{t\to 0+}{\longrightarrow }\psi
\end{eqnarray}
in the topology of $\Ss(\R^n)$. This, together with (\ref{wave front set 1.5}) and Banach-Steinhaus Theorem, \cite[Theorem 2.6]{RudinFunc}, will imply (\ref{wave front set 1.3}). Explicitly,
\begin{eqnarray}\label{wave front set 1.7}
\big((t^{-n}M_{t^{-1}}^*\phi)*\psi\big)(x)-\psi(x)=\int_{\R^n}\phi(y)(\psi(x-ty)-\psi(x))\,dy\,.
\end{eqnarray}
Fix $N=0, 1, 2, \dots$ and $\epsilon>0$. Choose $R>0$ so that
\begin{eqnarray}\label{wave front set 1.8}
\int_{|y|\geq R}|\phi(y)|\,dy \cdot \left((1+|y|)^N+1\right)\,\sup_{x\in\R^n} (1+|x|)^N|\psi(x)|<\epsilon\,.
\end{eqnarray}
Let $0<t\leq 1$. Then
\begin{eqnarray}\label{wave front set 1.9}
&&\hskip -2cm (1+|x|)^N\int_{|y|\geq R}|\phi(y)||\psi(x-ty)|\,dy \\
&\leq& \int_{|y|\geq R}|\phi(y)|(1+|ty|)^N(1+|x-ty|)^N|\psi(x-ty)|\,dy\nn\\
&\leq& \int_{|y|\geq R}|\phi(y)|(1+|y|)^N\,dy\cdot \sup_{x\in\R^n}(1+|x|)^N|\psi(x)|\nn
\end{eqnarray}
and 
\begin{eqnarray}\label{wave front set 1.10}
(1+|x|)^N\int_{|y|\geq R}|\phi(y)||\psi(x)|\,dy\leq \int_{|y|\geq R}|\phi(y)|\,dy\cdot \sup_{x\in\R^n}(1+|x|)^N|\psi(x)|
\end{eqnarray}
so that, by (\ref{wave front set 1.8}),
\begin{eqnarray}\label{wave front set 1.11}
(1+|x|)^N\left|\int_{|y|\geq R}\phi(y)(\psi(x-ty)-\psi(x))\,dy\right|<\epsilon \qquad (0<t\leq 1,\ x\in\R^n)\,.
\end{eqnarray}
Choose $r>0$ so that
\begin{eqnarray}\label{wave front set 1.12}
(1+|x|)^N\left|\int_{|y|\leq R}\phi(y)(\psi(x-ty)-\psi(x))\,dy\right|<\epsilon \qquad (0<t\leq 1,\ |x|\geq r)\,.
\end{eqnarray}
Since the function $\psi$ is uniformly continuous,
\begin{eqnarray}\label{wave front set 1.13}
\underset{t\longrightarrow 0+}{\rm limsup}\sup_{|x|\leq r}\left|\int_{|y|\leq R}\phi(y)(\psi(x-ty)-\psi(x))\,dy\right|=0\,.
\end{eqnarray}
Hence,
\begin{eqnarray}\label{wave front set 1.14}
\underset{t\longrightarrow 0+}{\rm limsup}\sup_{x\in\R^n}(1+|x|)^N\left|\int_{|y|\leq R}\phi(y)(\psi(x-ty)-\psi(x))\,dy\right|\leq \epsilon\,.
\end{eqnarray}
By combining (\ref{wave front set 1.11}) and (\ref{wave front set 1.14}), we see that
\begin{eqnarray}\label{wave front set 1.15}
\underset{t\longrightarrow 0+}{\rm limsup}\sup_{x\in\R^n}(1+|x|)^N\left|\int_{\R^n}\phi(y)(\psi(x-ty)-\psi(x))\,dy\right|\leq 2\epsilon\,.
\end{eqnarray}
Since the $\epsilon>0$ is arbitrary, (\ref{wave front set 1.15}) and (\ref{wave front set 1.7}) show that
\begin{eqnarray}\label{wave front set 1.16}
\underset{t\longrightarrow 0+}{\rm limsup}\sup_{x\in\R^n}(1+|x|)^N\left|(t^{-n}M_{t^{-1}}^*\phi)*\psi(x)-\psi(x)\right|=0\,.
\end{eqnarray}
Since the differentiation commutes with the convolution, (\ref{wave front set 1.16}) implies (\ref{wave front set 1.6})  and we are done.
\end{prof}
%%

%% A different indexation for the appendix
\setcounter{thh}{0}
\setcounter{equation}{0}
\section{\textbf{A restriction of a nilpotent orbital integral}}\label{Restriction of a nilpotent orbital integral}
Let $\Wv$ be a Euclidean space, isomorphic to $\R^M$ with the usual dot product. The Lebesgue measure on any subspace of $\Wv$ will be normalized so that the volume of the unit cube is $1$. This is consistent with \cite{Hormander}.

Consider the following diagram
\begin{equation}\label{diagram1}
\begin{tikzpicture}
  % Tell it where the nodes are
  \node (A) {$\Wv$};
  \node (B) [below=of A] {$\underline\Vv$};
  \node (C) [right=of A] {$\Wv$};
  \node (D) [right=of B] {$\Vv$};
  % Tell it what arrows to draw
  \draw[-stealth] (B)-- node[left] {\small $\underline\iota$} (A);
  \draw[-stealth] (B)-- node [below] {\small $\kappa$} (D);
  \draw[-stealth] (A)-- node [above] {\small $\kappa$} (C);
  \draw[-stealth] (D)-- node [right] {\small $ \iota$} (C);
\end{tikzpicture}
\end{equation}
where $\underline \iota: \underline{\Vv}\to \Wv$ and $\iota: \Vv\to \Wv$ are submanifolds and $\kappa(\underline\Vv)=\Vv$. Then we have the following formula for the pull-backs of distributions,
\begin{equation}\label{diagram2}
\underline\iota^*f=(\kappa|_{\underline\Vv})^*\iota^*(\kappa^{-1})^* f
\end{equation}
\cite[Theorems 6.1.2 and 8.2.4]{Hormander}, where $f\in C_c^\infty(\Wv)^*$ is such that these pullbacks are well defined. 

Assume further that $\Wv$ is the direct sum of two orthogonal subspaces
\begin{equation}\label{diagram3}
\Wv=\Uv\oplus \Vv\,,
\end{equation}
that $\underline \Vv=\kappa^{-1}(\Vv)$ and that
\[
\kappa^{-1}(u+v)=\kappa^{-1}(u)+v \qquad (u\in\Uv, v\in\Vv)\,.
\]
Then 
\[
\underline \Vv=N+\Vv\,,
\]
where $N=\kappa^{-1}(0)$. Let
\[
\iota_\Uv:\Uv\to\Wv\,,\ \ \ p_\Uv:\Wv\to\Uv
\]
be the injection and the projection defined by the decomposition \eqref{diagram3}.
\begin{lemma}\label{diagram4}
Suppose $a\in C^\infty(\Uv)$ and 
\[
f(\phi)=\int_\Uv \big(\phi\circ\kappa^{-1}\big)(u) a(u)\,du \qquad (\phi\in C_c^\infty(\Wv))\,.
\]
Then
\[
\underline \iota^* f= |\det((p_\Uv\kappa^{-1}\iota_\Uv)')(0)|a(0)\delta_N\in\Ss'(N+\Vv)\,.
\]
\end{lemma}
\begin{prof}
By taking the derivative of both sides of the equation $I=\kappa\circ\kappa^{-1}$ we see that
\[
I=(\kappa\circ\kappa^{-1})\circ(\kappa^{-1})'\,.
\]
Hence,
\[
\det(\kappa')\circ\kappa^{-1}\circ \iota_\Uv=\frac{1}{\det((p_\Uv\kappa^{-1}\iota_\Uv)')}\,.
\]
Therefore, by \cite[Theorems 6.1.2]{Hormander},
\begin{align*}
(\kappa^{-1})^* f(\phi)=f(\phi\circ \kappa |\det \kappa'|)
&=\int_\Uv \phi\circ \kappa\circ\kappa^{-1}(u) |\det(\kappa'\circ\kappa^{-1})(u)| a(u)\,du\\
&=\int_\Uv \phi(u) |\det((p_\Uv\kappa^{-1}\iota_\Uv)')(u)| a(u)\,du
\end{align*}
and we deduce from \cite[Example 8.2.8]{Hormander} that
\[
\iota^*(\kappa^{-1})^* f(\phi)=|\det((p_\Uv\kappa^{-1}\iota_\Uv)')(0)| a(0)\phi(0)\,.
\]
Now the claim follows from \eqref{diagram2}.
\end{prof}

From now on we specialize to $\Wv=M_{2m,n}(\R)$ with $m\leq n$. Let $\Oo\subseteq \Wv$ denote the $\Sp_{2m}(\R)\times\Og_n$ - orbit through
\[
N=\begin{pmatrix}
I_m & 0\\
0 & 0
\end{pmatrix}\in \Wv\,.
\]
Denote by $\mathcal{H}_m(\R)\subseteq M_m(\R)$ the subspace of the symmetric matrices.
\begin{lemma}\label{diagram5}
The following formula
\[
f(\phi)=\int_{\mathcal{H}_m(\R)}\int_{M_{m,n}(\R)}\phi
\begin{pmatrix}
X\\
CX
\end{pmatrix}
|\det(XX^t)|^{\frac{m+1-n}{2}}\,dX\,dC
\]
defines an invariant measure $f\in\Ss'(\Wv)$ on the orbit $\Oo$.
\end{lemma}
\begin{prof}
Since for $g\in\GL_m(\R)$ and $B,C\in SM_m(\R)$,
\[
\begin{pmatrix}
I_m & 0\\
C & I_m
\end{pmatrix}
\begin{pmatrix}
g & 0\\
0 & (g^t)^{-1}
\end{pmatrix}
\begin{pmatrix}
I_m & B\\
0 & I_m
\end{pmatrix}
\begin{pmatrix}
I_m & 0\\
0 & 0
\end{pmatrix}
=
\begin{pmatrix}
g & 0\\
Cg & 0
\end{pmatrix}
\]
we see that
\[
\Oo=\left\{
\begin{pmatrix}
X\\
CX
\end{pmatrix};\ X\in M_{m,n}(\R)\,, \ \text{rank}(X)=m\,, \ C\in 
\mathcal{H}_m(\R)
\right\}\,.
\]
Furthermore the elements 
\[
\begin{pmatrix}
I_m & 0\\
C & I_m
\end{pmatrix}\,,\ \ \ 
\begin{pmatrix}
g & 0\\
0 & (g^t)^{-1}
\end{pmatrix}\,, \ \ \ 
\begin{pmatrix}
0 & I_m\\
-I_m & 0
\end{pmatrix}
\]
generate $\Sp_{2m}(\R)$ and it is easy to check that $f$ is invariant under the action of these elements, assuming  the following two formulas:
\begin{align*}
\int_{\mathcal{H}_m(\R)}\psi(gCg^t)\,dC&=|\det g|^{m+1}\int_{\mathcal{H}_m(\R)}\psi(C)\,dC\,,\\
\int_{\mathcal{H}_m(\R)}\psi(C^{-1})\,dC&=\int_{\mathcal{H}_m(\R)}\psi(C)|\det C|^{-m-1}\,dC\,.
\end{align*}
\end{prof}

The space tangent to $\Oo$ at $N$ may be identified with 
\[
\Uv=\left\{
\begin{pmatrix}
u_{1,1} & u_{1,2}\\
B & 0
\end{pmatrix}
;\ 
u_{1,1}\in M_m(\R)\,,\ u_{1,2}\in M_{m, n-m}(\R)\,,\ B\in 
\mathcal{H}_m(\R)\right\}\,.
\]
Then the orthogonal complement is equal to
\[
\Vv=\left\{
\begin{pmatrix}
0 & 0\\
D & u_{2,2}
\end{pmatrix}
;\ 
D=-D^t\in M_m(\R)\,,\ u_{2,2}\in M_{m, n-m}(\R)\right\}\,.
\]
Set $\underline \Vv=N+\Vv$. Then we have the inclusion $\underline\iota: \underline{\Vv}\to \Wv$.
\begin{lemma}\label{diagram6}
Let $f$ be as in Lemma \ref{diagram5}. Then
\[
\underline \iota ^* f=\delta_N\in\Ss'(N+\Vv)\,.
\]
\end{lemma}
\begin{prof}
First we rewrite $f$ as an integral over $\Uv$. Let
\[
N_1=(I_m\ 0)\in M_{m,n}(\R)\,.
\]
Then
\[
f(\phi)=\int_{\Uv}\phi
\begin{pmatrix}
u_1+N_1\\
B(u_1+N_1)
\end{pmatrix}
|\det (u_1+N_1)(u_1+N_1)^t|^{\frac{m+1-n}{2}}\,du\,dB\,,
\]
where 
\[
u=
\begin{pmatrix}
u_{1,1} & u_{1,2}\\
B & 0
\end{pmatrix}\,,\ \ \ u_1=(u_{1,1} \ u_{1,2})\,.
\]
Next we introduce the diffeomorphism
\[
\kappa^{-1}(u+v)=
\begin{pmatrix}
u_1+N_1\\
B(u_1+N_1)
\end{pmatrix}+v
\qquad (u\in\Uv\,,\ v\in\Vv)\,.
\]
Then
\[
p_\Uv \kappa^{-1}\iota_\Uv(u)=
\begin{pmatrix}
u_{1,1}+N_1 & u_{1,2}\\
\frac{1}{2}(B(u_{1,1}+N_1)+(u_{1,1}+N_1)^tB) &0
\end{pmatrix}\,.
\]
Hence
\[
(p_\Uv \kappa^{-1}\iota_\Uv)'(0)(\Delta u)=
\begin{pmatrix}
\Delta u_{1,1} & \Delta u_{1,2}\\
\Delta B &0
\end{pmatrix}
\]
and consequently
\[
\det((p_\Uv \kappa^{-1}\iota_\Uv)'(0))=1\,.
\]
Since
\[
|\det (u_1+N_1)(u_1+N_1)^t|^{\frac{m+1-n}{2}}|_{u=0}=1\,,
\]
the claim follows from Lemma \ref{diagram4}.
\end{prof}

\begin{lemma}\label{diagram7}
Suppose $m\leq n$. For $\psi\in\Ss(M_{m,n}(\R))^{\Og_n}$
\[
\int_{M_{m,n}(\R)} \psi(X)\,dX=|S^{n-1}| \int_{M_{m,n-1}(\R)}|\det(XX^t)|^{\frac{1}{2}} \psi|_{M_{m,n-1}}(X)\,dX\,.
\]
\end{lemma}
\begin{prof}
By working in spherical coordinates of decreasing dimensions on the rows of $X$, we see that the left-hand side is equal to
\begin{multline}\label{diagram9}
\int_{\R^{\frac{m(m-1)}{2}}}\int_{(\R^+)^m}\int_{S^{n-1}} \cdots\int_{S^{n-m}}
\psi
\left(
\scriptsize{%
\begin{array}{ccccc|ccc}
r_1\sigma_{1,1} & r_1\sigma_{1,2} & \cdots & \cdots & r_1\sigma_{1,m} & r_1\sigma_{1,m+1} & \cdots& r_1\sigma_{1,n}\\
\cline{1-1}
\multicolumn{1}{c|}{x_{2,1}} & r_2\sigma_{2,2} & \cdots & \cdots & r_2\sigma_{2,m} & r_2\sigma_{2,m+1 } & \cdots& r_2\sigma_{2,n}\\
\cline{2-2}
x_{3,1} & \multicolumn{1}{c|}{x_{3,2}} & r_3\sigma_{3,3} & \cdots & r_3\sigma_{3,m} & r_3\sigma_{3,m+1} & \cdots & r_3\sigma_{3,n}\\
\hhline{*{2}{~}-*{4}{~}} 
\vdots & \vdots  & \ddots & \ddots & \vdots & \vdots & & \vdots\\
\hhline{*{3}{~}-*{3}{~}} 
x_{m,1} & x_{m,2} & \cdots& \multicolumn{1}{c|}{x_{m,m-1}} & r_m\sigma_{m,m} &  r_m\sigma_{m,m+1} & \cdots& r_m\sigma_{m,n}
\end{array}
}
\right)\\[.2em]
\times d\sigma_m \cdots d\sigma_1\,r_1^{n-1}r_2^{n-2}\cdots\,r_m^{n-m}\,dr_m\cdots\,dr_2\,dr_1\,dx_{2,1}\cdots dx_{m,m-1}\,,
\end{multline}
where 
\begin{align*}
\sigma_1&=(\sigma_{1,1}, \sigma_{1,2}, \dots, \sigma_{1,n})\in S^{n-1}\,,\\
\sigma_2&=(\sigma_{2,2}, \sigma_{2,3}, \dots, \sigma_{2,n})\in S^{n-2}\,,\\
&\;\,\vdots \\
\sigma_m&=(\sigma_{m,m}, \sigma_{m,m+1}, \dots, \sigma_{m,n})\in S^{n-m}\,.
\end{align*}
The $\Og_n$-invariance implies that \eqref{diagram9} is equal to
\begin{align*}
&\int_{\R^{\frac{m(m-1)}{2}}}\int_{(\R^+)^m} |S^{n-1}|\cdots |S^{n-m}|
\psi
\left(
{\footnotesize
\begin{array}{ccccc|ccc}
r_1 & 0 & \cdots & \cdots & 0 & 0 & \cdots & 0\\
x_{2,1} & r_2 & 0 & \cdots  & 0 & 0 & \cdots & 0\\
\vdots & \ddots &  \ddots & \ddots  &\vdots & \vdots & &\vdots \\
\vdots & & \ddots &  \ddots  & 0 & \vdots & &\vdots \\
x_{m,1} & \dots  & \dots & x_{m,m-1} & r_m & 0 & \cdots & 0
\end{array}}
\right)\\
&\hskip 2cm\times\,r_1^{n-1}r_2^{n-2}\cdots r_m^{n-m}\,dr_m \cdots dr_2\,dr_1\,dx_{2,1} \cdots dx_{m,m-1}\\
&\qquad=|S^{n-1}|\int_{\R^{\frac{m(m-1)}{2}}}\int_{(\R^+)^m}|S^{n-2}|\cdots |S^{n-m}|
\psi
\left(
{\footnotesize
\begin{array}{cccccccc|c}
r_1 & 0 & \cdots & \cdots & 0 & 0 & \cdots & 0 & 0\\
x_{2,1} & r_2 & 0 & \cdots  & 0 & 0 & \cdots & 0 & 0\\
\vdots & \ddots &  \ddots & \ddots  &\vdots & \vdots & &\vdots &\vdots \\
\vdots & & \ddots &  \ddots  & 0 & \vdots & &\vdots &\vdots\\
x_{m,1} & \dots  & \dots & x_{m,m-1} & r_m & 0 & \cdots & 0 & 0 
\end{array}}
\right)\\
&\hskip 2cm\times (r_1r_2 \cdots r_m)\,\,r_1^{n-2}r_2^{n-3} \cdots \,r_m^{n-1-m}\,dr_m\cdots \,dr_2\,dr_1\,dx_{2,1}\cdots dx_{m,m-1}\\
&\qquad=|S^{n-1}|\int_{M_{m,n-1}(\R)} \psi|_{M_{m,n-1}(\R)}(X)|\det(XX^t)|^{\frac{1}{2}}\,dX\,.
\end{align*}
For the last equality, we consider spherical coordinates, as before, but on the first $n-1$ columns only, noticing that for 
$$
X=\left(\begin{array}{c|c}
T & 0
\end{array}\right)
=
\left(
\begin{array}{ccccc|ccc}
r_1 & 0 & \cdots & \cdots & 0 & 0 & \cdots & 0 \\
x_{2,1} & r_2 & 0 & \cdots  & 0 & 0 & \cdots & 0 \\
\vdots & \ddots &  \ddots & \ddots  &\vdots & \vdots & &\vdots\\
\vdots & & \ddots &  \ddots  & 0 & \vdots & &\vdots \\
x_{m,1} & \dots  & \dots & x_{m,m-1} & r_m & 0 & \cdots & 0  
\end{array}
\right)\in M_{m,n-1}(\R)\,,
$$
we have 
$$
XX^t=\left(\begin{array}{c|c}
T & 0
\end{array}\right)
\left(\begin{array}{c}
T^t\\
\cline{1-1}
0
\end{array}\right)
=TT^t\,.
$$
Hence 
$$
\det(XX^t)=\det(TT^t)=\det(T)^2=(r_1r_2\cdots r_m)^2\,.
$$
\end{prof}

\begin{corollary}\label{diagram10}
Let us denote the measure $f\in \Ss'(M_{m,n})$ defined in Lemma \ref{diagram5} by $f_n$ and assume $n>m$. 
Then for $\phi\in\Ss(M_{m,n}(\R))^{\Og_n}$
\[
f_n(\phi)=|S^{n-1}|f_{n-1}(\phi|_{M_{m,n-1}(\R)})\,,
\]
where
\[
\phi|_{M_{m,n-1}(\R)}(X)=\phi(\!\!\begin{array}{c|c}
X & 0
\end{array}
) \qquad(X\in M_{m,n-1}(\R))\,.
\]
\end{corollary}
\begin{prof}
This is clear from Lemmas \ref{diagram5} and \ref{diagram9}.
\end{prof}

\bigskip

\subsection*{Acknowledgement} {\rm We thank the referee for careful reading, insightful remarks and questions. }

\bibliographystyle{alpha}
%\addcontentsline{toc}{section}{References}

\end{document}